\theoremstyle{plain}
\newtheorem{theorem}{Theorem}[section]
\newtheorem{lemma}[theorem]{Lemma}
\theoremstyle{remark}
\numberwithin{equation}{section}
\newcommand{\tht}{\theta}
\newcommand{\Om}{\Omega}
\newcommand{\om}{\omega}
\newcommand{\e}{\varepsilon}
\newcommand{\g}{\gamma}
\renewcommand{\d}{\delta}
\renewcommand{\b}{\beta}
\renewcommand{\a}{\alpha}
\newcommand{\p}{\partial}
\newcommand{\D}{\Delta}
 \newcommand{\E}{\mbox{\rm e}}
\newcommand{\vp}{\varphi}
\newcommand{\vk}{\varkappa}
\newcommand{\z}{\zeta}
\newcommand{\Hinf}{W_\infty}
\newcommand{\Ho}{\mathring{W}_2}
\newcommand{\di}{\,d}
\newcommand{\iu}{\mathrm{i}}
\newcommand{\Ups}{\Upsilon}
\newcommand{\la}{\langle}
\newcommand{\ra}{\rangle}
\newcommand{\ope}{\mathcal{H}^\e}
\newcommand{\foe}{\mathfrak{h}^\e}
\newcommand{\opt}{\mathcal{H}^\tht}
\def\opo#1{\mathcal{H}^0_{\mathrm{#1}}}
\def\foo#1{\mathfrak{h}^0_{\mathrm{#1}}}
\def\topo#1{\widetilde{\mathcal{H}}^0_{\mathrm{#1}}}
\def\tfoo#1{\widetilde{\mathfrak{h}}^0_{\mathrm{#1}}}
\def\tu{\widetilde{u}^0}
\def\tv{\widetilde{v}^\e}
\def\fae{\mathfrak{a}^\e}
\def\fao{\mathfrak{a}}
\def\bl{\mathrm{in}}
\def\ex{\mathrm{ex}}
\def\wt#1{\widetilde{#1}}
\def\wh#1{\widehat{#1}}
\DeclareMathOperator{\spec}{\sigma}
\DeclareMathOperator{\supp}{supp}
\DeclareMathOperator{\Dom}{\mathfrak{D}}
\DeclareMathOperator{\sgn}{sgn}
\DeclareMathOperator{\RE}{Re}
\newcounter{assumption}
\begin{document}

\allowdisplaybreaks

\title{\textbf{Homogenization and norm resolvent convergence for elliptic operators in a strip perforated along a curve}}
\author{Denis Borisov\,$^a$, Giuseppe Cardone$^b$, Tiziana
Durante$^c$}
\date{\small
\begin{center}
\begin{quote}
\begin{enumerate}
{\it
\item[$a)$] Institute of Mathematics with computer center, Ufa Scientific Center, Russian Academy of Sciences, Chernyshevsky str. 112, Ufa, 450008,
 \&
Bashkir State Pedagogical University, October St.~3a,
Ufa, 450000,  Russian Federation; \texttt{borisovdi@yandex.ru}
\item[$b)$]
University of Sannio, Department of Engineering, Corso
Garibaldi, 107, 82100 Benevento, Italy;
\texttt{giuseppe.cardone@unisannio.it}
\item[$c)$]
University of Salerno, Department of Information and Electrical Engineering and Applied Mathematics, Via Ponte Don Melillo, 1, 84084, Fisciano (SA), Italy;  \texttt{tdurante@unisa.it}}
\end{enumerate}
\end{quote}
\end{center}}

\maketitle

\begin{abstract}
We consider an infinite planar straight  strip perforated by small holes along a curve. In such domain, we consider a general second order elliptic operator subject to classical boundary conditions on the holes. Assuming that the perforation is non-periodic and satisfies rather weak assumptions, we describe all possible homogenized problems. Our main result is the norm resolvent convergence of the perturbed operator to a homogenized one in various operator norms and the estimates for the rate of convergence. On the basis of the norm resolvent  convergence, we prove the convergence of the spectrum.
\end{abstract}

\begin{quote}
MSC: 35B27, 35P05
\\

Keywords: perforation, elliptic operator, unbounded domain, homogenization, norm resolvent convergence
\end{quote}

\section{Introduction}

Problems in perforated domains is one of the classical objects in the modern homogenization theory. It is impossible to cite all the works in this field and we mention only the books \cite{ZKO}, \cite{MCh}, \cite{MCh2}, \cite{OSHY}, \cite{SP}, and some recent papers \cite{art6}, \cite{CNP}--
\cite{CDG},
\cite{art9}
--\cite{art3}, \cite{art1}--
\cite{art5},  see also the references therein. One of the considered models is a perforation along a curve or a manifold. Let us describe briefly a typical formulation of boundary value problems in domains with such perforations.

Given a bounded or an unbounded domain, we choose a manifold of codimension one in this domain. Along this manifold, a perforation is made by small holes. The distances between the holes are assumed to be small. In this perforated domain an elliptic boundary value problem is considered with one of the classical boundary condition on the boundaries of the holes. The main aim is to study the behavior of the solutions to these problems as the sizes of the holes and the distances between them tend to zero.

The first question addressed in studying the above problems was the description of the homogenized problems whose solution are the limits (in some sense) for the solutions of the original problems. This issue was studied in
\cite{art6}, \cite{art9}, \cite{art7},   \cite{art2}, \cite{art3}, \cite{art1}--
\cite{art5}, \cite{MCh2},   see also the references therein. In all these works except \cite{MCh2} the operator was either the Laplacian or the Laplacian plus a constant. In \cite{MCh2} a general elliptic operator was considered. In this book two problems were studied. The first of them is the Dirichlet problem for a general elliptic equation of even order. The second problem is the Neumann problem for a second order elliptic equation. The geometry of the holes and their distribution were arbitrary.
In \cite{art6} there was considered the Poisson equation in a bounded two-dimensional domain with a periodic perforation along the boundary. In \cite{art2} a variational inequality for the Laplacian was considered in an arbitrary domain. The perforation was periodic along a given manifold and it was assumed that the linear size of the holes is much smaller than the distances between the holes. At the boundary of the holes a nonlinear Robin type boundary condition was imposed. In \cite{art3}, \cite{art4} a similar boundary value problems for the Poisson equation (not for a variational inequality) were studied.  Papers  \cite{art9}, \cite{art7}, \cite{art1}, \cite{art5} were devoted to a non-periodic perforation. In \cite{art1}, \cite{art5} the Poisson equation in a bounded multi-dimensional domain was studied. The sizes of the holes and the  distances between them were of different smallness order.  The boundaries of the holes were subject to one of the classical boundary conditions. In \cite{art7} the Dirichlet condition on the boundary of the holes was imposed.
There were considered eigenvalue equations or equations for the resolvent of the Laplacian. The perforation was along the boundary. Similar model was treated also in \cite{art9}, but here the holes were placed randomly, namely, by means of an ergodic dynamical system. The main result of \cite{art6}, \cite{art9}, \cite{art7},   \cite{art2}, \cite{art3}, \cite{art1}--
\cite{art5}, \cite{MCh2} was the classification of the homogenized problems depending on the geometry of the holes, their distribution and the condition on the boundary of the holes. The convergence of the perturbed solutions to the homogenized ones was proven. This convergence was established in a weak or a strong sense. Namely, a typical result stated that a perturbed solution converges to a homogenized one weakly or strongly in $W_2^1$ or strongly in $L_2$ for each fixed right hand side. For some models the estimates for the rate of convergence were established. Being reformulated in terms of the resolvents, the above results say that the resolvents of the original operators in perforated domains converge to the resolvent of the homogenized operators and the convergence is valid in a weak or strong resolvent sense.

In \cite{27}--
\cite{25a}, there were studied the eigenvalue problems for the Laplacian in a domain perforated along a part of the boundary. The sizes of the holes and the distances between them were of the same smallness order. The condition on the boundaries of the holes was the Dirichlet one. The main result of these papers is the asymptotic expansions for the eigenvalues and the eigenfunctions.

Approximately a decade ago a new direction in the homogenization theory was initiated. It was found for the operators with fast periodically oscillating coefficients that their resolvents converge to the resolvents of the homogenized operators in the norm resolvent sense. This was a much stronger result in comparison with known classical results stating just a weak or a strong resolvent convergence. The results on norm resolvent convergence were obtained by M.Sh.~Birman, T.A.~Suslina \cite{Bi03}, \cite{BS2}, \cite{BS5}, \cite{Su2}, \cite{Su1}, \cite{SuKh}, V.V.~Zhikov and S.E.~Pastukhova \cite{CPZh}, \cite{Pas}, \cite{PT}, \cite{Zh3}--
\cite{Zh4}, G. Griso \cite{Gr1}--\cite{Gr4}, and by C.E.~Kenig, F.~Lin, Z.~Shen \cite{KLS1}, \cite{KLS2}; see also other papers by these authors. Moreover, in the above cited works, the authors succeeded to establish sharp estimates for the rates of convergence in the sense of various operator norms.

In view of the above described results, a natural question appeared: whether a similar norm resolvent convergence is valid for other types of the perturbations in the homogenization theory? This issue was studied recently for certain perturbations in the boundary homogenization.

In \cite{PMA}, \cite{AHP}, \cite{CRAS}, \cite{ZAMP}, \cite{JPA09} problems with frequent alternation of boundary conditions were treated. The norm resolvent convergence was proven for all possible homogenized problems as well as for both periodic and non-periodic alternations. The estimates for the rate of   were obtained. In periodic cases certain asymptotic expansions for the spectra of perturbed operators were constructed.

In \cite{BCFP}, \cite{Na}, \cite[Ch. I\!I\!I, Sec. 4]{OIS} the norm resolvent convergence for problems with a fast periodically oscillating boundary was proven. The most general results was obtained in \cite{BCFP}. Namely, various geometries of oscillations as well as various boundary conditions on the oscillating boundary were considered. There were obtained estimates for the rate of norm resolvent convergence in the sense of various operator norms.

The norm resolvent convergence for  periodic perforations was studied in \cite{MCh2}, \cite{Pas}. In \cite{MCh2} the whole of a domain was perforated. The operator was described by the Helmholtz equation; on the boundaries of the holes the Dirichlet condition was imposed. The authors  treated the case when the holes disappeared under the homogenization and made no influence for the homogenized operators. The norm resolvent convergence was proven; no estimates for the rate of convergence were found. In   \cite{Pas}, an elliptic operator in
a perforated domain was studied. Here again whole of the domain was perforated. It was assumed that the sizes of the holes and the distances between them are of the same order of smallness. On the boundaries of the holes the Neumann condition was imposed. The norm resolvent convergence and the estimates for the rate of convergence were established.

One more interesting paper devoted to norm resolvent convergence is  \cite{ZhMIAN}. Here the perturbation was defined by rescaling an abstract periodic measure. The main result is the description of the homogenized operator, the proof of the norm resolvent convergence, and the estimates for the rate of convergence. A general model of \cite{ZhMIAN} covered various perturbations including periodic perforation of the whole of a domain
provided the sizes of the holes and the distances between them are of the same smallness order.

In the present paper we consider a general second order elliptic operator in an infinite planar strip perforated along a curve which is either infinite or finite and closed.  The sizes of the holes and the distance between them are described by means of two small parameters. The perforation is quite general and no periodicity is assumed. Namely, both the shapes and the distribution of the holes can be rather arbitrary. On the boundary of the holes we impose one of the classical boundary conditions, i.e., Dirichlet or Neumann or Robin condition. Boundaries of different holes can be subject to different types of boundary conditions.  To the best of authors' knowledge, such mixtures of boundary conditions were not considered before.

One of the possible physical interpretations of our operator comes from the waveguide theory. Namely, our operator describes a quantum particle in a waveguide modeled by an infinite strip, and since the coefficients of the operator are variable, the waveguide is not isotropic. The perforation can be interpreted as a series of small defects distributed along a given line, while the conditions on the  boundaries of the holes impose certain regime, for instance, the Dirichlet condition describes a wall and the particle can not pass through such boundary. Then  the homogenization  describes the effective behavior of our model once the perforation becomes finer, while the type of resolvent convergence characterizes in which sense the perturbed model is close to the effective one.

Our first main result describes the homogenized problems depending on the geometry, sizes, and distribution of the holes as well as of the  conditions on the boundary of the holes. The differential expression for the homogenized operator is the same as for the original operator, but with   Dirichlet  condition or delta-interaction or no condition on the reference curve along which the perforation is made. Our second main result is the the norm resolvent convergence of the perturbed operator to the homogenized one and the estimates for the rates of convergence. In all cases except one the operator norm is that of the operators from $L_2$ into $W_2^1$, while in the exceptional case it is from $L_2$ into $L_2$. Nevertheless, in the latter case we show that by employing a special boundary corrector one can replace the norm by that of the operators acting from $L_2$ into $W_2^1$. Such kind of results on norm resolvent convergence are completely new for the domains perforated periodically along curves or manifolds, especially in view of the fact that  we succeeded to study the general non-periodic perforation with arbitrary boundary conditions.

Our technique is based on the variational formulations of the equations for the perturbed and the homogenized operators. We   use no smoothing operator like, for instance, in the above cited papers on the operators with fast oscillating coefficients. Instead of this, we write the integral identity for the difference of the perturbed and homogenized resolvents and estimate then the terms coming from the boundary conditions. It requires certain accurate estimates for various boundary integrals over holes and over the reference curve. The main difference of our technique with that in the previous works is the assumptions for the perforation. In previous works \cite{art6}, \cite{art9}, \cite{art7}, 
\cite{art2}, \cite{art3}, \cite{art1}--
\cite{art5}, the main assumption was the existence of an operator of continuation the holes for the functions defined outside as well as uniform estimates for this operator. In our work, we assume the solvability of a certain fixed boundary value problems for the divergence operator in a neighborhood of the holes. We believe that our assumptions are not worse than the existence of the continuation operator since we require just a solvability of certain boundary value problem while the existence of the continuation operator means the possibility to extend \emph{each} function in a given Sobolev space.

Concluding the introduction, we describe briefly the structure of the paper. In the next section we give the precise description of the problem, formulate the main results, and discuss them. In the third section we collect auxiliary lemmata required for the proof of the main results. The forth, fifth, and sixth sections are devoted to the study of the norm resolvent convergence in various cases. In the seventh section we prove the convergence of the spectrum. In the last eighth section we discuss the sharpness of our estimates for the rate of convergence.

\section{Problem and main results}

Let $x=(x_1,x_2)$ be the Cartesian coordinates in $\mathds{R}^2$,
 $\Om:=\{x:
0<x_2<d\}$ be a horizontal strip of width $d>0$. By $\g$ we denote a curve in $\Om$ separated from $\p\Om$ by a fixed distance. Curve $\g$ is supposed to be  $C^3$-smooth and to have no self-intersections.
We consider two cases assuming that $\g$ is either an infinite curve or it is a finite closed curve. By $s$ we denote the arc length of $\g$, $s\in[-s_*,s_*]$, where $s_*$ is either finite or $s_*=+\infty$. If curve $\g$ is finite, we identify points $s=-s_*$ and $s=s_*$. By $\varrho=\varrho(s)$ we denote the vector function describing the curve $\g$. Since curve $\g$ is $C^3$-smooth, then $\varrho\in C^3[-s_*,s_*]$; for an infinite curve by $[-s_*,s_*]$ we mean $\mathds{R}$. The above assumptions for $\g$ yield that this curve partitions domain $\Om$ into two disjoint subdomains. The upper or exterior one is denoted by $\Om_+$ and the lower or interior subdomain is $\Om_-$. By $B_r(a)$ we denote the ball in $\mathds{R}^2$ of radius $r$ centered at $a$.

Let $\mathds{M}^\e\subseteq \mathds{Z}$ be some set, and  $s_k^\e\in [-s_*, s_*]$, $k\in\mathds{M}^\e$, be a set of points satisfying $s_k^\e<s_{k+1}^\e$. By $\om_k$,
$k\in\mathds{M}^\e$, we indicate a set of bounded domains in
$\mathds{R}^2$ having $C^2$-boundaries. We stress that these domains are not supposed to be simply connected. Denoting by $\e$ a small positive parameter, we define
\begin{align*}
&\tht^\e:=\tht^\e_0\cup\tht^\e_1,\quad
\tht_i^\e:=\bigcup\limits_{k\in\mathds{M}_i}\om_k^\e,\quad
i=0,1,
\quad \om_k^\e:=\{x: \e^{-1}\eta^{-1}(\e)(x-y_k^\e)\in\om_k\}, \quad y_k^\e:=\varrho(s_k^\e),
\end{align*}
where
$\mathds{M}_0^\e\cap\mathds{M}_1^\e=\emptyset$,
$\mathds{M}_0^\e\cup\mathds{M}_1^\e=\mathds{M}^\e$,
and $\eta=\eta(\e)$ is a some function satisfying the inequality $0<\eta(\e)\leqslant 1$.
We make the following assumptions.

\begin{enumerate}\def\theenumi{(A\arabic{enumi})} \setcounter{enumi}{0}

\item\label{B1} There exist fixed numbers $0<R_1<R_2$, $b>1$, $L>0$ and points $x^k\in\om_k$, $k\in\mathds{M}^\e$, such that
\begin{align*}
& B_{R_1}(x^k)\subset
\om_k\subset B_{R_2}(0),\quad |\p\om_k|\leqslant L \quad\text{for each}\quad k\in\mathds{M}^\e,
\\
&B_{b R_2\e}(y_k^\e)\cap B_{b R_2\e}(y_i^\e)=\emptyset\quad \text{for each} \quad i,k\in\mathds{M}^\e, \quad i\not=k,
\end{align*}
and for all sufficiently small $\e$.

\item \label{B2}
 For $b$ and $R_2$ in \ref{B1} and $k\in\mathds{M}^\e$ there exists a generalized  solution $X_k: B_{b_*R_2}(0)\setminus\om_k\mapsto \mathds{R}^2$, $b_*:=(b+1)/2$, to the boundary value problem
    \begin{equation}
    \begin{gathered}
    \mathrm{div}\, X_k=0\quad \text{in}\quad B_{b_*R_2}(0)\setminus\om_k,
    \\
    X_k\cdot\nu=-1\quad \text{on}\quad \p\om_k, \quad X_k\cdot\nu=\vp_k\quad \text{on}\quad \p B_{b_*R_2}(0),
    \end{gathered}\label{2.1a}
    \end{equation}
belonging to $L_\infty(B_{b_*R_2}(0)\setminus\om_k)$ and bounded in the sense of this space uniformly in  $k\in\mathds{M}^\e$. Here $\nu$ is the outward normal to $\p B_{b_*R_2}(0)$ and to $\p\om_k$, while $\vp_k$ is a some function in $L_\infty(\p B_{b_*R_2}(0))$ satisfying
\begin{equation}\label{2.26}
\int\limits_{\p B_{b_*R_2}(0)} \vp_k\di s=|\p\om_k|.
\end{equation}
\end{enumerate}

By $A_{ij}=A_{ij}(x)$, $A_i=A_i(x)$, $A_0=A_0(x)$ we denote   functions satisfying the conditions
\begin{equation}\label{2.3}
\begin{aligned}
&A_{ij}, A_i\in \Hinf^1(\Om), \quad i,j=1,2, \quad A_0\in L_\infty(\Om), \quad A_{ji}=A_{ji},
\\
&\sum\limits_{i,j=1}^{2} A_{ij}z_i z_j  \geqslant c_2|\xi|^2,\quad x\in\Om,\quad z=(z_1,z_2)\in \mathds{R}^2,
\end{aligned}
\end{equation}
where $c_2$ is a positive constant independent of $x$ and $\xi$, and $A_{ij}$, $A_0$ are real-valued.

In the vicinity of $\g$ we introduce local coordinates $(s,\tau)$, where $\tau$ is the distance to a point measured along the normal $\nu^0$ to $\g$ which is inward for $\Om_-$, and $s$, we remind, is the arc length of $\g$. Since the curvature of $\g$ is uniformly bounded, the coordinates $(s,\tau)$ are well-defined for $|\tau|\leqslant \tau_0$, $s\in \mathds{R}$, where $\tau_0$ is a sufficiently small fixed positive number.

We denote by $\Om^\e:=\Om\setminus\tht^\e$ our perforated domain, cf. Figure 1. In this
paper we study a singularly perturbed  operator depending on
$\e$ which we denote as $\ope$.  It is introduced by the differential expression
\begin{equation}\label{2.4}
-\sum\limits_{i,j=1}^{2} \frac{\p\hphantom{x}}{\p x_i} A_{ij} \frac{\p\hphantom{x}}{\p x_j} + \sum\limits_{j=1}^{2} A_j\frac{\p\hphantom{x}}{\p x_j}-\frac{\p\hphantom{x}}{\p x_j}\overline{A_j} + A_0
\end{equation}
in  $\Om^\e$ subject to the Dirichlet condition on $\p\Om\cup\p\tht^\e_0$ and to the Robin condition
\begin{equation*}
\left(\frac{\p\hphantom{N}}{\p N^\e}+a\right)u=0\quad\text{on}\quad \p\tht^\e_1,\qquad
\frac{\p\hphantom{N}}{\p N^\e}:=\sum\limits_{i,j=1}^{2} A_{ij}\nu_i^\e \frac{\p\hphantom{x}}{\p x_j}+\sum\limits_{j=1}^{2} \overline{A}_j\nu_j^\e,
\end{equation*}
where $\nu^\e=(\nu^\e_1,\nu^\e_2)$ is the inward normal to $\p\tht_1^\e$, $a=a(x)$ is a function defined for $|\tau|<\tau_0$ and
$a\in\Hinf^1(\{x: |\tau|<\tau_0\})$.

By $\fae$ we denote the sesquilinear form
\begin{equation}
\begin{aligned}
\fae(u,v):=&\sum\limits_{i,j=1}^{2} \left(A_{ij} \frac{\p u}{\p x_j}, \frac{\p v}{\p x_i}\right)_{L_2(\Om^\e)} + \sum\limits_{j=1}^{2} \left(A_j\frac{\p u}{\p x_j}, v\right)_{L_2(\Om^\e)}
\\
&+\sum\limits_{j=1}^{2}\left(u,A_j\frac{\p v}{\p x_j}\right)_{L_2(\Om^\e)}+(A_0 u,v)_{L_2(\Om^\e)}
\end{aligned}\label{2.4a}
\end{equation}
in $L_2(\Om^\e)$ on the domain $W_2^1(\Om^\e)$.
Rigorously we introduce operator $\ope$  as the lower-semibounded self-adjoint operator  in
$L_2(\Om^\e)$ associated with the closed lower-semibounded
symmetric sesquilinear form
$\foe(u,v):=\fae(u,v) +(au,v)_{L_2(\p\tht_1^\e)}$
in $L_2(\Om^\e)$ on $\Ho^1(\Om^\e,\p\Om\cup\p\tht^\e_0)$. Hereinafter for any domain $Q\subset \mathds{R}^2$ and any curve $S\subset Q$, by $\Ho^1(Q,S)$ we denote the subspace of $W_2^1(Q)$ consisting of the functions with zero trace on $S$, and we let $\Ho^1(Q):=\Ho^1(Q,\p Q)$. If else is not said, in what follows all the differential operators are introduced in this way, i.e., they will be self-adjoint lower semibounded operators in $L_2(\Om)$ or $L_2(\Om^\e)$ associated with closed lower-semibounded symmetric sesquilinear form. For the sake of brevity, for each operator we shall just write the differential expression with the boundary condition as well as the associated form.
\begin{figure}[t]
\centering
\begin{tabular}{cc}
\includegraphics[scale=0.3]{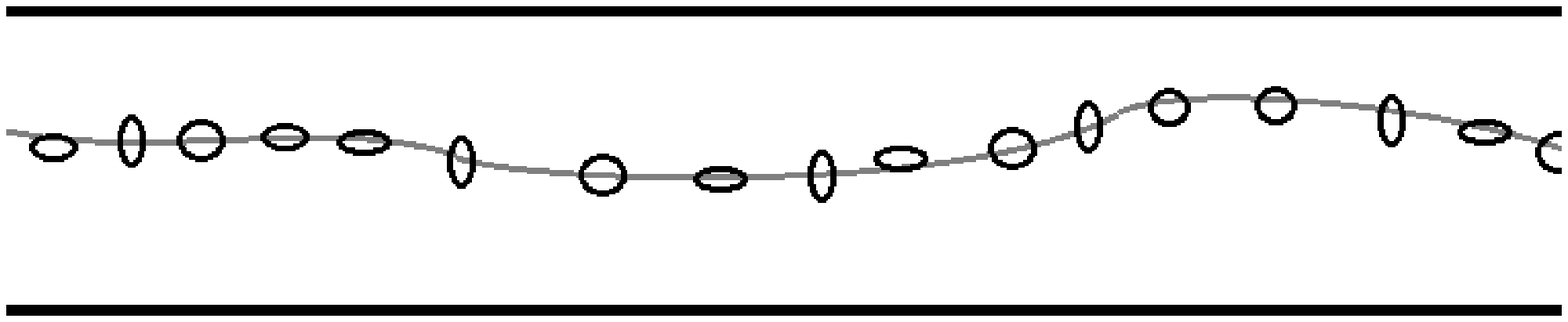} & \includegraphics[scale=0.3]{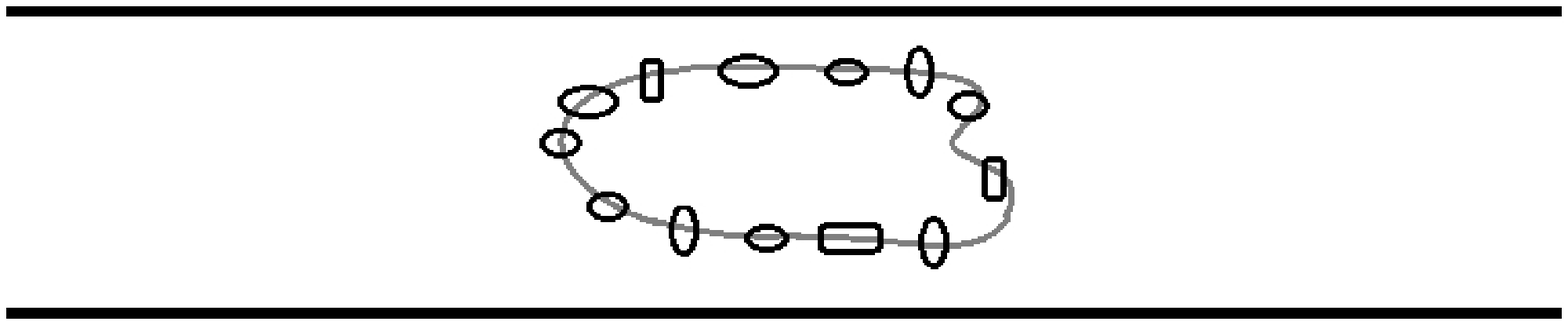}
\\
\
\\
{\small (a) Perforation along an infinite curve} & {\small (b) Perforation along a closed curve}
\end{tabular}

\caption{Perforated domain}
\end{figure}

Our main aim is to study the resolvent convergence and the spectrum's behavior of the operator $\ope$. To formulate our main results, we need additional notations.

By $\opo{D}$ we denote the operator in $L_2(\Om)$ with the differential expression (\ref{2.4}) subject to the Dirichlet condition on $\g$ and $\p\Om$. The associated form is
$\foo{D}(u,v):=\fao(u,v)$
in $L_2(\Om)$ on $\Ho^1(\Om,\p\Om\cup\g)$, where form $\fao$
is introduced by expression (\ref{2.4a}), where $\Om^\e$ is replaced by $\Om$.
By analogy with \cite[Lem. 2.2]{BorAA}, \cite[Ch. I\!V, Sec. 2.2, 2.3]{Mi}, \cite[Lem. 3.2]{BorIEOP} one can check that the domains of
operator $\opo{D}$
is given by the identity
$\Dom(\opo{D})=\Ho^1(\Om,\p\Om\cup\g)\cap W_2^2(\Om\setminus\g)$.
By $\iu$ we denote the imaginary unit, and by $\|\cdot\|_{X\to Y}$ we denote the norm of an operator acting from a Banach space $X$ to a Banach space $Y$.

Now we are ready to formulate our first main result.

\begin{theorem}\label{th2.1} Let
\begin{equation}\label{2.1ab}
\e\ln\eta(\e)\to0,\quad \e\to+0,
\end{equation}
and suppose \ref{B1}, \ref{B2}, and
\begin{enumerate}
\def\theenumi{(A\arabic{enumi})} \setcounter{enumi}{2}

\item \label{A5}
There exists a constant $R_3>b R_2$ such that
\begin{equation*}
\{x: |\tau|<\e b R_2\} \subset\bigcup\limits_{k\in\mathds{M}_0^\e} B_{R_3\e}(y_k^\e),\quad\om_k^\e\subset B_{R_3\e}(y_k^\e)  \quad \text{for each} \quad k\in\mathds{M}_0^\e.
\end{equation*}
\end{enumerate}

\noindent Then  the estimate
\begin{equation}\label{2.5a}
\|(\ope-\iu)^{-1}-(\opo{D}-\iu)^{-1}\|_{L_2(\Om)\to W_2^1(\Om^\e)}\leqslant C \e^{\frac{1}{2}}\big(|\ln\eta(\e)|^{\frac{1}{2}}+1\big)
\end{equation}
holds true, where $C$ is a positive constant independent of $\e$.
\end{theorem}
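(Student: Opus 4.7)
The plan is to compare $u^\e:=(\ope-\iu)^{-1}f$ and $u^0:=(\opo{D}-\iu)^{-1}f$ directly via their variational formulations. The structural obstruction is that $u^0\in\Ho^1(\Om,\p\Om\cup\g)\cap W_2^2(\Om\setminus\g)$ vanishes on $\g$ but not on $\p\tht_0^\e$, so $u^0-u^\e$ is not an admissible test function in $\foe$. Under \ref{A5} the Dirichlet holes lie inside the strip $\{|\tau|<\e bR_2\}$, and since $u^0|_\g=0$ a Hardy/Taylor argument in $\tau$ gives $\|u^0\|_{L_2(\{|\tau|<\e bR_2\})}\lesssim\e\|f\|_{L_2(\Om)}$ and $\|\nabla u^0\|_{L_2(\{|\tau|<\e bR_2\})}\lesssim\e^{1/2}\|f\|_{L_2(\Om)}$ (the latter using the trace of $\nabla u^0$ on $\g$). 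I will therefore introduce a boundary-layer corrector $\tv:=\chi(\tau/\e)u^0$, with $\chi\in C^\infty_0(\mathds{R})$ equal to one on $[-bR_2,bR_2]$, so that $\tv=u^0$ on $\p\tht_0^\e$, $\tv=0$ on $\p\Om$, and $\|\tv\|_{W_2^1(\Om^\e)}\leqslant C\e^{1/2}\|f\|_{L_2(\Om)}$.

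With $\psi:=u^\e-u^0+\tv\in\Ho^1(\Om^\e,\p\Om\cup\p\tht_0^\e)$ admissible, I will subtract the two defining identities
\begin{equation*}
\foe(u^\e,\psi)-\iu(u^\e,\psi)_{L_2(\Om^\e)}=(f,\psi)_{L_2(\Om^\e)},\quad \foo{D}(u^0,\psi)-\iu(u^0,\psi)_{L_2(\Om)}=(f,\psi)_{L_2(\Om)},
\end{equation*}
after rewriting the second integral as one over $\Om^\e$ plus a remainder over $\tht^\e$ of size $|\tht^\e|\lesssim\e\eta(\e)^2$. The resulting equation takes the form
\begin{equation*}
\foe(\psi,\psi)-\iu\|\psi\|_{L_2(\Om^\e)}^2=\foe(\tv,\psi)-\iu(\tv,\psi)_{L_2(\Om^\e)}+\mathcal{R}_{\bl}+\mathcal{R}_{\ex},
\end{equation*}
where $\mathcal{R}_{\bl}$ collects the bulk discrepancies over $\tht^\e$ and $\mathcal{R}_{\ex}$ is the conormal coupling $\sum_{\pm}\int_{\p\tht^\e\cap\overline{\Om_\pm}}(\p u^0/\p\nu^\e)\overline{\psi}\,ds-(au^\e,\psi)_{L_2(\p\tht_1^\e)}$ obtained by integrating $\foo{D}(u^0,\cdot)$ by parts piece by piece on $\Om_\pm\cap\Om^\e$. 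Coercivity (\ref{2.3}) then yields $c\|\psi\|_{W_2^1(\Om^\e)}^2\leqslant|\mathcal{R}_{\bl}|+|\mathcal{R}_{\ex}|+\|\tv\|_{W_2^1(\Om^\e)}\|\psi\|_{W_2^1(\Om^\e)}$.

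The delicate step is controlling the surface integrals in $\mathcal{R}_{\ex}$, and this is where assumption \ref{B2} enters. I introduce the rescaled divergence-free field $X_k^\e(x):=X_k(\e^{-1}\eta(\e)^{-1}(x-y_k^\e))$ which satisfies $X_k^\e\cdot\nu=-1$ on $\p\om_k^\e$ and is uniformly bounded. Rewriting $\int_{\p\om_k^\e}g\overline{h}\,ds=-\int_{\p\om_k^\e}g\overline{h}\,(X_k^\e\!\cdot\!\nu)\,ds$ and applying the divergence theorem in $B_{b_*R_2\e\eta(\e)}(y_k^\e)\setminus\om_k^\e$ transfers each surface integral to a volume integral of $X_k^\e\cdot\nabla(g\overline{h})$ over a thin annulus, plus a trace of $g\overline{h}$ on the outer sphere. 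Summing over $k\in\mathds{M}^\e$ via the disjointness in \ref{B1}, and estimating the outer-sphere traces by the classical two-dimensional annular inequality (the only step producing the logarithmic factor $|\ln\eta(\e)|$), I obtain a bound of the form $|\mathcal{R}_{\ex}|\leqslant C\e^{1/2}(|\ln\eta(\e)|^{1/2}+1)\|f\|_{L_2(\Om)}\|\psi\|_{W_2^1(\Om^\e)}$. Absorbing $\|\psi\|_{W_2^1(\Om^\e)}$ by Young's inequality and combining with the bounds for $\tv$ and $\mathcal{R}_{\bl}$ yields (\ref{2.5a}) after a final application of the triangle inequality. The main obstacle will be the meticulous bookkeeping: carefully splitting $\p\tht^\e=\p\tht_0^\e\cup\p\tht_1^\e$, using that $\psi=u^\e=0$ on $\p\tht_0^\e$ to kill half the boundary couplings, and extracting the $|\ln\eta(\e)|^{1/2}$ from the two-dimensional capacity estimate — condition (\ref{2.1ab}) guarantees $\e|\ln\eta(\e)|\to 0$ so that the logarithmic loss does not overwhelm the $\e^{1/2}$ gain.
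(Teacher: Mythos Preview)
Your corrector $\tv=\chi(\tau/\e)u^0$ and the function $\psi=u^\e-u^0+\tv$ coincide with what the paper calls $v^\e=u^\e-(1-\chi_1^\e)u^0$, so the setup is right. The gap is in how you invoke the variational identity for $\opo{D}$. The form $\foo{D}$ has domain $\Ho^1(\Om,\p\Om\cup\g)$, so a legitimate test function must vanish on $\g$. Your $\psi$ does not: on $\g\cap\Om^\e$ one has $u^0=0$ and $\tv=\chi(0)u^0=0$, hence $\psi|_\g=u^\e|_\g$, which is generically nonzero. Consequently the second of your two ``defining identities'' is not valid for $\psi$, and if instead you integrate by parts on $\Om_\pm\cap\Om^\e$ as you indicate, an additional jump term
\[
\int_{\g\cap\Om^\e}\Big[\frac{\p u^0}{\p N^0}\Big]_\g\overline{\psi}\,ds
\]
appears on the right-hand side. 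This term is of order $\|f\|_{L_2(\Om)}\|\psi\|_{W_2^1(\Om^\e)}$ with no smallness factor, so it cannot be absorbed and the argument stalls. Your $\mathcal{R}_{\ex}$ omits it.

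The paper's remedy is to use the cutoff a \emph{second} time: it tests $\foo{D}$ not with $v^\e$ but with $(1-\chi_1^\e)v^\e$, which vanishes identically on $\{|\tau|<R_3\e\}$ and in particular on $\g$ and on every $\om_k^\e$. Then $\foo{D}\big(u^0,(1-\chi_1^\e)v^\e\big)$ is legitimate, the integrals over $\Om$ and $\Om^\e$ agree (no bulk discrepancy over $\tht^\e$, no surface terms on $\p\tht^\e$ at all), and subtracting yields
\[
\foe(v^\e,v^\e)-\iu\|v^\e\|_{L_2(\Om^\e)}^2=(\chi_1^\e f,v^\e)_{L_2(\Om^\e)}+S^\e,
\]
where $S^\e$ collects only the commutator terms $[\nabla,\chi_1^\e]$ acting on $u^0$. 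These are supported in the thin strip $\{R_3\e<|\tau|<2R_3\e\}$ and are controlled via two ingredients: the Taylor/Hardy bound $\|u^0\|_{L_2(\{|\tau|<2R_3\e\})}\leqslant C\e^{3/2}\|f\|$ (your observation), and a Poincar\'e-type inequality for $v^\e$ itself, namely $\|v^\e\|_{L_2(\Pi^{R_3\e}\setminus\tht^\e)}\leqslant C\e(|\ln\eta|^{1/2}+1)\|\nabla v^\e\|_{L_2(\Om^\e)}$, obtained by extending $v^\e$ by zero into the Dirichlet holes and integrating radially from the small balls $B_{R_1\e\eta}(x^k+y_k^\e)\subset\om_k^\e$ guaranteed by \ref{B1}. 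This is where the logarithm enters, not through the $X_k$ fields of \ref{B2}; in fact \ref{B2} is not used directly in the proof of Theorem~\ref{th2.1}. Your plan to transfer $\int_{\p\om_k^\e}$ via the divergence theorem with $X_k^\e$ is the mechanism the paper employs for the \emph{Robin} theorems, where genuine conormal terms on $\p\tht^\e$ do survive.
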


Let $\nu^0=(\nu^0_1,\nu^0_2)$   and
\begin{equation*}
\hphantom{\Bigg(}\frac{\p\hphantom{N}}{\p N^0}:=\sum\limits_{i,j=1}^{2} A_{ij}\nu_i^0 \frac{\p\hphantom{x}}{\p x_j}.
\end{equation*}
By $[\cdot]_\g$ we indicate the jump of a function on $\g$,
$[u]_\g=u\big|_{\tau=+0}-u\big|_{\tau=-0}$.
Given a function $\beta=\beta(s)$ in $\Hinf^1(\g)$, we introduce   operator $\opo{\beta}$ with  differential expression (\ref{2.4}) subject to the boundary conditions
\begin{equation}\label{2.13}
[u]_\g=0,\quad \bigg[\frac{\p u}{\p N^0}\bigg]_\g+\beta u\big|_\g=0.
\end{equation}
The associated   form is
$\foo{\beta}(u,v):=\fao(u,v)
+(\beta u,v)_{L_2(\g)}$
in $L_2(\Om)$ on $\Ho^1(\Om)$. Again by analogy with \cite[Lem. 2.2]{BorAA}, \cite[Ch. I\!V, Sec. 2.2, 2.3]{Mi}, \cite[Lem. 3.2]{BorIEOP} one can show that
\begin{equation*}
\Dom(\opo{\beta})=\{u\in\Ho^1(\Om):  u\in W_2^2(\Om_\pm)\text{ and (\ref{2.13}) is satisfied}\}.
\end{equation*}
If $\beta=0$, instead of $\opo{0}$ we shall simply write $\opo{}$. As one can see, in this case there is no boundary condition on $\g$ and  the domain of $\opo{}$ is  $\Dom(\opo{})=\Ho^1(\Om)\cap W_2^2(\Om)$.

In the next theorem we deal with the case when the perturbed operator involves the Dirichlet condition at least on a part of $\p\tht^\e$ but in contrast to (\ref{2.1ab}), the function  $\e\ln\eta(\e)$ converges either to a non-zero constant or to infinity.

\begin{theorem}\label{th2.4}
Suppose \ref{B1}, \ref{B2}, let
\begin{equation}\label{2.12}
\frac{1}{\e\ln\eta(\e)}\to-\rho,\quad \e\to+0,
\end{equation}
and set $\mathds{M}_0^\e$ be non-empty. For
$b$ and $R_2$ in \ref{B1} and $s\in \mathds{R}$ we denote
\begin{equation*}
\alpha^\e(s):=\left\{
\begin{aligned}
&  \frac{\pi}{b R_2}, \quad  |s-s_k^\e|<b R_2\e,\quad k\in\mathds{M}^\e_0,
\\
&\hphantom{0} 0, \hphantom{000} \quad \text{otherwise}.
\end{aligned}\right.
\end{equation*}
Suppose also that

\begin{enumerate}\def\theenumi{(A\arabic{enumi})} \setcounter{enumi}{3}

\item\label{A14}
There exists a function $\alpha=\alpha(s)$ in $\Hinf^1(\g)$ and a function $\varkappa=\varkappa(\e)$, $\varkappa(\e)\to+0$, $\e\to+0$, such that for  all sufficiently small $\e$ the  estimate
\begin{equation}\label{2.17}
\sum\limits_{q\in\mathds{Z}} \frac{1}{|q|+1} \left|\int\limits_{n}^{n+\ell}
  \big(\a^\e(s)-\a(s)\big)\E^{-\frac{\iu q}{2\pi\ell}(s-n)}\di s\right|^2
\leqslant \varkappa^2(\e)
\end{equation}
is valid, where $n=-s_*$, $\ell=2 s_*$, if $\g$ is a finite curve, and $n\in \mathds{Z}$, $\ell=1$, if $\g$ is an infinite curve. In the latter case estimate (\ref{2.17}) is supposed to hold uniformly in $n$.
\end{enumerate}

\noindent Denote
\begin{equation*}
\beta:=\alpha\frac{(\rho+\mu)}{A_{11}A_{22}-A_{12}^2},\quad \beta_0:=\alpha\frac{\rho}{A_{11}A_{22}-A_{12}^2},\quad \mu(\e):=-\frac{1}{\e\ln\eta(\e)}-\rho.
\end{equation*}
Then  the estimates
\begin{align}
&\|(\ope-\iu)^{-1}-(\opo{\beta}-\iu)^{-1}\|_{L_2(\Om)\to L_2(\Om^\e)}\leqslant C\big(\e^{\frac{1}{2}}+\varkappa (\e)\big) \label{2.18a}
\\
&\|(\ope-\iu)^{-1} -(\opo{\beta_0}-\iu)^{-1} \|_{L_2(\Om)\to L_2(\Om^\e)}\leqslant C\big(\e^{\frac{1}{2}}+\varkappa (\e)+\mu(\e)\big) \label{2.19}
\end{align}
hold true, where $C$ is a positive constant independent of $\e$. There exists an explicit function $W^\e$ defined in (\ref{7.31}) such that the estimate
\begin{equation}\label{2.21}
\|(\ope-\iu)^{-1} -(1-W^\e)(\opo{\beta}-\iu)^{-1} \|_{L_2(\Om)\to W_2^1(\Om^\e)}\leqslant C\big(\e^{\frac{1}{2}}+\varkappa(\e)(\rho+\mu(\e))\big)
\end{equation}
is valid, where $C$ is a positive constant independent of $\e$. If $\rho=0$, the estimate
\begin{equation}\label{2.20}
\|(\ope-\iu)^{-1} -(\opo{}-\iu)^{-1} \|_{L_2(\Om)\to W_2^1(\Om^\e)}\leqslant C\big(\e^{\frac{1}{2}}+\mu^{\frac{1}{2}}(\e)\big)
\end{equation}
holds true, where $C$ is a positive constant independent of $\e$.
\end{theorem}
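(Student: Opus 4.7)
The plan is to work variationally with the integral identities satisfied by $u^\e:=(\ope-\iu)^{-1}f$ and $u^0:=(\opo{\beta}-\iu)^{-1}f$ and to compare them using a carefully chosen boundary corrector. The comparison proceeds by introducing the discrepancy $v^\e:=u^\e-(1-W^\e)u^0$, where $W^\e$ is a sum of localized log-potentials supported near the Dirichlet holes; this choice ensures that $v^\e$ lies in $\Ho^1(\Om^\e,\p\Om\cup\p\tht_0^\e)$ and therefore is an admissible test function in the form $\foe(u^\e,\cdot)$. Subtracting the identity for $u^0$, and using that $\foe$ and $\foo{\beta}$ share the same bulk part $\fao$, produces an equation for $\foe(v^\e,v^\e)-\iu\|v^\e\|^2_{L_2(\Om^\e)}$ in which every remaining term is either an integral over $\p\tht_1^\e$ (Robin holes), an integral over $\p\tht_0^\e$ / the support of $\nabla W^\e$ (Dirichlet holes), or the delta-interaction term $(\beta u^0,v^\e)_{L_2(\g)}$. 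The ellipticity in \eqref{2.3} then yields $\|v^\e\|_{W_2^1(\Om^\e)}$ in terms of these residuals.

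The corrector $W^\e$ must perform two tasks simultaneously: near each $\om_k^\e\subset\tht_0^\e$ it has to equal $1$ on $\p\om_k^\e$ (so $(1-W^\e)u^0$ vanishes there), and on a mesoscopic annulus $B_{b R_2\e}(y_k^\e)\setminus B_{\e\eta(\e)R_2}(y_k^\e)$ it should interpolate radially through the logarithmic capacity profile of the hole in the anisotropic norm defined by $A_{ij}$. Writing $W^\e$ as such a sum of radial log-cutoffs, a direct computation of $\int A_{ij}\p_j W^\e\,\p_i\overline{W^\e\phi}$ over the annulus produces precisely the factor $\pi/(b R_2\cdot(A_{11}A_{22}-A_{12}^2))$ after averaging in angle, together with an overall prefactor $-1/(\e\ln\eta(\e))\to\rho$; this is exactly how the coefficient $\beta$ emerges. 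The explicit formula for $W^\e$ is the one to be written in \eqref{7.31}.

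The next step is to replace the discrete sum $\sum_k(\cdots)$ over boundary integrals on $\p\tht_0^\e$ by a continuous integral against $\alpha^\e(s)$ over $\g$, and then by $\int_\g\alpha\,\cdot$. The first replacement is controlled by a one-sided trace estimate near $\g$ using assumption \ref{B2} (the divergence-free field $X_k$ transfers $\int_{\p\om_k^\e}\cdot$ to an annular integral where standard Sobolev estimates apply), giving an $O(\e^{1/2})$ error. The second replacement is the content of \ref{A14}: the Fourier-type bound \eqref{2.17} is exactly tailored to handle the error $\int_\g(\alpha^\e-\alpha)g\,ds$ for $g$ of class $W_2^1$ (via a dyadic decomposition in Fourier modes along $\g$), producing the $\varkappa(\e)$-term. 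The Robin contribution on $\p\tht_1^\e$ is subcritical in this regime because of the scaling $\e\eta(\e)$ of the holes and is absorbed into $O(\e^{1/2})$.

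Having thus bounded $\|v^\e\|_{W^1_2(\Om^\e)}$ by $C(\e^{1/2}+\varkappa(\e)(\rho+\mu(\e)))$, estimate \eqref{2.21} follows immediately. Dropping the corrector costs $\|W^\e u^0\|_{L_2}=O(\e^{1/2})$ but nothing in $L_2\to L_2$, giving \eqref{2.18a}. Switching the reference operator from $\opo{\beta}$ to $\opo{\beta_0}$ costs $\|(\opo{\beta}-\iu)^{-1}-(\opo{\beta_0}-\iu)^{-1}\|\lesssim\|\beta-\beta_0\|_{\infty}\lesssim\mu(\e)$, yielding \eqref{2.19}. Finally, when $\rho=0$ the target is $\opo{}$ (no interface condition), and one may drop the corrector outright while keeping the $W^1_2$-norm, since the corrector's $W_2^1$-mass is $O(\mu^{1/2}(\e))$ rather than $O(1)$; this produces \eqref{2.20}. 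The main obstacle throughout will be the bookkeeping in the corrector annuli: one must show that the quadratic form applied to $W^\e$ produces precisely the $\beta$-delta-interaction up to the stated errors, and this rests on the anisotropic-capacity computation together with the sharp trace bound near $\g$ supplied by \ref{B2}.
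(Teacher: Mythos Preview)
Your overall strategy matches the paper's: define the anisotropic logarithmic corrector $W^\e$ (exactly as in \eqref{7.31}, built from the ellipses $E_r^k=\{|\mathrm{Q}_k^\e\zeta^k|<\e r R_5\}$), use $\tilde v^\e=u^\e-(1-W^\e)(\text{homogenized solution})$ as an admissible test function, and show that the quadratic form applied to the corrector yields the $\beta$-interaction up to $O(\e^{1/2}+(\rho+\mu)\varkappa)$ errors. The emergence of the factor $(\rho+\mu)/\det\mathrm{A}$ from the log-profile, the transfer of the discrete boundary sums to $(\alpha^\e\cdot,\cdot)_{L_2}$ via \ref{B2}, and the passage $\alpha^\e\to\alpha$ via \ref{A14} and Lemma~\ref{lm6.3} are all as you describe, and your derivations of \eqref{2.18a}, \eqref{2.19}, \eqref{2.20} from \eqref{2.21} are correct.

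There is, however, one technical device you are missing. You compare $u^\e$ directly with $u^0=(\opo{\beta}-\iu)^{-1}f$, whose normal derivative jumps across $\gamma$; but $\gamma$ can pass through the holes $\om_k^\e$, so on the annuli where you integrate by parts (and where you need $W_2^2$-control of the homogenized solution via Lemma~\ref{lm7.2}) the regularity of $u^0$ is broken precisely by $\gamma$. This also obstructs the step ``replace the discrete sum by $\int_\gamma\alpha^\e(\cdot)\,ds$'': the passage from $\partial B_{b_*R_2\e}(y_k^\e)$ to the curve requires integrating a harmonic auxiliary function over a cell that the interface would cut. The paper resolves this by inserting an intermediate operator $\topo{\beta}$ carrying the delta-interaction on the shifted curve $\widetilde{\gamma}=\{\tau=-(b+1)R_2\e\}$, which by \ref{B1} misses all the balls $B^k_{b}$. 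One first proves the analogue of \eqref{2.21} with $\tu:=(\topo{\beta}-\iu)^{-1}f$ in place of $u^0$ (this is \eqref{6.24}), and only afterwards replaces $\tu$ by $u^0$ using the separate comparison Lemma~\ref{lm6.1}, which costs an additional $O(\e^{1/2})$. Without this shift your integral identity and several of the annular estimates do not go through as written.
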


The next two theorems concern the case when $\mathds{M}_0^\e$ is empty, i.e., the perturbed operator involves just the Robin condition on $\p\tht^\e$.

\begin{theorem}\label{th2.2}
Suppose \ref{B1}, \ref{B2}, let set $\mathds{M}_0^\e$ be empty and
either $a\equiv 0$ or $\eta(\e)\to0$, $\e\to+0$.
Then the estimate
\begin{equation}
\|(\ope-\iu)^{-1} -(\opo{}-\iu)^{-1} \|_{L_2(\Om)\to W_2^1(\Om^\e)}\leqslant C \eta(\e)|\ln\eta(\e)|^{\frac{1}{2}}, \label{2.13a}
\end{equation}
holds true, if $a\not\equiv0$, $\eta\to+0$, and
\begin{equation}
\|(\ope-\iu)^{-1}f-(\opo{}-\iu)^{-1}f\|_{L_2(\Om)\to W_2^1(\Om^\e)}\leqslant C \e^{\frac{1}{2}}\eta(\e)(|\ln\eta(\e)|^{\frac{1}{2}}+1), \label{2.13b}
\end{equation}
if $a\equiv 0$. Here $C$ is a positive constant independent of $\e$.
\end{theorem}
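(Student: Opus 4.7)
Set $f\in L_2(\Om)$, $u^\e:=(\ope-\iu)^{-1}f$, $u^0:=(\opo{}-\iu)^{-1}f$, and $w:=u^\e-u^0\big|_{\Om^\e}\in\Ho^1(\Om^\e,\p\Om)$. Because $\mathds{M}_0^\e=\emptyset$, we have $\p\tht^\e=\p\tht^\e_1$ and $w$ carries no trace constraint on $\p\tht^\e$. Since $u^0\in\Dom(\opo{})=\Ho^1(\Om)\cap W_2^2(\Om)$ is smooth across $\g$, Green's formula for $u^0$ on $\Om^\e$ combined with the integral identity for $u^\e$ and the rearrangement $(au^\e,w)_{\p\tht^\e_1}=(aw,w)_{\p\tht^\e_1}+(au^0,w)_{\p\tht^\e_1}$ yields the master identity
\[
\fae(w,w)+(aw,w)_{L_2(\p\tht^\e_1)}-\iu\|w\|^2_{L_2(\Om^\e)}=-\int_{\p\tht^\e_1}\Big(\frac{\p u^0}{\p N^\e}+au^0\Big)\overline{w}\,ds.
\]
Taking real and imaginary parts and invoking the ellipticity (\ref{2.3}) via Young's inequality reduces the estimate for $\|w\|_{W_2^1(\Om^\e)}$ to controlling the right-hand side and the Robin form term on the left.

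The trace estimates on $\p\tht^\e_1$ are produced from hypothesis \ref{B2}. The rescaled fields $Y_k^\e(x):=X_k\big((x-y_k^\e)/(\e\eta)\big)$ are divergence-free on $D_k:=B_{b_*R_2\e\eta}(y_k^\e)\setminus\om_k^\e$, uniformly bounded in $L_\infty$, and satisfy $Y_k^\e\cdot\nu^\e=1$ on $\p\om_k^\e$. Applying the divergence theorem to $|w|^2 Y_k^\e$ on $D_k$, summing in $k$, and controlling the contribution of the outer spheres $\p B_k:=\p B_{b_*R_2\e\eta}(y_k^\e)$ by an averaging argument in polar coordinates that generates the 2D logarithmic capacity factor via $\int_{\e\eta}^{\e}\frac{dr}{r}=|\ln\eta|+O(1)$, delivers
\[
\|w\|^2_{L_2(\p\tht^\e_1)}\leqslant C\eta(|\ln\eta|+1)\|w\|^2_{W_2^1(\Om^\e)}.
\]
For $\eta\to 0$ the Robin form $(aw,w)_{\p\tht^\e_1}$ is absorbed into the elliptic coercivity.

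For the principal boundary integral, I apply Green's formula to $u^0$ on each annular region $D_k$, converting
\[
\int_{\p\om_k^\e}\!\frac{\p u^0}{\p N^\e}\overline{w}\,ds=\fae(u^0,w)\big|_{D_k}-\iu(u^0,w)_{L_2(D_k)}-(f,w)_{L_2(D_k)}-\int_{\p B_k}\!\frac{\p u^0}{\p N_{B_k}}\overline{w}\,ds.
\]
Summed in $k$, the volume contributions are bounded using the a priori estimate $\|u^0\|_{W_2^2(\Om)}\leqslant C\|f\|_{L_2(\Om)}$, the thin-layer bound $\|u^0\|^2_{W_2^1(\{|\tau|<\e\eta\})}\leqslant C\e\eta\|u^0\|^2_{W_2^2(\Om)}$, the scaling $|\tht_D^\e|=O(\e\eta^2)$ (since there are $O(\e^{-1})$ holes of size $(\e\eta)^2$ each), the 2D Sobolev embedding $W_2^2(\Om)\hookrightarrow L_\infty(\Om)$ giving a Poincar\'e-type improvement on each hole, and a decomposition subtracting local averages $\bar w_k$ of $w$ over $D_k$. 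The outer-sphere term $\sum_k\int_{\p B_k}\frac{\p u^0}{\p N_{B_k}}\overline{w}\,ds$ is controlled by repeating the averaging argument of the preceding paragraph. Together these produce the target scaling $\e^{1/2}\eta(|\ln\eta|^{1/2}+1)\|f\|_{L_2(\Om)}\|w\|_{W_2^1(\Om^\e)}$ for this integral.

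Finally: when $a\equiv 0$, both $(aw,w)_{\p\tht^\e_1}$ and the $au^0$ boundary term vanish, and the estimates above yield directly (\ref{2.13b}). When $a\not\equiv 0$ and $\eta\to 0$, the contribution $(au^0,w)_{L_2(\p\tht^\e_1)}$ dominates; Cauchy--Schwarz combined with the $W_2^2\hookrightarrow L_\infty$ bound $\|u^0\|^2_{L_2(\p\tht^\e_1)}\leqslant C\eta\|u^0\|^2_{W_2^2(\Om)}$ and the trace bound for $w$ gives $|\,(au^0,w)_{\p\tht^\e_1}|\leqslant C\eta(|\ln\eta|+1)^{1/2}\|f\|_{L_2}\|w\|_{W_2^1(\Om^\e)}$, producing (\ref{2.13a}). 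The main technical obstacle is the averaging argument on the spheres $\p B_k$ that extracts the precise factor $|\ln\eta|^{1/2}$ reflecting the 2D logarithmic capacity of the small holes; this is exactly where assumption \ref{B2} on the solvability of the auxiliary divergence problem plays the decisive role.
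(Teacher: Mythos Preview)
Your overall scheme is the same as the paper's: form the difference $w=u^\e-u^0$, write the integral identity with the Robin right-hand side $-\int_{\p\tht^\e_1}\big(\tfrac{\p u^0}{\p N^\e}+au^0\big)\overline{w}\,ds$, use the divergence-free fields $X_k$ from \ref{B2} to get the trace bound (\ref{3.12}), and integrate by parts on the annuli $D_k$ to push the co-normal term off $\p\om_k^\e$. All of this matches the paper's Section~5.1 and Lemma~\ref{lm3.3b}.

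The genuine gap is your treatment of the outer-sphere integrals $\sum_k\int_{\p B_k}\tfrac{\p u^0}{\p N_{B_k}}\overline{w}\,ds$. Saying ``repeat the averaging argument'' amounts to Cauchy--Schwarz plus the trace estimate (\ref{3.20}) on $\p B_k$, which yields at best
\[
\Big(\textstyle\sum_k\|\nabla u^0\|_{L_2(\p B_k)}^2\Big)^{\frac12}
\Big(\textstyle\sum_k\|w\|_{L_2(\p B_k)}^2\Big)^{\frac12}
\leqslant C\,\eta(|\ln\eta|+1)\|f\|_{L_2(\Om)}\|w\|_{W_2^1(\Om^\e)}.
\]
This rate suffices for (\ref{2.13a}) but is strictly weaker than the $\e^{\frac12}\eta(|\ln\eta|^{\frac12}+1)$ you claim, so (\ref{2.13b}) does not follow. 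The paper avoids this loss by an extra device you are missing: it introduces the explicit harmonic functions
\[
U_{k,i}^\e(x)=\frac{4}{(b+1)^2-4}\Big(\z_i^k+\frac{(b+1)^2R_2^2}{4}\e^2\eta^2\frac{\z_i^k}{|\z^k|^2}\Big),
\]
whose Neumann data on $\p B^k_1$ reproduce $\nu_i$, and integrates by parts in the identity $0=\int(\ldots)\D U_{k,i}^\e$ to convert the $\p B^k_1$ boundary term into a \emph{volume} integral over $B^k_{b_*}\setminus B^k_1$ with bounded integrand $|\nabla U_{k,i}^\e|\leqslant C$. Only after this conversion does (\ref{3.11}) yield the sharp factor $\e^{\frac12}\eta(|\ln\eta|^{\frac12}+1)$; see (\ref{5.14})--(\ref{5.12}).

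A second, smaller issue: your bound $\|u^0\|_{L_2(\p\tht^\e_1)}^2\leqslant C\eta\|u^0\|_{W_2^2}^2$ via the global embedding $W_2^2(\Om)\hookrightarrow L_\infty(\Om)$ fails when $\g$ is infinite, because then $|\p\tht^\e_1|=\infty$. The paper instead uses a localized sup-norm bound (Lemma~\ref{lm7.2}), namely $\sum_k\|u^0\|_{C(\overline{B_{R_2\e}(y_k^\e)})}^2\leqslant C\e^{-1}\|u^0\|_{W_2^2(\Om)}^2$, which combined with $|\p\om_k^\e|\leqslant C\e\eta$ gives the desired $\|u^0\|_{L_2(\p\tht^\e)}^2\leqslant C\eta\|u^0\|_{W_2^2}^2$ uniformly for both finite and infinite $\g$.
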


\begin{theorem}\label{th2.3}
Suppose \ref{B1}, \ref{B2},  let $\eta=\mathrm{const}$, set $\mathds{M}_0^\e$ be empty. For $b$ and $R_2$ in \ref{B1} we denote
\begin{equation*}
\alpha^\e(s):=\left\{
\begin{aligned}
& \frac{|\p\om_k|\eta}{2b R_2}, \quad  |s-s_k^\e|<b R_2\e,\quad k\in\mathds{M}^\e,
\\
&\hphantom{000} 0, \hphantom{00} \quad \text{otherwise}.
\end{aligned}\right.
\end{equation*}
Suppose also that
\begin{enumerate}\def\theenumi{(A\arabic{enumi})} \setcounter{enumi}{4}

\item\label{A10}
There exists a function $\alpha=\alpha(s)$ in $\Hinf^1(\g)$ and a function $\varkappa=\varkappa(\e)$, $\varkappa(\e)\to+0$, $\e\to+0$, such that for all sufficiently small $\e$ the  estimates (\ref{2.17})
are valid.
\end{enumerate}

\noindent
Then the estimate
\begin{equation}\label{2.13c}
\|(\ope-\iu)^{-1} -(\opo{\alpha a}-\iu)^{-1} \|_{L_2(\Om)\to W_2^1(\Om^\e)}\leqslant C\big(\e^{\frac{1}{2}}+\varkappa(\e)\big)
\end{equation}
holds true, where $C$ is a positive constant independent of $\e$.
\end{theorem}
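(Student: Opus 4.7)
My plan is, for arbitrary $f\in L_2(\Om)$, to work with $u^\e:=(\ope-\iu)^{-1}f$, $u^0:=(\opo{\a a}-\iu)^{-1}f$ and their difference $w:=u^\e-u^0|_{\Om^\e}$. Since $\mathds{M}_0^\e=\emptyset$, the restriction of any $\Ho^1(\Om^\e,\p\Om)$-function is admissible for $\foe$; using $u^0\in\Ho^1(\Om)\cap W_2^2(\Om_\pm)$ and the transmission condition \eqref{2.13} for $\beta=\a a$, I would integrate $\opo{\a a}u^0=f+\iu u^0$ against $\bar v$ on $\Om_\pm\setminus\tht^\e$ by parts to obtain, for every $v\in\Ho^1(\Om^\e,\p\Om)$,
\begin{equation*}
\fae(u^0,v)-\iu(u^0,v)_{L_2(\Om^\e)}=(f,v)_{L_2(\Om^\e)}-\int\limits_{\g\cap\Om^\e}\a a u^0\bar v\di s+\int\limits_{\p\tht^\e}\frac{\p u^0}{\p N^\e}\bar v\di s.
\end{equation*}
Subtracting this from the variational identity for $u^\e$, expanding $u^\e=w+u^0$ in the Robin term, and choosing $v=w$, I arrive at
\begin{equation*}
\foe(w,w)-\iu\|w\|_{L_2(\Om^\e)}^2=-\int\limits_{\p\tht_1^\e}a u^0\bar w\di s+\int\limits_{\g\cap\Om^\e}\a a u^0\bar w\di s-\int\limits_{\p\tht^\e}\frac{\p u^0}{\p N^\e}\bar w\di s.
\end{equation*}
The target is to bound the right-hand side by $C\bigl(\e^{\frac12}+\varkappa(\e)\bigr)\|f\|_{L_2(\Om)}\|w\|_{W_2^1(\Om^\e)}$, after which coercivity of $\RE\foe+c\|\cdot\|_{L_2(\Om^\e)}^2$ on $\Ho^1(\Om^\e,\p\Om)$ (with $c$ independent of $\e$ by the uniform ellipticity \eqref{2.3}) together with the imaginary part of the identity will yield \eqref{2.13c}.

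The heart of the argument is a hole-by-hole reduction through \ref{B2}. I would set $X_k^\e(x):=X_k\bigl((x-y_k^\e)/\e\bigr)$ and observe that $\mathrm{div}\,X_k^\e=0$ on $B_{b_*R_2\e}(y_k^\e)\setminus\om_k^\e$, $X_k^\e\cdot\nu=-1$ on $\p\om_k^\e$, $X_k^\e\cdot\nu=\vp_k^\e$ on the sphere, with $\|X_k^\e\|_\infty$ uniform in $k,\e$. For any $g\in W_2^1$ the divergence theorem then gives
\begin{equation*}
\int\limits_{\p\om_k^\e}g\di s=\int\limits_{B_{b_*R_2\e}(y_k^\e)\setminus\om_k^\e}X_k^\e\cdot\nabla g\di x+\int\limits_{\p B_{b_*R_2\e}(y_k^\e)}g\,\vp_k^\e\di s.
\end{equation*}
Applying this with $g=au^0\bar w$ and with $g=(\p u^0/\p N^\e)\bar w$ and summing over $k\in\mathds{M}^\e$, the disjointness of the balls (from \ref{B1}) and the fact that their union lies in the $O(\e)$-tube around $\g$, together with the two-dimensional embedding $W_2^2(\Om)\hookrightarrow L_\infty(\Om)$ and Cauchy--Schwarz, would bound each bulk contribution by $C\e^{\frac12}\|u^0\|_{W_2^2(\Om)}\|w\|_{W_2^1(\Om^\e)}$. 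The spherical integrals, after Taylor-expanding $g$ around $(s_k^\e,0)$ in the coordinates $(s,\tau)$ (legitimate since $\g\in C^3$ and all holes sit in $|\tau|<b_*R_2\e$) and invoking \eqref{2.26}, reduce to $\int_{|s-s_k^\e|<bR_2\e}\a^\e g|_{\tau=0}\di s$ modulo a further $O(\e^{\frac12})$ term; this is precisely why the choice $\a^\e(s)=|\p\om_k|\eta/(2bR_2)$ on $|s-s_k^\e|<bR_2\e$ is forced.

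After these reductions the first two boundary terms combine to $\int_\g(\a-\a^\e)au^0\bar w\di s$ plus an $O(\e^{\frac12})\|f\|_{L_2(\Om)}\|w\|_{W_2^1(\Om^\e)}$ remainder, while the third contributes only $O(\e^{\frac12})$. This is where \ref{A10} enters: expanding $au^0\bar w$ in a Fourier series on each unit arc of $\g$ (respectively on the whole closed $\g$) and using Parseval with \eqref{2.17}, together with the trace inequality $W_2^1(\Om_\pm)\hookrightarrow W_2^{1/2}(\g)$ and the multiplier property of $\a,a\in\Hinf^1(\g)$, I would derive
\begin{equation*}
\Big|\int\limits_\g(\a-\a^\e)au^0\bar w\di s\Big|\leqslant C\varkappa(\e)\|u^0\|_{W_2^1(\Om)}\|w\|_{W_2^1(\Om^\e)}.
\end{equation*}
Collecting all terms and using the a priori bound $\|u^0\|_{W_2^2(\Om)}\leqslant C\|f\|_{L_2(\Om)}$ would close \eqref{2.13c}. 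The main obstacle I foresee is the Taylor-expansion step that turns $\sum_k\int_{\p B_{b_*R_2\e}(y_k^\e)}g\,\vp_k^\e\di s$ into $\int_\g\a^\e g\di s$ with an $\e^{\frac12}$ (and not a weaker) rate: it requires simultaneously the exact matching \eqref{2.26}, the $C^3$-smoothness of $\g$ (to control the second-order remainder in $\tau$), and the uniform $L_\infty$-bound on $X_k^\e$ provided by \ref{B2}.
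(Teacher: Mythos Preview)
Your overall strategy---variational identity for $w=u^\e-u^0$, reduce hole integrals to spherical integrals via \ref{B2}, then to $\g$ via Taylor expansion, and finally invoke \ref{A10} through a Fourier argument---is exactly the structure of the paper's proof. However there is a genuine gap at a point the paper flags explicitly.

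The curve $\g$ can pass through the holes $\om_k^\e$ (the centers $y_k^\e$ lie on $\g$). Consequently:
\begin{itemize}
\item $u^0$ belongs only to $W_2^2(\Om_\pm)$, not to $W_2^2(\Om)$; the normal derivative of $u^0$ jumps across $\g$. Your divergence-theorem identity with $g=(\p u^0/\p N^\e)\bar w$ on the annulus $B_{b_*R_2\e}(y_k^\e)\setminus\om_k^\e$ then fails: $\nabla u^0$ is not in $W_2^1$ across $\g$, so $X_k^\e\cdot\nabla g$ is not integrable there without picking up an extra line integral on $\g\cap$(annulus). Likewise your final bound ``$\|u^0\|_{W_2^2(\Om)}\leqslant C\|f\|_{L_2(\Om)}$'' is simply false.
\item $w\in W_2^1(\Om^\e)$ has a trace only on $\g\cap\Om^\e$, not on all of $\g$. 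Your Fourier-series / Parseval argument to bound $\int_\g(\a-\a^\e)au^0\bar w\di s$ requires the trace of $w$ on full unit arcs of $\g$ (or on all of a closed $\g$); the missing pieces over $\g\cap\tht^\e$ prevent this.
\end{itemize}

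The paper's remedy is precisely to avoid $\g$ in the comparison. It introduces the shifted curve $\widetilde{\g}:=\{x:\tau=-(b+1)R_2\e\}$, which by \ref{B1} does not meet any $B_b^k$, and an auxiliary homogenized operator $\topo{\a a}$ with the delta interaction on $\widetilde{\g}$. One first proves $\|u^\e-\tu\|_{W_2^1(\Om^\e)}\leqslant C(\e^{1/2}+\varkappa)\|f\|$ exactly along the lines you outline (now $\tu\in W_2^2(\Om\setminus\widetilde{\g})$ is $W_2^2$ across every annulus, and $v^\e=u^\e-\tu$ has a full trace on $\widetilde{\g}$, so both obstacles disappear), and then uses a separate lemma showing $\|(\topo{\a a}-\iu)^{-1}-(\opo{\a a}-\iu)^{-1}\|_{L_2\to W_2^2(\Om\setminus(\g\cup\widetilde{\g}))}\leqslant C\e^{1/2}$ to pass from $\tu$ to $u^0$. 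Your argument becomes correct once you make this detour; without it the integration-by-parts and the Fourier step both break.
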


Let us discuss the main results. Assumption \ref{B1} says that the sizes of holes are of the same order and there is a minimal distance between them. This is a very natural assumption for the perforation. At the same time, no periodicity for the perforation is assumed. Moreover, since set $\mathds{M}^\e$ is arbitrary, we do not need to assume that it is infinite, and for instance, the number of holes can be finite. In the latter case, by choosing an appropriate set $\mathds{M}^\e$, we can even get the situation when the distances between the holes are not small, but finite. In this situation one can still apply Theorems~\ref{th2.4}--\ref{th2.3}. Theorem~\ref{th2.1} is valid only in the case when the holes with Dirichlet condition are distributed quite densely in order to satisfy Assumption \ref{A5}.

Assumption \ref{B2} is a restriction for the geometry of boundaries $\p\om_k$. We first stress that problem (\ref{2.1a}) can be rewritten to the Neumann problem for the Laplace equation by letting $X_k=\nabla V_k$. Then identity (\ref{2.26}) is the solvability condition and this is the only restriction for $\vp_k$ we suppose. Problem (\ref{2.1a}) is solvable for each fixed $k$ and it is solution belongs to $L_\infty(B_{b_* R_2}(0)\setminus\om_k)$. And we assume that the norm $\|X_k\|_{L_\infty(B_{b_* R_2}(0)\setminus\om_k)}$ is bounded uniformly in $k$.

According to Theorem~\ref{th2.1}, if the sizes of the holes are not too small (cf. (\ref{2.1ab})) and the holes with the Dirichlet condition are, roughly speaking, distributed ``uniformly'' (Assumption \ref{A5}), the homogenized operator is subject to the Dirichlet condition on $\g$ and we have the norm resolvent condition in the sense of the operator norm $\|~\cdot~\|_{L_2(\Om)\to L_2(\Om^\e)}$. As one can see, relation (\ref{2.1ab}) admits the situation when the sizes of the holes are much smaller than the distances between them (for instance, $\eta(\e)=\e^\alpha$, $\alpha=const>0$), but nevertheless the homogenized operator is still subject to the Dirichlet condition on $\g$. This phenomenon is close to a similar one for the operators with frequent alternation of boundary conditions, cf.
\cite{ZAMP}, \cite{JPA09}, \cite{Ch1}.

If the function $\e\ln\eta(\e)$ goes to a constant or to infinity as $\e\to+0$ and there are holes with the Dirichlet condition, the homogenized operator has boundary condition (\ref{2.13}) on $\g$, see Theorem~\ref{th2.4}. This boundary condition describes a delta-interaction on $\g$, see, for instance, \cite[App. K, Sec. K.4.1]{Al},  and  the similar situation holds for the problems with frequent alternation of boundary conditions with the Dirichlet conditions on exponentially small parts of the boundary, cf. \cite{Ch1}, \cite{AHP}, \cite{CRAS}, \cite{ZAMP}. The norm resolvent convergence holds in the sense of the operator norm $\|\cdot\|_{L_2(\Om)\to L_2(\Om^\e)}$ only. To improve the norm, one  has either to employ the boundary corrector, see (\ref{2.21}), or to assume additionally $\rho=0$, see (\ref{2.20}). We observe that according to Assumption \ref{A14}, coefficient $\beta$ in boundary condition (\ref{2.13}) for the homogenized operator depends only on the distribution of the points $s_k^\e$ and there is no dependence on the geometries of the holes. There are also no special restrictions for part $\p\tht^\e_0$ with the Dirichlet condition. For instance, the number of holes in $\p\tht^\e_0$ can be finite or infinite and the distribution of this set can be very arbitrary.

If the perturbed operator has no Dirichlet condition on $\p\tht^\e$, the homogenized operator has either condition (\ref{2.13}) on $\g$ (Theorem~\ref{th2.3}) or even no condition (Theorem~\ref{th2.2}). In both cases we again have the norm resolvent convergence in the operator norm $\|\cdot\|_{L_2(\Om)\to W_2^1(\Om^\e)}$.
In Theorem~\ref{th2.2} we need no additional restrictions thanks to the assumption $\eta(\e)\to+0$ or $a\equiv 0$. In Theorem~\ref{th2.3} $\eta$ is constant and because of this we introduce Assumption \ref{A10}. Its means that the lengths of $\p\om_k$ should be distributed rather smoothly to satisfy (\ref{2.17}). We stress that the coefficient $\beta$ in (\ref{2.13}) for the homogenized operator depends both on the distribution of the holes and the sizes of their boundaries.

Let us also discuss assumptions \ref{A14} and \ref{A10}. This is in fact the same assumption but adapted for two different cases. The sum in the left hand side of (\ref{2.17}) is nothing but the norm in $W_2^{-\frac{1}{2}}(0,\ell)$.  This estimate obviously holds true for a periodic perforation. As an example of a non-periodic perforation, we can mention the situation when we start with a strictly periodic perforation along an infinite curve but then we change the geometry and locations of a part of holes so that the total number of  deformed holes associated with each segment $s\in(q,q+1)$, $q\in\mathds{Z}$, is relatively small in comparison with unchanged holes. Then inequality (\ref{2.17}) is still true. Moreover, our conjecture is that Assumptions \ref{A14} and \ref{A10} can not be improved or omitted once we want to have a norm resolvent convergence. Namely, in the proofs of Theorems~\ref{th2.2},~\ref{th2.3} these assumptions are employed only in Lemma~\ref{lm6.3}  and all the inequalities in the proof of this lemma are sharp.
There is also another way  of simplifying (\ref{2.17}) which is estimating $W_2^{-\frac{1}{2}}(0,\ell)$-norm by  $L_2(0,\ell)$-norm. Then (\ref{2.17}) can be replaced by
\begin{equation*}
\|\a^\e-\a\|_{L_2(n,n+\ell)}^2
\leqslant C \|\a^\e-\a\|_{L_1(n,n+\ell)}\leqslant \varkappa^2(\e),
\end{equation*}
where we have employed the boundedness of $\a^\e$, see Lemma~\ref{lm6.3}. However, this condition happens to be too restrictive and is satisfied just by few examples.

Last but not the least issue related to the above theorems is  the sharpness of the estimates for the rate of convergences. Many of these estimates are order sharp, i.e., the smallness order can not be improved. At the same time, the study of the sharpness is an independent problem that requires a completely different approach in comparison with the technique we employ in the proofs of Theorems~\ref{th2.1}--\ref{th2.3}. This is why we formulate no statement on the sharpness in the theorems and we discuss this issue independently in Section~8.

Our final main result describes the convergence of the spectrum of $\ope$.

\begin{theorem}\label{th2.5}
Under the hypotheses of Theorems~\ref{th2.1}--\ref{th2.3} the spectrum of perturbed operator $\ope$ converges to that of the corresponding homogenized operator. Namely, if $\lambda$ is not in the spectrum of the homogenized operator, for sufficiently small $\e$ the same is true for the perturbed operator. And if $\lambda$ is in the spectrum of the homogenized operator, for each $\e$ there exists $\lambda_\e$ in the spectrum of the perturbed operator such that $\lambda_\e\to\lambda$ as $\e\to+0$.
\end{theorem}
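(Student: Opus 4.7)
The plan is to deduce the spectral convergence from the norm resolvent estimates of Theorems~\ref{th2.1}--\ref{th2.3} via the spectral mapping theorem applied to the bounded transform $z\mapsto(z-\iu)^{-1}$, while carefully handling the mismatch that $\ope$ and the corresponding homogenized operator $\mathcal{H}^0$ (one of $\opo{D}$, $\opo{\b}$, $\opo{}$, $\opo{\a a}$, depending on the case) act on different Hilbert spaces. All comparisons will be made via the restriction operator $L_2(\Om)\to L_2(\Om^\e)$ and extension by zero in the opposite direction. By \ref{B1} the perforated region satisfies $|\Om\setminus\Om^\e|\to 0$ as $\e\to+0$, and this is what will control the mass exchange between the two spaces.

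\emph{Step 1: the resolvent set is preserved.} Fix $\lambda\notin\spec(\mathcal{H}^0)$ and set $\mu:=(\lambda-\iu)^{-1}$. By the spectral mapping theorem for the bounded self-adjoint operator $(\mathcal{H}^0-\iu)^{-1}$ there is $d>0$ with $\|((\mathcal{H}^0-\iu)^{-1}-\mu)v\|_{L_2(\Om)}\geqslant d\|v\|_{L_2(\Om)}$ for all $v\in L_2(\Om)$. Suppose for contradiction that $\lambda\in\spec(\mathcal{H}^{\e_n})$ along some $\e_n\to+0$. Pick a Weyl sequence $u_n\in\Dom(\mathcal{H}^{\e_n})$ with $\|u_n\|_{L_2(\Om^{\e_n})}=1$ and $f_n:=(\mathcal{H}^{\e_n}-\lambda)u_n\to 0$ in $L_2(\Om^{\e_n})$, and set $g_n:=(\mathcal{H}^{\e_n}-\iu)u_n$. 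The elementary identity $(\mathcal{H}^{\e_n}-\iu)^{-1}g_n-\mu g_n=-\mu f_n$ shows that $((\mathcal{H}^{\e_n}-\iu)^{-1}-\mu)g_n\to 0$ in $L_2(\Om^{\e_n})$, while $\|g_n\|_{L_2(\Om^{\e_n})}^2=\|\mathcal{H}^{\e_n}u_n\|^2+1\geqslant 1$ and is bounded above. Extending $g_n$ by zero to $\widetilde g_n\in L_2(\Om)$, Theorems~\ref{th2.1}--\ref{th2.3} together with the previous display yield $((\mathcal{H}^0-\iu)^{-1}-\mu)\widetilde g_n\to 0$ in $L_2(\Om^{\e_n})$. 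Since $(\mathcal{H}^0-\iu)^{-1}\widetilde g_n$ is bounded in $W_2^1(\Om)$ and $|\Om\setminus\Om^{\e_n}|\to 0$, a Hölder argument upgrades the convergence to the full $L_2(\Om)$-norm, contradicting the lower bound $d$.

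\emph{Step 2: every spectral value is attained.} Let $\lambda\in\spec(\mathcal{H}^0)$ and choose a Weyl sequence $w_n\in\Dom(\mathcal{H}^0)$ with $\|w_n\|_{L_2(\Om)}=1$ and $(\mathcal{H}^0-\lambda)w_n\to 0$ in $L_2(\Om)$; then $f_n:=(\mathcal{H}^0-\iu)w_n$ is uniformly bounded in $L_2(\Om)$. Set $u_n^\e:=(\mathcal{H}^\e-\iu)^{-1}(f_n|_{\Om^\e})\in\Dom(\mathcal{H}^\e)$. The norm resolvent convergence (using estimate (\ref{2.18a}) in the setting of Theorem~\ref{th2.4}, so that no corrector enters) gives $\|u_n^\e-w_n|_{\Om^\e}\|_{L_2(\Om^\e)}\to 0$ as $\e\to+0$, uniformly in $n$. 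From $(\mathcal{H}^\e-\iu)u_n^\e=f_n|_{\Om^\e}$ we obtain
\begin{equation*}
(\mathcal{H}^\e-\lambda)u_n^\e=[(\mathcal{H}^0-\lambda)w_n]|_{\Om^\e}+(\iu-\lambda)(u_n^\e-w_n|_{\Om^\e}),
\end{equation*}
so $\|(\mathcal{H}^\e-\lambda)u_n^\e\|_{L_2(\Om^\e)}\to 0$ along a suitable diagonal choice of $(n,\e)$, while $\|u_n^\e\|_{L_2(\Om^\e)}\to 1$ because $\|w_n\|_{L_2(\Om\setminus\Om^\e)}\to 0$ by absolute continuity of the integral. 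If $\dist(\lambda,\spec(\mathcal{H}^\e))\geqslant d_0>0$ held on some subsequence $\e\to+0$, then $\|(\mathcal{H}^\e-\lambda)^{-1}\|\leqslant d_0^{-1}$ would force $\|u_n^\e\|_{L_2(\Om^\e)}\leqslant d_0^{-1}\|(\mathcal{H}^\e-\lambda)u_n^\e\|\to 0$, a contradiction; hence $\dist(\lambda,\spec(\mathcal{H}^\e))\to 0$ and the desired $\lambda_\e\in\spec(\mathcal{H}^\e)$ with $\lambda_\e\to\lambda$ exists.

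The main obstacle is the mismatch of the Hilbert spaces on which $\ope$ and $\mathcal{H}^0$ act: extension by zero and restriction must not destroy the Weyl sequence analysis on either side. This is handled by the $L_2$-mass estimate $|\Om\setminus\Om^\e|\to 0$ from \ref{B1} combined with the fact that functions produced by the homogenized resolvent lie in $W_2^1(\Om)$, giving the higher integrability needed to kill the integrals over the shrinking hole region. A subordinate technical point is that in Theorem~\ref{th2.4} with $\rho\ne 0$ the $W_2^1$-norm convergence of the resolvents is valid only modulo the boundary corrector $W^\e$; the plain $L_2$-to-$L_2$ estimate (\ref{2.18a}) nevertheless suffices for the present spectral comparison, so no corrector enters the argument.
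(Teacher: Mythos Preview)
Your argument is correct, but it follows a different route from the paper's. The paper avoids the Weyl-sequence bookkeeping altogether by a direct-sum trick: it introduces the Dirichlet Laplacian $\opt$ on the holes $\tht^\e$ and observes that $\inf\spec(\opt)\geqslant C\e^{-2}\eta^{-2}\to\infty$, so that $\spec(\ope\oplus\opt)$ and $\spec(\ope)$ agree on any bounded set for small $\e$. Since $\ope\oplus\opt$ acts on the \emph{fixed} space $L_2(\Om)=L_2(\Om^\e)\oplus L_2(\tht^\e)$, the resolvent estimates of Theorems~\ref{th2.1}--\ref{th2.3} together with the bound $\|(\opo{*}-\iu)^{-1}\|_{L_2(\Om)\to L_2(\tht^\e)}\leqslant C\e$ immediately yield genuine norm resolvent convergence $(\ope\oplus\opt-\iu)^{-1}\to(\opo{*}-\iu)^{-1}$ in $\mathcal{B}(L_2(\Om))$, after which the classical Reed--Simon theorems apply verbatim. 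Your approach handles the space mismatch by hand via extension by zero, restriction, and Weyl sequences; it is self-contained (no black-box spectral theorems) but longer. Both methods ultimately rest on the same smallness of the $L_2$-mass over $\tht^\e$, which you extract from $W_2^1$-boundedness plus H\"older, and the paper extracts from its Lemma~\ref{lm3.3b}. One minor looseness: in Step~2 the phrase ``by absolute continuity of the integral'' for $\|w_n\|_{L_2(\Om\setminus\Om^\e)}\to0$ is not literally applicable since $w_n$ varies with $n$; what you need (and have, since $w_n=(\mathcal{H}^0-\iu)^{-1}f_n$ with $f_n$ bounded) is the uniform $W_2^1$-bound on $w_n$, after which the same H\"older argument from Step~1 goes through.
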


We note that this theorem is not implied immediately by Theorems~\ref{th2.1}--\ref{th2.3}. Despite these theorems state convergence of the perturbed resolvent  to a homogenized one in the norm sense, the norm is $\e$-dependent. Nevertheless, this makes no serious troubles and in the proof of Theorem~\ref{th2.5} we demonstrate a simple trick to overcome this difficulty.

Throughout the rest of the paper we shall indicate by $C$, $C_1$, $C_2$, $C_3$, \ldots various inessential positive constants independent of $\e$, $\eta(\e)$, $s$, $\tau$, $x$, and various functions $f$, $u$, $v$, \ldots\  from Sobolev spaces we shall deal with. In all the estimates such constants are independent on the functions written explicitly. In the case of local estimates in a vicinity of each $\om_k^\e$ such constants are also supposed to be independent of $k$. If these constants depend on some auxiliary parameters, it will be indicated explicitly. We shall also make use of  auxiliary notations: $B^k_r:=B_{r R_2\e\eta}(y_k^\e)$, $\widehat{B}^k_r:=B_{r R_2\e\eta}(y_k^\e)\setminus\om_k^\e$.

\section{Preliminaries}

In this section we collect several auxiliary lemmata which will be employed in the proof of our main results in the subsequent sections. These lemmata provide some estimates for various norms in Sobolev spaces as well as some local estimates in the vicinity of holes $\om_\e^k$.

In all the lemmata we assume \ref{B1}, \ref{B2}.

\begin{lemma}\label{lm1.6}
For any $\d>0$ there exists a constant $C(\d)>0$ such that
the estimates
\begin{align}
&\|u\|_{L_2(\g)}^2\leqslant \d\|\nabla u\|_{L_2(\Om)}^2+C(\d)\|u\|_{L_2(\Om)}^2,\nonumber
\\
&\|v\|_{L_2(\{x: \tau=-(b+1)R_2\e\})}^2\leqslant \d\|\nabla v\|_{L_2(\Om^\e)}^2+C(\d)\|v\|_{L_2(\Om^\e)}^2,\label{3.15}
\end{align}
are valid for each $u\in W_2^1(\Om)$, $v\in W_2^1(\Om^\e)$.
\end{lemma}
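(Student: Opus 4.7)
The plan is to deduce both inequalities from one and the same one-parameter trace estimate, obtained from the fundamental theorem of calculus, applied in the local coordinates $(s,\tau)$ introduced before the lemma. Because $\g$ is $C^3$-smooth with uniformly bounded curvature, the change of variables $(s,\tau)\mapsto x$ is bi-Lipschitz on $\{|\tau|<\tau_0\}$ with constants uniform in $s$, so the Jacobian is pinched between two positive constants and $L_2$-norms computed in either coordinate system are equivalent up to factors that do not depend on $\e$.

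For the first estimate, I would start from the identity $u(s,0)=u(s,\tau)-\int_0^\tau\p_t u(s,t)\di t$ for smooth $u$ and $0<\tau<h\leqslant\tau_0$, square it, apply the Cauchy--Schwarz inequality, and then average in $\tau$ over $(0,h)$. This produces the pointwise bound
\begin{equation*}
|u(s,0)|^2\leqslant \tfrac{2}{h}\int_0^h|u(s,\tau)|^2\di\tau+h\int_0^h|\p_\tau u(s,t)|^2\di t.
\end{equation*}
Integrating in $s$ and transferring the right-hand side to the Cartesian picture via the equivalence of Jacobians yields $\|u\|_{L_2(\g)}^2\leqslant C_1 h^{-1}\|u\|_{L_2(\Om)}^2+C_2 h\|\nabla u\|_{L_2(\Om)}^2$; setting $h=\d/C_2$ and extending from $C^1(\overline{\Om})$ to $W_2^1(\Om)$ by density gives the first inequality.

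For the second estimate, the only new ingredient is a geometric observation. By Assumption \ref{B1} together with $\eta(\e)\leqslant 1$, one has $\om_k^\e\subset B_{R_2\e\eta(\e)}(y_k^\e)\subset B_{R_2\e}(y_k^\e)$, so every hole is contained in the strip $\{|\tau|<R_2\e\}$; since $b>1$, the line $\{\tau=-(b+1)R_2\e\}$ lies strictly below the perforation. Consequently there exists a fixed $h_0\in(0,\tau_0/2)$ such that for all sufficiently small $\e$ the strip
\begin{equation*}
S^\e_h:=\{x\in\Om:-(b+1)R_2\e-h<\tau<-(b+1)R_2\e\}
\end{equation*}
lies in $\Om^\e$ (it is both inside $\{|\tau|<\tau_0\}$ and disjoint from $\tht^\e$) whenever $0<h\leqslant h_0$. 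Repeating the one-dimensional argument across $S^\e_h$, i.e.\ writing $v(s,-(b+1)R_2\e)=v(s,-(b+1)R_2\e-t)+\int_0^t\p_\tau v\di\tau'$ and averaging in $t\in(0,h)$, produces
\begin{equation*}
\|v\|_{L_2(\{\tau=-(b+1)R_2\e\})}^2\leqslant C_1 h^{-1}\|v\|_{L_2(S^\e_h)}^2+C_2 h\|\nabla v\|_{L_2(S^\e_h)}^2
\end{equation*}
with constants independent of $\e$. Taking $h=\min\{h_0,\d/C_2\}$ and bounding the norms over $S^\e_h$ by those over $\Om^\e$ gives (\ref{3.15}).

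The main (indeed the only) obstacle is the geometric check that $S^\e_h\subset\Om^\e$; once that is secured by the strict inequality $b>1$ and the bound $\eta(\e)\leqslant 1$, the analysis reduces to a standard textbook trace-inequality computation in a tubular neighbourhood of $\g$.
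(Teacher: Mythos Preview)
Your proof is correct. The paper does not prove this lemma at all: it simply cites the standard trace inequality \cite[Ch.\ I\!I, Sec.\ 2, Ineq.\ (2.38)]{Ld} and leaves it at that. What you have written is precisely the textbook derivation behind that citation --- the fundamental theorem of calculus in the normal direction, Cauchy--Schwarz, averaging over a tube of width $h$, and then choosing $h$ in terms of $\d$.

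Your explicit verification that the strip $S^\e_h$ lies inside $\Om^\e$ (using $\om_k^\e\subset B_{R_2\e}(y_k^\e)\subset\{|\tau|<R_2\e\}$ and $b+1>1$) is the only point that is not literally covered by quoting a fixed-domain inequality, and it is exactly what is needed to make the constant in \eqref{3.15} independent of $\e$. So your write-up is in fact more careful on this point than the paper's one-line citation.
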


The statement of this lemma follows from \cite[Ch. I\!I, Sec. 2,  Ineq. (2.38)]{Ld}.

For $c\leqslant \tau_0$ we denote $\Pi^c:=\{x: |\tau|<c\}$.

Next four lemmata provide local estimates in the vicinity of the holes.

\begin{lemma}\label{lm3.3b}
For any $\d>0$ there exists a constant $C(\d)>0$ such that for each $v\in W_2^1(\Om^\e)$ the uniform estimates
\begin{align}
\sum\limits_{k\in\mathds{M}^\e} \|v\|_{L_2(\widehat{B}^k_{b_*})}^2 &\leqslant C \bigg(\e\eta \sum\limits_{k\in\mathds{M}^\e} \|v\|_{L_2(\p B^k_{b_*})}^2
+ \e^2\eta^2 \sum\limits_{k\in\mathds{M}^\e} \|\nabla v\|_{L_2(\widehat{B}^k_{b_*})}^2
\bigg),
\label{3.7b}
\\
\sum\limits_{k\in\mathds{M}^\e} \|v\|_{L_2(\widehat{B}^k_{b_*})}^2 &\leqslant \e\eta^2(|\ln\eta|+1) \Big(\d\|\nabla v\|_{L_2(\Om^\e)}^2+C(\d)\|v\|_{L_2(\Om^\e)}^2\Big),\label{3.11}
\\
\|v\|_{L_2(\p\tht^\e)}^2&\leqslant \eta(|\ln\eta|+1)  \Big(\d\|\nabla v\|_{L_2(\Om^\e)}^2+C(\d)\|v\|_{L_2(\Om^\e)}^2\Big)\label{3.12}
\end{align}
hold true for sufficiently small $\e$.
\end{lemma}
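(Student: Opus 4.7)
The plan is to carry out all three estimates by a rescaling to unit size around each hole, handling the geometric uniformity via \ref{B1} (which pinches $\omega_k$ between $B_{R_1}(x^k)$ and $B_{R_2}(0)$). Throughout, for each $k$ I use the change of variables $y=(x-y_k^\e)/(\e\eta)$, so that $\omega_k^\e$ becomes $\omega_k\subset B_{R_2}(0)$, the circle $\p B^k_{b_*}$ becomes $\p B_{b_*R_2}(0)$, and $\widehat{B}^k_{b_*}$ becomes the unit-scale annular region $B_{b_*R_2}(0)\setminus\omega_k$. A 1D length rescales by $\e\eta$, a 2D area by $(\e\eta)^2$, while the Dirichlet energy is invariant. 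For \eqref{3.7b}, on each unit-scale region $B_{b_*R_2}\setminus\omega_k$ I apply the classical Poincar\'e--trace inequality $\|\tilde v\|_{L_2}^2\le C\bigl(\|\tilde v\|_{L_2(\p B_{b_*R_2})}^2+\|\nabla\tilde v\|_{L_2}^2\bigr)$, whose constant is uniform in $k$ thanks to \ref{B1}, and then scale back. Summation in $k$ yields \eqref{3.7b}.

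The main work is \eqref{3.11}. By \eqref{3.7b} it suffices to bound the boundary sum $\sum_k\|v\|_{L_2(\p B^k_{b_*})}^2$ by $\eta(|\ln\eta|+1)\bigl(\d\|\nabla v\|^2_{L_2(\Om^\e)}+C(\d)\|v\|^2_{L_2(\Om^\e)}\bigr)$. I introduce the \emph{fat} annular region $A_k:=B_{bR_2\e}(y_k^\e)\setminus B^k_{b_*}$ with inner radius $r_1:=b_*R_2\e\eta$ and outer radius $r_2:=bR_2\e$; by \ref{B1} the balls $B_{bR_2\e}(y_k^\e)$ are pairwise disjoint, so $\bigcup_k A_k\subset \Pi^{bR_2\e}$. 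In polar coordinates $(r,\phi)$ around $y_k^\e$ I start from
\begin{equation*}
|v(r_1,\phi)|^2\le 2|v(r,\phi)|^2+2|\ln(r/r_1)|\int_{r_1}^r\rho|\p_\rho v|^2\di\rho,
\end{equation*}
multiply by the weight $r\di r$ and integrate from $r_1$ to $r_2$; the left hand side acquires the factor $(r_2^2-r_1^2)/2\sim\e^2$, and Fubini plus $|\ln(r/r_1)|\le|\ln\eta|+C$ gives the sharp weighted log-trace
\begin{equation*}
\|v\|_{L_2(\p B^k_{b_*})}^2\le \frac{C\eta}{\e}\|v\|_{L_2(A_k)}^2+C\e\eta(|\ln\eta|+1)\|\nabla v\|_{L_2(A_k)}^2.
\end{equation*}
The crucial point here is that the radial weight $r\di r$ (rather than uniform averaging in $r$) produces the factor $\eta^2$ on the $L_2$ term in unit variables (equivalently, $\eta/\e$ in original ones) — a plain average would lose a factor $\eta$ and the rest of the proof would fail. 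Summing in $k$ and invoking the tubular neighborhood estimate $\|v\|_{L_2(\Pi^{bR_2\e})}^2\le C\e\bigl(\d\|\nabla v\|^2_{L_2(\Om^\e)}+C(\d)\|v\|^2_{L_2(\Om^\e)}\bigr)$, which follows from \eqref{3.15} of Lemma~\ref{lm1.6} by expressing $v(s,\tau)$ via $v(s,-(b+1)R_2\e)$ and an integral of $\p_\tau v$ and integrating in $\tau$, I get the desired bound on $\sum_k\|v\|_{L_2(\p B^k_{b_*})}^2$; substituting into \eqref{3.7b} and absorbing the residual $\e^2\eta^2\|\nabla v\|^2$ into $C\e\eta^2(|\ln\eta|+1)\d\|\nabla v\|^2$ (valid for $\e$ small relative to $\d(|\ln\eta|+1)$) yields \eqref{3.11}.

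For \eqref{3.12} I run the unit-scale trace inequality in the opposite direction: $\|\tilde v\|_{L_2(\p\omega_k)}^2\le C\bigl(\|\tilde v\|_{L_2(B_{b_*R_2}\setminus\omega_k)}^2+\|\nabla\tilde v\|_{L_2(B_{b_*R_2}\setminus\omega_k)}^2\bigr)$, again with constants uniform in $k$ by \ref{B1}. Rescaling and summing produces
\begin{equation*}
\|v\|_{L_2(\p\tht^\e)}^2\le \frac{C}{\e\eta}\sum_{k\in\mathds{M}^\e}\|v\|_{L_2(\widehat{B}^k_{b_*})}^2+C\e\eta\|\nabla v\|_{L_2(\Om^\e)}^2,
\end{equation*}
and plugging in \eqref{3.11} gives \eqref{3.12} after absorbing the $C\e\eta\|\nabla v\|^2$ term. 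The main obstacle in the whole argument is the sharpness of the intermediate log-trace inequality on $A_k$: one must pair it with the correct $r\di r$ weight to reach the $\eta^2$ prefactor on the $L_2$ side, since any cruder averaging over the annulus of ratio $1/\eta$ leads to $\eta$ instead and destroys the final $\e\eta^2(|\ln\eta|+1)$ scaling demanded by \eqref{3.11} and \eqref{3.12}.
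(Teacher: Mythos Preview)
Your argument for \eqref{3.11} via the sharp log--trace inequality on the fat annulus $A_k$ together with the thin--layer bound on $\Pi^{bR_2\e}$ is correct and is essentially the paper's own route (compare your estimate with the paper's (3.20) and (3.21), which are obtained by the same radial Cauchy--Schwarz with the $\frac{dt}{t}$ weight and the same integration across the tubular neighbourhood).

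The gap is in your treatment of \eqref{3.7b} and \eqref{3.12}. You invoke, after rescaling to unit size, a Poincar\'e--trace inequality
\[
\|\tilde v\|_{L_2(B_{b_*R_2}\setminus\omega_k)}^2\le C\Big(\|\tilde v\|_{L_2(\p B_{b_*R_2})}^2+\|\nabla\tilde v\|_{L_2(B_{b_*R_2}\setminus\omega_k)}^2\Big)
\]
and a trace inequality $\|\tilde v\|_{L_2(\p\omega_k)}^2\le C\big(\|\tilde v\|_{L_2}^2+\|\nabla\tilde v\|_{L_2}^2\big)$, asserting that the constants are uniform in $k$ ``thanks to \ref{B1}''. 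But \ref{B1} only pinches $\omega_k$ between two balls and bounds $|\p\omega_k|$; it gives no uniform Lipschitz (or curvature) control on $\p\omega_k$, and hence neither a uniform Poincar\'e constant on the varying domains $B_{b_*R_2}\setminus\omega_k$ nor a uniform trace constant on $\p\omega_k$ follows. The introduction of the paper is explicit that it does \emph{not} assume a uniform extension operator inside the holes, which is the usual device one would use to reduce to a fixed domain; instead it postulates \ref{B2}.

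The paper's proof uses \ref{B2} exactly at these two places. For the trace on $\p\omega_k^\e$ it tests the divergence identity for the uniformly $L_\infty$--bounded field $X_k^\e$ against $|v|^2$ to obtain (3.5b), which transfers the trace on $\p\omega_k^\e$ to the trace on the fixed circle $\p B^k_{b_*}$ plus a controlled volume term; this is what replaces your second trace inequality with a constant that is uniform by hypothesis. For \eqref{3.7b} it integrates by parts against the linear function $\zeta_1^k$ (whose sup--norm is $O(\e\eta)$) to bound the volume norm by the two boundary traces plus a gradient term, and then feeds in (3.5b). In both steps the uniformity comes from the assumed uniform $L_\infty$ bound on $X_k$, not from \ref{B1}. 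If you want to keep your rescaling strategy, you should replace the appeal to an abstract Poincar\'e--trace inequality by this divergence--field argument, or else add and justify an assumption that makes the constants in your unit--scale inequalities uniform in $k$.
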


\begin{proof}
Assume first that $v\in C^1(\overline{\Om^\e})$. Thanks to Assumption \ref{B2}, in the integral identity for $X_k^\e(x):=X_k(z^k\e^{-1}\eta^{-1})$, $\z^k=(\z^k_1, \z^k_2):=x-y_k^\e$, we can take $|v|^2$ as the test-function,
\begin{equation}\label{3.10b}
\begin{aligned}
\|v\|_{L_2(\p\om_k^\e)}^2-
(\vp_k^\e v,v)_{L_2(\p B^k_{b_*})}
= -\int\limits_{\widehat{B}^k_{b_*}} X_k^\e\cdot\nabla |v|^2\di x,\quad \vp_k^\e:=\vp_k(\z^k\e^{-1}\eta^{-1}).
\end{aligned}
\end{equation}
By Assumptions \ref{B1}, \ref{B2}  we thus obtain
\begin{equation}\label{3.5b}
\begin{aligned}
  \|v\|_{L_2(\p\om_k^\e)}^2\leqslant C \bigg(&  \|v\|_{L_2(\p B^k_{b_*})}^2
  +   \|v\|_{L_2( \widehat{B}^k_{b_*})}
 \|\nabla v\|_{L_2( \widehat{B}^k_{b_*})}\bigg).
\end{aligned}
\end{equation}
Since $C^1(\overline{\Om^\e})$ is dense in $W_2^1(\Om^\e)$, the latter estimate is valid for each $v\in W_2^1(\Om^\e)$.

Let $\nu_1$ be the first coordinate of normal vector $\nu$ to $\p\om^\e_k$. We integrate by parts:
\begin{equation*}
\int\limits_{\widehat{B}^k_{b_*}} |v|^2\di x= 
 \int\limits_{\widehat{B}^k_{b_*}} |v|^2\frac{\p \z^k_1}{\p x_1}\di x
=\int\limits_{\p B^k_{b_*}} |v|^2 \frac{(\z^k_1)^2}{|\z^k|}\di s - \int\limits_{\p\om^\e_k} |v|^2\z^k_1\nu_1\di s
 - \int\limits_{\widehat{B}^k_{b_*}} \z^k_1\frac{\p|v|^2}{\p x_1}\di x.
\end{equation*}
As in (\ref{3.10b}), (\ref{3.5b}), we first make the integration for $v\in C^1(\overline{\Om^\e})$ and then we extend the resulting idenity for each $v\in W_2^1(\Om^\e)$. It implies
\begin{equation*}
\|v\|_{L_2(\widehat{B}^k_{b_*})}^2\leqslant C
 \e\eta \bigg( \|v\|_{L_2(\p B^k_{b_*})}^2 + \|v\|_{L_2(\p\om^\e_k)}^2
  + \|v\|_{L_2(\widehat{B}^k_{b_*})}\|\nabla v\|_{L_2(\widehat{B}^k_{b_*})}
\bigg).
\end{equation*}
This estimate, (\ref{3.5b}), and Cauchy-Schwarz inequality yield
\begin{align}
&\|v\|_{L_2(\widehat{B}^k_{b_*})}^2 \leqslant C \e\eta\Big(\|v\|_{L_2(\p B^k_{b_*})}^2+\e \eta
\|\nabla v\|_{L_2(\widehat{B}^k_{b_*})}^2\Big),\label{3.18}
\\
&\|v\|_{L_2(\p\om_k^\e)}^2\leqslant C\Big( \|v\|_{L_2(\p B^k_{b_*})}^2 + \e\eta \|\nabla v\|_{L_2(\widehat{B}^k_{b_*})}^2
\Big).\label{3.14}
\end{align}
Summing up  inequality (\ref{3.18}) w.r.t. $k$, we arrive at (\ref{3.7b}).

Let us  estimate $\|v\|_{L_2(\p B^k_*)}$. In  order to do it, we follow the ideas employed in the proof of Lemma~3.2 in \cite{OSHY}. We introduce an infinitely differentiable cut-off function $\chi_1=\chi_1(t)$ being one as $t<1$ and vanishing as $t>2$. We have
\begin{equation*}
v(x)=v(x)\chi_1\left(\frac{|\z^k|R_2^{-1}\e^{-1}-1}{b-1}\right)\quad\text{as}\quad x\in B^k_{b}\setminus B^k_{1}.
\end{equation*}
Let $(r,\vp)$ be polar coordinates centered at $y_k^\e$. By  Assumption \ref{B1}, the ball $B_{(2b-1)R_2\e}(y_k^\e)$ does not intersect with $\om_i^\e$, $i\not=k$. Hence, for $R_2\e\eta\leqslant r\leqslant b R_2\e\eta$ Cauchy-Schwarz inequality implies
\begin{align*}
|v(r,\vp)|^2&= \left|\int\limits^{(2b-1)R_2\e}_r \frac{\p}{\p t}\left(v(t,\vp) \chi_1\left(\frac{t R_2^{-1}\e^{-1}-1}{b-1}\right)\right)\di t\right|^2
\\
&\leqslant \int\limits_{r}^{(2b-1)R_2\e} \frac{\di t}{t} \int\limits_{r}^{(2b-1)R_2\e} \left|\frac{\p}{\p t}\left(v(t,\vp)\chi_1\left(\frac{t R_2^{-1}\e^{-1}-1}{b-1}\right)
\right) \right|^2t\di t
\\
&\leqslant C(|\ln\eta|+1) \int\limits_{R_2\e\eta}^{(2b-1)R_2\e} \left( \left|\frac{\p v}{\p t}(t,\vp)\right|^2+\e^{-2}
|v(t,\vp)|^2
\right)t\di t.
\end{align*}
We integrate this estimate over $\p B^k_{b_*}$ and get
\begin{equation}\label{3.20}
\|v\|_{L_2(\p B^k_{b_*})}^2 \leqslant C\e\eta(|\ln\eta|+1) \Big(
\|\nabla v\|_{L_2(B_{(2b-1)R_2\e}(y_k^\e)\setminus B^k_{1})}^2
 + \e^{-2} \|v\|_{L_2(B_{(2b-1)R_2\e}(y_k^\e)\setminus B^k_{1})}^2\Big).
\end{equation}
To estimate the last term in the right hand side of the obtained inequality, we observe that
for sufficiently small $\d$ and $|\tau|<\tau_0/4$,
\begin{equation*}
|v(\tau,s)|^2=\int\limits_{\pm\frac{\d\tau_0}{2}}^{\tau} \frac{\p}{\p t} \left(|v(\tau,s)|^2\chi_1\left(\frac{4|t|}{\d\tau_0}\right)\right)\di t,\quad \pm\tau>0,
\end{equation*}
and by Cauchy-Schwarz inequality
\begin{equation}\label{3.4a}
|v(\tau,s)|^2\leqslant \frac{\d^2\tau_0^2}{2} \int\limits_{\pm\frac{\d\tau_0}{2}}^{\tau} \left|\frac{\p v}{\p\tau}(t,s)\right|^2\di t+ C(\d) \int\limits_{\pm\frac{\d\tau_0}{2}}^{\tau} |v(t,s)|^2\di t,\quad \pm\tau>0.
\end{equation}
Integrating this estimate over $\Pi^{2b R_2\e}\setminus\bigcup\limits_{k\in\mathds{M}^\e} B^k_1$, we have
3.21\begin{equation}\label{3.21}
\|v\|_{L_2(\Pi^{2b R_2\e}\setminus\bigcup\limits_{k\in\mathds{M}^\e} B^k_1)}^2 \leqslant \e \Big(\d\|\nabla v\|_{L_2(\Om^\e)}^2 + C(\d)\|v\|_{L_2(\Om^\e)}^2\Big).
\end{equation}
Now we substitute (\ref{3.20}) into (\ref{3.18}), sum up the result over $k\in\mathds{M}^\e$, and apply then (\ref{3.21}). It leads us to (\ref{3.11}). Inequality (\ref{3.12}) follows from (\ref{3.14}), (\ref{3.20}), (\ref{3.21}).
\end{proof}

\begin{lemma}\label{lm3.7}
For each $u,v\in W_2^1(\Om^\e)$ the uniform estimate
\begin{align*}
\sum\limits_{k\in\mathds{M}^\e} \bigg| \frac{|\p\om_k|}{\pi(b+1)R_2} (au,v)_{L_2(\p B^k_{b_*})}  &- (au,v)_{L_2(\p\om_k^\e)}
\bigg|
\leqslant C\e^{\frac{1}{2}}\eta \big(|\ln\eta|^{\frac{1}{2}}+1\big) \|u\|_{W_2^1(\Om^\e)} \|v\|_{W_2^1(\Om^\e)}
\end{align*}
holds true for sufficiently small $\e$.
\end{lemma}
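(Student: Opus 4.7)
The starting point is the integral identity from \ref{B2} applied to $w=au\bar v$,
\[
\int_{\p\om_k^\e} w\,ds - \int_{\p B^k_{b_*}}\vp_k^\e w\,ds = -\int_{\widehat{B}^k_{b_*}} X_k^\e\cdot\nabla w\,dx,
\]
established exactly as in the proof of Lemma~\ref{lm3.3b}. Setting $c_k:=\frac{|\p\om_k|}{\pi(b+1)R_2}$, the mass-preservation identity $c_k\,|\p B^k_{b_*}|=\e\eta|\p\om_k|=|\p\om_k^\e|$ combined with~(\ref{2.26}) gives $\int_{\p B^k_{b_*}}(c_k-\vp_k^\e)\,ds=0$, so for any constant $c$,
\[
I_k:=c_k(au,v)_{L_2(\p B^k_{b_*})}-(au,v)_{L_2(\p\om_k^\e)} = \int_{\p B^k_{b_*}}(c_k-\vp_k^\e)(au\bar v-c)\,ds + \int_{\widehat{B}^k_{b_*}} X_k^\e\cdot\nabla(au\bar v)\,dx.
\]

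The volume term is handled using $\|X_k^\e\|_{L_\infty}\leqslant C$ from \ref{B2} together with the pointwise estimate $|\nabla(au\bar v)|\leqslant C\bigl(|uv|+|\nabla u||v|+|u||\nabla v|\bigr)$, in which $a\in\Hinf^1$ enters via $\|\nabla a\|_{L_\infty}\leqslant C$. Cauchy--Schwarz then yields
\[
\Bigl|\int_{\widehat{B}^k_{b_*}} X_k^\e\cdot\nabla(au\bar v)\,dx\Bigr| \leqslant C\bigl(\|u\|_{L_2(\widehat{B}^k_{b_*})}\|v\|_{L_2(\widehat{B}^k_{b_*})}+\|\nabla u\|_{L_2(\widehat{B}^k_{b_*})}\|v\|_{L_2(\widehat{B}^k_{b_*})}+\|u\|_{L_2(\widehat{B}^k_{b_*})}\|\nabla v\|_{L_2(\widehat{B}^k_{b_*})}\bigr).
\]
For the surface term I choose $c$ to be the mean of $au\bar v$ over the \emph{shape-independent} annulus $A^k:=B^k_{b_*}\setminus B_{R_2\e\eta}(y_k^\e)\subset\widehat{B}^k_{b_*}$, whose reference region $\{R_2<|y|<b_*R_2\}$ does not depend on $\om_k$ by \ref{B1}. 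The scale-invariant $W^{1,1}$ Poincar\'e--trace inequality on this fixed annulus gives $\|au\bar v-c\|_{L_1(\p B^k_{b_*})}\leqslant C\|\nabla(au\bar v)\|_{L_1(A^k)}\leqslant C\|\nabla(au\bar v)\|_{L_1(\widehat{B}^k_{b_*})}$, producing the same majorant as for the volume term.

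Summing in $k$ via Cauchy--Schwarz, the disjointness of the $\widehat{B}^k_{b_*}$ from \ref{B1} yields $\sum_k\|\nabla u\|_{L_2(\widehat{B}^k_{b_*})}^2\leqslant\|\nabla u\|_{L_2(\Om^\e)}^2$, while Lemma~\ref{lm3.3b}, estimate~(\ref{3.11}), furnishes $\bigl(\sum_k\|u\|_{L_2(\widehat{B}^k_{b_*})}^2\bigr)^{1/2}\leqslant C\e^{\frac{1}{2}}\eta\bigl(|\ln\eta|^{\frac{1}{2}}+1\bigr)\|u\|_{W_2^1(\Om^\e)}$, with analogues for $v$; combining these produces exactly the announced factor. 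The main subtlety is that $W_2^1(\Om^\e)\not\hookrightarrow L_\infty(\Om^\e)$ in two dimensions, so $\nabla(au\bar v)$ lies only in $L_1$ locally; this forces the whole argument to be carried out in $L_1$-based norms, and it is precisely this that dictates the use of the shape-independent annulus $A^k$ (rather than $\widehat{B}^k_{b_*}$) so that the Poincar\'e--trace constant is uniform in $k$ without any assumption on the shape of $\om_k$ beyond \ref{B1}. The initial identity is extended from $C^1(\overline{\Om^\e})$ to general $u,v\in W_2^1(\Om^\e)$ by density, exactly as in Lemma~\ref{lm3.3b}.
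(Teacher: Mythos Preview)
Your proof is correct and reaches the stated bound, but by a route that differs from the paper's. The paper first establishes, exactly as you do, the divergence identity coming from \ref{B2} to replace $(au,v)_{L_2(\p\om_k^\e)}$ by $(a\vp_k^\e u,v)_{L_2(\p B^k_{b_*})}$ up to the volume error. From there, however, it decomposes $au$ and $v$ \emph{separately} into their means $\la au\ra_k$, $\la v\ra_k$ over the \emph{outer} annulus $B^k_{b}\setminus B^k_{b_*}$ and their fluctuations, and applies an $L_2$ Poincar\'e inequality to each fluctuation on that fixed rescaled annulus. The cross terms involving a fluctuation pick up the small factor from Poincar\'e, while the mean-times-mean contribution reduces to $\e\eta|\p\om_k|\la au\ra_k\la v\ra_k$ via~(\ref{2.26}); the same decomposition is carried out for $\frac{|\p\om_k|}{\pi(b+1)R_2}(au,v)_{L_2(\p B^k_{b_*})}$, and the two mean-times-mean terms cancel exactly.

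Your approach is more direct: you treat the product $au\bar v$ as a single $W^{1,1}$ function, subtract its mean once over the inner annulus $A^k$, and invoke one $W^{1,1}$ Poincar\'e--trace inequality; the cancellation of means is built in at the outset through the identity $\int_{\p B^k_{b_*}}(c_k-\vp_k^\e)\,ds=0$. This avoids the bookkeeping of four cross terms, at the cost of working in $L_1$-based spaces, which as you correctly observe is forced in two dimensions since $u\bar v\notin W_2^1$ in general. The paper's version stays entirely in $L_2$, which is perhaps more standard but longer. Both arguments rest on the same two pillars, the divergence identity from \ref{B2} and the localization estimate~(\ref{3.11}), and both use a shape-independent reference annulus (on opposite sides of $\p B^k_{b_*}$) so that the Poincar\'e constant is uniform in $k$.
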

\begin{proof}
In the same way how (\ref{3.10b}), (\ref{3.5b}) were proven, one can check easily one more estimate
\begin{equation}\label{3.23}
\sum\limits_{k\in\mathds{M}^\e} \bigg|
(a u,v)_{L_2(\p\om_k^\e)}-
(a\vp_k^\e u,v)_{L_2(\p B^k_{b_*})}
\bigg|
\leqslant  C\e^{\frac{1}{2}}\eta \big(|\ln\eta|^{\frac{1}{2}}+1\big) \|u\|_{W_2^1(\Om^\e)} \|v\|_{W_2^1(\Om^\e)}.
\end{equation}

Denote
\begin{equation*}
\la v \ra_k:=\frac{4}{\pi(3b^2-2b-1)\e^2\eta^2} \int\limits_{B^k_{b}\setminus B^k_{b_*}} v\di x, \quad v^\bot:=v-\la v \ra_k.
\end{equation*}
It is clear that $\int\limits_{B^k_{b}\setminus B^k_{b_*}} v^\bot \di x=0$. Then we rescale the variables $x\mapsto \z^k\e^{-1}\eta^{-1}$ and employ the Poincar\'e inequality
\begin{equation*}
\|\psi\|_{L_2(\p B_{b_*R_2}(0))}\leqslant C\|\nabla \psi\|_{L_2(B_{b R_2}(0)\setminus B_{b_*R_2}(0))},
\end{equation*}
which is valid for each $\psi\in W_2^1(B_{b R_2}(0)\setminus B_{b_*R_2}(0))$ satisfying $\int\limits_{B_{b R_2}(0)\setminus B_{b_*R_2}(0))}\psi\di x=0$. It leads us to the estimate
\begin{equation*}
\|v^\bot\|_{L_2(\p B_{b_*}^k)} \leqslant C\e^{\frac{1}{2}}\eta^{\frac{1}{2}} \|\nabla v\|_{L_2(B_{b}^k\setminus B_{b_*}^k)},
\end{equation*}
and the same is valid for $(au)^\bot:=au-\la au\ra_k$. Hence, we have
\begin{equation*}
(a\vp_k^\e u,v)_{L_2(\p B_{b_*}^k)} =
 \la au \ra_k \la v \ra_k \int\limits_{\p B_{b_*}^k} \vp_k^\e \di s
+\big(\vp_k^\e (au)^\bot, v\big)_{L_2(\p B_{b_*}^k)} + (\vp_k^\e, v^\bot)_{L_2(\p B_{b_*}^k)} \la au \ra_k.
\end{equation*}
Since by \ref{B2}
\begin{equation*}
 \int\limits_{\p B_{b_*}^k} \vp_k^\e \di s=\e\eta \int\limits_{\p B_{b_*R_2}(0)} \vp_k\di s=\e\eta|\p\om_k|,
\end{equation*}
it follows from (\ref{3.11}), (\ref{3.12}) with $\om_k=B_{b_*R_2}(0)$, (\ref{3.21}) that
\begin{align*}
\sum\limits_{k\in\mathds{M}^\e} \left|(a\vp_k^\e u,v)_{\p B_{b_*}^k} - \e\eta |\p\om_k| \la au \ra_k \la v \ra_k \right|\leqslant  C\e^{\frac{1}{2}}\eta \big(|\ln\eta|^{\frac{1}{2}}+1\big) \|u\|_{W_2^1(\Om^\e)} \|v\|_{W_2^1(\Om^\e)}.
\end{align*}
Completely in the same way one can show  that
\begin{align*}
\sum\limits_{k\in\mathds{M}^\e} \bigg|\frac{|\p\om_k|}{\pi(b+1)R_2}(a u,v)_{\p B_{b_*}^k} &- \e\eta |\p\om_k| \la au \ra_k \la v \ra_k \bigg|
\leqslant  C\e^{\frac{1}{2}}\eta \big(|\ln\eta|^{\frac{1}{2}}+1\big) \|u\|_{W_2^1(\Om^\e)} \|v\|_{W_2^1(\Om^\e)}.
\end{align*}
Two last inequalities and (\ref{3.23}) prove the lemma.
\end{proof}

Lemmata~\ref{lm1.6},~\ref{lm3.3b} imply

\begin{lemma}\label{lm1.3}
Given arbitrary $\b\in\Hinf^1(\g)$, for each $u\in W_2^1(\Om)$, $v\in W_2^1(\Om^\e)$ the estimates
\begin{align*}
&
\|u\|_{W_2^1(\Om)}^2\leqslant C\Big(\fao(u,u)+(\beta u,u)_{L_2(\g)}+\|u\|_{L_2(\Om)}^2\Big),
\nonumber
\\
&
\|v\|_{W_2^1(\Om^\e)}^2\leqslant C\Big(\fae(v,v)+
(av,v)_{L_2(\p\tht_\e^0)}+\|v\|_{L_2(\Om^\e)}^2\Big)
\end{align*}
hold true.
\end{lemma}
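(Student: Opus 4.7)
The proof of both estimates follows the standard Gårding-type coercivity argument: the ellipticity condition (\ref{2.3}) yields a lower bound on the principal part, Cauchy's inequality with a small parameter absorbs the lower-order terms, and the boundary trace terms are controlled by the auxiliary trace estimates of Lemmata~\ref{lm1.6} and \ref{lm3.3b}.

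For the first estimate, I would start by using the ellipticity condition (\ref{2.3}) to obtain
\begin{equation*}
c_2 \|\nabla u\|_{L_2(\Om)}^2 \leqslant \sum_{i,j}(A_{ij}\p_j u, \p_i u)_{L_2(\Om)} = \fao(u,u) - \sum_j \big[(A_j \p_j u, u) + (u,A_j\p_j u)\big]_{L_2(\Om)} - (A_0 u,u)_{L_2(\Om)}.
\end{equation*}
Because $A_j, A_0\in\Hinf^1(\Om)$, Cauchy's inequality with a small parameter $\d>0$ gives $(c_2-\d)\|\nabla u\|_{L_2(\Om)}^2 \leqslant \fao(u,u) + C(\d)\|u\|_{L_2(\Om)}^2$. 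To bring the form $\fao(u,u)+(\b u,u)_{L_2(\g)}$ onto the right-hand side, I would rewrite $\fao(u,u)=[\fao(u,u)+(\b u,u)_{L_2(\g)}]-(\b u,u)_{L_2(\g)}$ and estimate the boundary term by means of $|(\b u,u)_{L_2(\g)}|\leqslant C\|u\|_{L_2(\g)}^2$ (since $\b\in\Hinf^1(\g)\subset L_\infty(\g)$), followed by the first inequality of Lemma~\ref{lm1.6}, namely $\|u\|_{L_2(\g)}^2\leqslant \d\|\nabla u\|_{L_2(\Om)}^2+C(\d)\|u\|_{L_2(\Om)}^2$. After two absorption steps and choosing $\d$ a sufficiently small multiple of $c_2$, adding $\|u\|_{L_2(\Om)}^2$ to both sides gives the first estimate.

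The second estimate is proved by the same scheme applied on $\Om^\e$. First, ellipticity and Cauchy with a small parameter reduce the claim to bounding the boundary integral: one obtains $(c_2-\d)\|\nabla v\|_{L_2(\Om^\e)}^2\leqslant \fae(v,v)+C(\d)\|v\|_{L_2(\Om^\e)}^2$. Then, since $a\in\Hinf^1(\{x:|\tau|<\tau_0\})$, one has $|(av,v)_{L_2(\p\tht^\e)}|\leqslant C\|v\|_{L_2(\p\tht^\e)}^2$, and inequality~(\ref{3.12}) of Lemma~\ref{lm3.3b} together with the fact that $\eta(|\ln\eta|+1)$ is bounded on $(0,1]$ produces $\|v\|_{L_2(\p\tht^\e)}^2\leqslant C(\d\|\nabla v\|_{L_2(\Om^\e)}^2+C(\d)\|v\|_{L_2(\Om^\e)}^2)$. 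Writing $\fae(v,v)=[\fae(v,v)+(av,v)_{L_2(\p\tht^\e_0)}]-(av,v)_{L_2(\p\tht^\e_0)}$ and absorbing as above yields the second estimate.

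No step presents a genuine obstacle. The only care needed is the order in which the two absorption steps are performed: one must first fix $\d$ small enough so that $c_2-\d>0$ survives from the ellipticity bound, and then choose the small parameter used to control the trace term even smaller so that the final coefficient of $\|\nabla u\|_{L_2(\Om)}^2$ (respectively $\|\nabla v\|_{L_2(\Om^\e)}^2$) on the left-hand side remains strictly positive; this is purely a matter of bookkeeping.
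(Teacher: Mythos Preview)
Your argument is correct and is exactly the approach the paper has in mind: the paper's own proof consists of the single sentence ``Lemmata~\ref{lm1.6},~\ref{lm3.3b} imply'', and you have simply written out the standard G\aa rding absorption that this sentence encodes. The use of ellipticity~(\ref{2.3}) for the principal part, Cauchy with small parameter for the first-order and zeroth-order terms, Lemma~\ref{lm1.6} for the $\g$-trace, and inequality~(\ref{3.12}) (with the observation that $\eta(|\ln\eta|+1)$ is bounded on $(0,1]$) for the $\p\tht^\e$-trace is precisely what is intended.
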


We let $\widetilde{\g}:=\{x: \tau=-(b+1)R_2\e,\ s\in \mathds{R}\}$ and by \ref{B1} we see that
$B^k_{b}\cap\widetilde{\g}=\emptyset$
for each $k\in\mathds{M}^\e$ and sufficiently small $\e$.

\begin{lemma}\label{lm7.2}
Let $u\in W_2^2\big(\Om\setminus(\g\cup\widetilde{\g})\big) \cap W_2^1(\Om)$. Then the uniform in $\e$, $k$, and $u$ estimate
\begin{equation*}
\left(\sum\limits_{k\in\mathds{Z}} \|u\|_{C(\overline{B_{R_2\e}(y^k_\e)})}^2
\right)^{\frac{1}{2}}\leqslant C\e^{-\frac{1}{2}}\|u\|_{W_2^2 (\Om\setminus(\g\cup\widetilde{\g}))}
\end{equation*}
holds true.
\end{lemma}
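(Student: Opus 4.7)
The plan is to pass to the tubular coordinates $(s,\tau)$ in $\Pi^{\tau_0}$ and reduce the desired two-dimensional estimate to a one-dimensional argument on $\g$. For sufficiently small $\e$, every ball $\overline{B_{R_2\e}(y_k^\e)}$ lies inside the coordinate chart, its $s$-projection $I_k$ is an interval of length of order $\e$ centered at $s_k^\e$, and by \ref{B1} together with $b>1$ the intervals $I_k$ are pairwise disjoint. I introduce the auxiliary one-variable function
\begin{equation*}
\Phi(s):=\int\limits_{-\tau_0}^{\tau_0}\bigl(|u(s,\tau)|^2+|\p_\tau u(s,\tau)|^2\bigr)\di\tau.
\end{equation*}
Since $u\in W_2^1(\Om)$, for a.e.\ $s$ the slice $u(s,\cdot)$ belongs to $W_2^1(-\tau_0,\tau_0)$, and a one-dimensional Sobolev-type inequality (write $|u(s,\tau)|^2\leqslant 2|u(s,\tau_1)|^2+2\tau_0\int_{-\tau_0}^{\tau_0}|\p_\tau u(s,t)|^2\di t$ and average over $\tau_1\in(-\tau_0,\tau_0)$) yields the pointwise bound $|u(s,\tau)|^2\leqslant C\Phi(s)$ for all $|\tau|<\tau_0$. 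Because $u$ is continuous on $\overline{B_{R_2\e}(y_k^\e)}$ (the $W_2^2$-regularity on each side of $\g$ combined with matching traces from $u\in W_2^1(\Om)$ makes $u$ continuous through $\g$, and the ball does not meet $\widetilde\g$), this upgrades to
\begin{equation*}
\|u\|_{C(\overline{B_{R_2\e}(y_k^\e)})}^2\leqslant C\sup_{s\in I_k}\Phi(s).
\end{equation*}

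Second, I would verify that $\Phi$ is absolutely continuous on $\g$ with weak derivative
\begin{equation*}
\Phi'(s)=2\int\limits_{-\tau_0}^{\tau_0}\bigl(\RE\bigl(u\,\overline{\p_s u}\bigr)+\RE\bigl(\p_\tau u\,\overline{\p_s\p_\tau u}\bigr)\bigr)\di\tau.
\end{equation*}
The only subtle point is the meaning of the mixed derivative $\p_s\p_\tau u$ across the interfaces $\g$ and $\widetilde\g$: both are level sets of $\tau$ in the local chart, so $\p_s$ is tangential to them; differentiating $\p_\tau u\in L_2(\Pi^{\tau_0})$ in $s$ therefore produces no singular contribution on $\g\cup\widetilde\g$, and the distributional $\p_s\p_\tau u$ agrees on each smooth piece with the classical mixed derivative, which lies in $L_2$ by $u\in W_2^2(\Om\setminus(\g\cup\widetilde\g))$. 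Two applications of Cauchy--Schwarz then give
\begin{equation*}
\int|\Phi'|\di s\leqslant 2\|u\|_{L_2(\Pi^{\tau_0})}\|\p_s u\|_{L_2(\Pi^{\tau_0})}+2\|\p_\tau u\|_{L_2(\Pi^{\tau_0})}\|\p_s\p_\tau u\|_{L_2(\Pi^{\tau_0}\setminus(\g\cup\widetilde\g))}\leqslant C\|u\|_{W_2^2(\Om\setminus(\g\cup\widetilde\g))}^2,
\end{equation*}
while by Fubini $\int\Phi\di s\leqslant\|u\|_{W_2^1(\Om)}^2$.

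Finally I would combine these ingredients by the elementary inequality $\sup_{I_k}\Phi\leqslant|I_k|^{-1}\int_{I_k}\Phi\di s+\int_{I_k}|\Phi'|\di s$, which follows from $\Phi(s_0)\leqslant\Phi(s_1)+\int_{s_1}^{s_0}|\Phi'|\di t$ by averaging in $s_1\in I_k$. Summing over the disjoint intervals $I_k$ and using that $|I_k|$ is of order $\e$, I arrive at
\begin{equation*}
\sum_k\|u\|_{C(\overline{B_{R_2\e}(y_k^\e)})}^2\leqslant C\sum_k\sup_{I_k}\Phi\leqslant C\e^{-1}\int\Phi\di s+C\int|\Phi'|\di s\leqslant C\e^{-1}\|u\|_{W_2^2(\Om\setminus(\g\cup\widetilde\g))}^2,
\end{equation*}
and taking square roots gives the lemma. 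The main technical obstacle I anticipate is the careful justification of the formula for $\Phi'$ through the two interfaces $\g$ and $\widetilde\g$; once that is settled, the rest is a routine one-dimensional Sobolev and averaging exercise.
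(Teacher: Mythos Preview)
Your argument is correct and takes a genuinely different route from the paper's proof.

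The paper proceeds by a two-dimensional rescaling argument: it covers each ball by the two half-boxes $T_k^\pm(\e)=\{x:\,0<\pm\tau<bR_2\e,\ |s-s_k^\e|<bR_2\e\}$, rescales $(s,\tau)$ by $\e^{-1}$ to a fixed box, and applies the planar Sobolev embedding $W_2^2\hookrightarrow C$. Undoing the scaling gives
\[
\|u\|_{C(\overline{T_k^\pm(\e)})}^2\leqslant C\Big(\|\nabla u\|_{W_2^1(T_k^\pm(\e))}^2+\e^{-2}\|u\|_{L_2(T_k^\pm(\e))}^2\Big),
\]
and after summing in $k$ the dangerous term $\e^{-2}\sum_k\|u\|_{L_2(T_k^\pm(\e))}^2$ is bounded by $C\e^{-1}\|u\|_{W_2^1(\Om)}^2$ via the thin-layer inequality~(\ref{3.4a}). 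By working separately on $T_k^+$ and $T_k^-$ (both of which avoid $\widetilde\g$), the paper never has to differentiate across the interfaces.

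Your proof instead performs two one-dimensional reductions: first a Sobolev embedding in $\tau$ yields $|u(s,\tau)|^2\leqslant C\Phi(s)$, and then a $W^{1,1}$ averaging inequality in $s$ gives $\sup_{I_k}\Phi\leqslant |I_k|^{-1}\int_{I_k}\Phi+\int_{I_k}|\Phi'|$. The price you pay is having to justify that $\p_s\p_\tau u\in L_2(\Pi^{\tau_0})$ globally; your observation that $\g$ and $\widetilde\g$ are level sets of $\tau$, so that the tangential derivative $\p_s$ of $\p_\tau u$ picks up no singular layer even if $\p_\tau u$ jumps across them, is exactly the right point and makes the argument go through. What you gain is that only one-dimensional embeddings are used, and the $\e^{-1}$ factor emerges transparently from the $|I_k|^{-1}$ in the averaging bound rather than from tracking scaling weights of a 2D embedding.
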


\begin{proof}
We first observe that by standard embedding theorems \cite[Ch. I\!I\!I, Sect. 6, Thm. 3]{Mi} function $u$ is continuous in $\Om_-$
and $\{x:\, -(b+1)R_2\e\leqslant\tau\leqslant 0\}$ and therefore it is continuous in each of balls $B_{R_2\e}(y_k^\e)$.

We denote
$T_k^\pm(\e):=\{x:\, 0<\pm\tau<b R_2\e,\, |s-s_k^\e|<b R_2^\e\}$.
We introduce new variables $\widetilde{\tau}:=\tau\e^{-1}$, $\widetilde{s}:=(s-s_k^\e)\e^{-1}$. Then domain $T_k^\pm(\e)$ is mapped onto $T^\pm:=\{(\widetilde{\tau},\widetilde{s}):\, 0<\pm\widetilde{\tau}<bR_2,\, |\widetilde{s}|<b R_2\}$. Given function $u\in W_2^2(\Om\setminus(\g\cup\widetilde{\g}))$, in the vicinity of curve $\g$ we rewrite it in terms of variables $(\widetilde{\tau},\widetilde{s})$: $v(\widetilde{\tau},\widetilde{s})=u(x)$, and we see that $v\in W_2^2(T^\pm)$. By standard embedding theorems we have the estimate
$\|v\|_{C(\overline{T^\pm})}^2\leqslant C\|v\|_{W_2^1(T^\pm)}^2$.
Then we rewrite this inequality in variables $x$. At that, thanks to the assumptions for curve $\g$, all the coefficients and Jacobians appearing while rewriting derivatives and integrals are bounded uniformly in $\e$, $k$, and $x$. The final estimate is as follows:
\begin{equation*}
\|u\|_{C(\overline{T_k^\pm(\e)})}^2\leqslant C\Big( \|\nabla u\|_{W_2^1(T_k^\pm(\e))}^2+\e^{-2} \|u\|_{L_2(T_k^\pm(\e))}^2\Big).
\end{equation*}
Hence,
\begin{equation}\label{3.23a}
\sum\limits_{k\in\mathds{Z}}\|u\|_{C(\overline{T_k^\pm(\e)})}^2\leqslant C\sum\limits_{k\in\mathds{Z}}\Big( \|\nabla u\|_{W_2^1(T_k^\pm(\e))}^2+\e^{-2} \|u\|_{L_2(T_k^\pm(\e))}^2\Big).
\end{equation}
We integrate inequality (\ref{3.4a}) with $\d=1$ over $\bigcup\limits_{k\in\mathds{Z}} T_k^\pm(\e)$ and obtain:
\begin{equation*}
\sum\limits_{k\in\mathds{Z}} \|u\|_{L_2(T_k^\pm(\e))}^2\leqslant C\e\|u\|_{W_2^1(\Om)}^2.
\end{equation*}
This estimate and (\ref{3.23a}) yield
\begin{equation*}
\sum\limits_{k\in\mathds{Z}} \|u\|_{C(\overline{T_k^\pm(\e)})}^2 \leqslant C\e^{-1} \|u\|_{W_2^2(\Om\setminus(\g\cup\widetilde{g}))}^2.
\end{equation*}
Now the obvious inclusion $B_{R_2\e}(y_k^\e)\subset \overline{T_k^+(\e)\cup T_k^-(\e)}$  completes the proof.
\end{proof}

The next lemma provides apriori estimates for the original and homogenized resolvent.

\begin{lemma}\label{lm1.2}
The estimates
\begin{align}
&\|(\ope-\iu)^{-1}f\|_{W_2^1(\Om^\e)}\leqslant C\|f\|_{L_2(\Om^\e)},\nonumber
\quad \|(\opo{D}-\iu)^{-1}f\|_{W_2^2(\Om_\pm)}\leqslant C\|f\|_{L_2(\Om_\pm)},\nonumber
\\
&\|(\opo{}-\iu)^{-1}f\|_{W_2^2(\Om)}\leqslant C\|f\|_{L_2(\Om)}, \label{1.5}
\\
&
\|(\opo{\beta}-\iu)^{-1}f\|_{W_2^2(\Om\setminus\g)}
\leqslant C(\|\beta\|_{\Hinf^1(\g)}+1)\|f\|_{L_2(\Om)}
\label{1.7}
\end{align}
hold true, where $\beta\in\Hinf^1(\g)$.
\end{lemma}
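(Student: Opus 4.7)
The first estimate on $(\ope-\iu)^{-1}$ is purely variational. Set $u:=(\ope-\iu)^{-1}f$; then $u\in\Ho^1(\Om^\e,\p\Om\cup\p\tht^\e_0)$ and
\[
\foe(u,v)-\iu(u,v)_{L_2(\Om^\e)}=(f,v)_{L_2(\Om^\e)}\quad\text{for every test function}\ v.
\]
Choosing $v=u$ and using that $\foe(u,u)$ is real (since the form is symmetric), the imaginary part gives $\|u\|_{L_2(\Om^\e)}^2=-\operatorname{Im}(f,u)_{L_2(\Om^\e)}$, hence $\|u\|_{L_2(\Om^\e)}\leqslant\|f\|_{L_2(\Om^\e)}$. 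The real part then yields
\[
\fae(u,u)+(au,u)_{L_2(\p\tht^\e_1)}=\operatorname{Re}(f,u)_{L_2(\Om^\e)}\leqslant\|f\|_{L_2(\Om^\e)}\|u\|_{L_2(\Om^\e)}.
\]
Applying the second inequality of Lemma~\ref{lm1.3} then gives the claimed $W_2^1$-bound. The same variational procedure, now using the first inequality in Lemma~\ref{lm1.3} (with the appropriate choice of $\beta$), produces corresponding $W_2^1$-bounds for $(\opo{D}-\iu)^{-1}f$, $(\opo{}-\iu)^{-1}f$ and $(\opo{\beta}-\iu)^{-1}f$ with the correct dependence on $\|\beta\|_{\Hinf^1(\g)}$ in the last case (since the boundary term $(\beta u,u)_{L_2(\g)}$ is controlled via the first inequality of Lemma~\ref{lm1.6}).

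To upgrade these to $W_2^2$-estimates for the homogenized resolvents, I would invoke the characterizations of the operator domains already recalled in the text (\cite[Lem.~2.2]{BorAA}, \cite[Ch.~I\!V, Sec.~2.2, 2.3]{Mi}, \cite[Lem.~3.2]{BorIEOP}). Setting $g:=f+\iu u\in L_2(\Om)$, each homogenized resolvent solves a classical boundary/transmission problem on $\Om$ (or on $\Om_\pm$ separately), with ellipticity constant $c_2$ from \eqref{2.3} and coefficients in $\Hinf^1$. For $\opo{D}$, the problem decouples into two Dirichlet problems on $\Om_\pm$, and standard interior plus up-to-the-boundary elliptic regularity (using the $C^3$-smoothness of $\g$ and $\p\Om$) gives $\|u\|_{W_2^2(\Om_\pm)}\leqslant C(\|g\|_{L_2(\Om_\pm)}+\|u\|_{W_2^1(\Om_\pm)})$; combining with the $W_2^1$-bound already established yields \eqref{1.5} for $\opo{D}$. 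The estimate for $\opo{}$ is identical, since there is no interface condition on $\g$ and $u\in W_2^2(\Om)$ by the domain description.

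The only slightly more delicate piece is the $\opo{\beta}$-estimate \eqref{1.7}. Here the transmission conditions \eqref{2.13} read $[u]_\g=0$ and $[\p u/\p N^0]_\g=-\beta u|_\g$, i.e., $u$ satisfies an elliptic equation on each of $\Om_\pm$ with Dirichlet data on $\p\Om$ and, on $\g$, a coupled Robin-type transmission condition whose inhomogeneous part is $\beta u|_\g$. Applying the standard Agmon--Douglis--Nirenberg/Mikhailov $W_2^2$-estimate for elliptic systems with regular transmission conditions on each side of $\g$, I would get
\[
\|u\|_{W_2^2(\Om\setminus\g)}\leqslant C\Big(\|g\|_{L_2(\Om)}+\|\beta u|_\g\|_{W_2^{1/2}(\g)}+\|u\|_{W_2^1(\Om)}\Big).
\]
Since $\beta\in\Hinf^1(\g)$, pointwise multiplication gives $\|\beta u|_\g\|_{W_2^{1/2}(\g)}\leqslant C\|\beta\|_{\Hinf^1(\g)}\|u|_\g\|_{W_2^{1/2}(\g)}\leqslant C\|\beta\|_{\Hinf^1(\g)}\|u\|_{W_2^1(\Om)}$ by the trace theorem, and combining with the already-established $W_2^1$-bound $\|u\|_{W_2^1(\Om)}\leqslant C(\|\beta\|_{\Hinf^1(\g)}+1)^{1/2}\|f\|_{L_2(\Om)}$ (actually a cleaner version: first absorb the $W_2^1$-norm into $\|f\|_{L_2}$ via the variational step) produces the factor $(\|\beta\|_{\Hinf^1(\g)}+1)$ in \eqref{1.7}.

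The main obstacle is keeping the $\beta$-dependence explicit and linear in $\|\beta\|_{\Hinf^1(\g)}$; this requires care in the trace/multiplier step to avoid squared factors. Everything else is a combination of the variational a~priori argument with classical elliptic regularity, both of which are essentially bookkeeping once Lemma~\ref{lm1.3} and the domain description are available.
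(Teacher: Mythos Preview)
Your proposal is correct and follows essentially the same approach as the paper. The paper's proof is terser: it states that the first estimate follows from Lemma~\ref{lm1.3} together with the integral identity for $(\ope-\iu)^{-1}f$ (exactly your variational argument), and for the three $W_2^2$-estimates it simply refers to the proof of Lemma~8.1 in \cite[Ch.~I\!I\!I, Sec.~8]{Ld}, which is the same classical elliptic regularity machinery you are invoking, just cited rather than spelled out.
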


\begin{proof}
The first estimate is implied by Lemma~\ref{lm1.3} and the
integral identity for $(\ope-\iu)^{-1}f$. And the three other estimates can be proven completely in the same way as Lemma~8.1 in \cite[Ch. I\!I\!I, Sec. 8]{Ld}.
\end{proof}

Given $\beta\in\Hinf^1(\g)$, by $\topo{\b}$ we denote the operator with the differential expression (\ref{2.4}) subject to the boundary conditions
\begin{gather}
[u]_{\widetilde{\g}}=0,\quad \bigg[\frac{\p u}{\p \widetilde{N}^0}\bigg]_{\widetilde{\g}}+\beta u\big|_{\widetilde{\g}}=0,\label{6.2}
\\
\hphantom{\Bigg(}\frac{\p\hphantom{N}}{\p \widetilde{N}^0}:=\sum\limits_{i,j=1}^{2} A_{ij}\nu_i^0 \frac{\p\hphantom{x}}{\p x_j},\quad [u]_{\widetilde{\g}}:=u\big|_{\tau=-(b+1)R_2\e+0}-u\big|_{\tau=-(b+1)R_2\e-0}. \nonumber
\end{gather}
Here the function $\b$ is defined on $\widetilde{\g}$ in the sense that $\b=\b(s)$ at the point $x=\varrho(s)-(b+1)R_2\e\nu^0(s)\in\widetilde{\g}$.
We observe that the normal to $\widetilde{\g}$ coincides with $\nu^0$ and this is why exactly this vector appears in boundary conditions (\ref{6.2}). The associated form is
$\tfoo{\b}(u,v):=\fao(u,v)+(\b u,v)_{L_2(\widetilde{\g})}$
in $L_2(\Om)$ on $\Ho^1(\Om)$.

As $\g$,  curve $\widetilde{\g}$ partitions $\Om$ into two disjoint subdomains $\widetilde{\Om}_\pm$, where $\widetilde{\Om}_+$ is the upper/exterior one. By analogy with \cite[Lem. 2.2]{BorAA}, \cite[Ch. I\!V, Sec. 2.2, 2.3]{Mi}, \cite[Lem. 3.2]{BorIEOP} one can check that
$\Dom(\topo{\b})=\{u\in\Ho^1(\Om):  u\in W_2^2(\widetilde{\Om}_\pm)\text{ and (\ref{6.2}) is satisfied}\}$.

Our last lemma in this section is devoted to estimating the resolvent of operator $\topo{\beta}$.

\begin{lemma}\label{lm6.1}
Let $\beta\in\Hinf^1(\{x: |\tau|<\tau_0/2\})$. Then for any $f\in L_2(\Om)$ and all sufficiently small $\e$ the estimates
\begin{align}
&
\|(\topo{\beta}-\iu)^{-1}f\|_{W_2^2(\Om\setminus\widetilde{\g})}\leqslant C\big(\|\b\|_{\Hinf^1(\g)}+1\big)\|f\|_{L_2(\Om)},
\label{6.3}
\\
&\|(\topo{\beta}-\iu)^{-1}f-(\opo{\beta}-\iu)^{-1} f\|_{W_2^2(\Om\setminus(\g\cup\widetilde{\g}))}\leqslant C\e^{\frac{1}{2}}\|\b\|_{\Hinf^1(\g)}\|f\|_{L_2(\Om)}\label{6.4}
\end{align}
hold true.
\end{lemma}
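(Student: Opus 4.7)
For estimate (\ref{6.3}), I would mirror the proof of (\ref{1.7}) in Lemma~\ref{lm1.2}. Setting $u:=(\topo{\beta}-\iu)^{-1}f$ in the variational identity $\tfoo{\beta}(u,u)-\iu\|u\|_{L_2(\Om)}^2=(f,u)_{L_2(\Om)}$, the imaginary part gives $\|u\|_{L_2(\Om)}\leqslant\|f\|_{L_2(\Om)}$; the real part, combined with a version of Lemma~\ref{lm1.3} in which the trace of $u$ on $\g$ is replaced by the trace on $\widetilde{\g}$ (controlled through (\ref{3.15})), yields $\|u\|_{W_2^1(\Om)}\leqslant C(\|\beta\|_{\Hinf^1(\g)}+1)\|f\|_{L_2(\Om)}$. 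Standard elliptic regularity on each of $\widetilde{\Om}_\pm$, with the jump condition (\ref{6.2}) on $\widetilde{\g}$ treated as Robin-type data, then completes the $W_2^2$-bound (\ref{6.3}).

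For (\ref{6.4}), set $u:=(\topo{\beta}-\iu)^{-1}f$, $v:=(\opo{\beta}-\iu)^{-1}f$ and $w:=u-v$. Subtracting the two variational identities yields
\begin{equation*}
\fao(w,\phi)-\iu(w,\phi)_{L_2(\Om)}=\bigl[(\beta v,\phi)_{L_2(\g)}-(\beta v,\phi)_{L_2(\widetilde{\g})}\bigr]-(\beta w,\phi)_{L_2(\widetilde{\g})}
\end{equation*}
for each $\phi\in\Ho^1(\Om)$. In local coordinates $(s,\tau)$, the bracketed difference equals an integral of $\p_\tau(\beta v\phi)$ (with a bounded Jacobian) over the thin strip $\Pi_\e:=\{x:-(b+1)R_2\e<\tau<0\}$ of width $O(\e)$. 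Using the a priori bound $\|v\|_{W_2^2(\Om_\pm)}\leqslant C\|f\|_{L_2(\Om)}$ from (\ref{1.7}), Cauchy--Schwarz, and the one-dimensional estimate $\|\phi\|_{L_2(\Pi_\e)}^2\leqslant C\e\|\phi\|_{W_2^1(\Om)}^2$ (from integration in $\tau$ starting from $\g$ and the trace theorem), the bracket is bounded by $C\e^{1/2}\|\beta\|_{\Hinf^1(\g)}\|f\|_{L_2(\Om)}\|\phi\|_{W_2^1(\Om)}$. Taking $\phi=w$ and extracting real and imaginary parts, while absorbing $(\beta w,w)_{L_2(\widetilde{\g})}$ via the $\widetilde{\g}$-version of Lemma~\ref{lm1.3} as in (\ref{6.3}), yields the intermediate bound $\|w\|_{W_2^1(\Om)}\leqslant C\e^{1/2}\|\beta\|_{\Hinf^1(\g)}\|f\|_{L_2(\Om)}$.

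To promote this to the $W_2^2$-bound on $\Om\setminus(\g\cup\widetilde{\g})$, note that in each of the three subdomains of $\Om\setminus(\g\cup\widetilde{\g})$, $w$ satisfies the homogeneous equation $\mathcal{L}w-\iu w=0$ (since the source $f$ cancels), vanishes on $\p\Om$, is continuous across $\g\cup\widetilde{\g}$, and carries the prescribed normal-derivative jumps $[\p w/\p N^0]_\g=\beta v|_\g$ and $[\p w/\p N^0]_{\widetilde{\g}}=-\beta u|_{\widetilde{\g}}$. Elliptic regularity in each subdomain controls $\|w\|_{W_2^2}$ by the $W_2^{1/2}$-norms of these jumps plus the $W_2^1$-norm of $w$ (which we have just bounded by $O(\e^{1/2})$). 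The decisive observation is that the two jumps nearly cancel: decomposing $\beta u|_{\widetilde{\g}}-\beta v|_\g=\beta(u-v)|_{\widetilde{\g}}+(\beta v|_{\widetilde{\g}}-\beta v|_\g)$, the first summand is $O(\e^{1/2})$ in $W_2^{1/2}(\widetilde{\g})$ by the $W_2^1$-bound on $w$, and the second is $O(\e^{1/2})$ by the $W_2^2$-smoothness of $v$ from (\ref{1.7}) combined with the $O(\e)$ separation of $\g$ and $\widetilde{\g}$.

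\emph{The main obstacle} is precisely this last step. The $W_2^1$-smallness of $w$ follows fairly directly from the thin-strip analysis of the variational identity, but lifting to $W_2^2$ requires the near-cancellation of the jump data on the two close curves and simultaneously invokes both the just-established (\ref{6.3}) for $u$ and (\ref{1.7}) for $v$, organized through elliptic regularity in each of the three subdomains.
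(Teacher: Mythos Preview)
Your treatment of (\ref{6.3}) and of the $W_2^1$-bound in (\ref{6.4}) matches the paper's: subtract the two variational identities, rewrite the resulting boundary-integral difference as an integral over the thin strip between $\g$ and $\widetilde{\g}$, and estimate via (\ref{3.4a}).

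The gap is in your $W_2^2$ step. You invoke ``elliptic regularity in each subdomain'' to bound $\|w\|_{W_2^2}$ by the $W_2^{1/2}$-norms of the two jumps plus $\|w\|_{W_2^1}$, and then argue that the jumps ``nearly cancel''. But any transmission-type regularity estimate of this form involves the \emph{sum} $\|[\p w/\p N^0]_\g\|_{W_2^{1/2}(\g)}+\|[\p w/\p N^0]_{\widetilde{\g}}\|_{W_2^{1/2}(\widetilde{\g})}$, and each summand is individually of order $\|\beta\|_{\Hinf^1}\|f\|_{L_2(\Om)}$, not $O(\e^{1/2})$. The two jumps live on \emph{different} curves, so their approximate opposition as functions of $s$ does not feed into that estimate; there is no standard mechanism by which ``nearly opposite jump data on two curves $O(\e)$ apart'' yields a solution small in $W_2^2$. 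Moreover, applying elliptic regularity separately on the middle strip of width $O(\e)$ is itself delicate, since the constants on a thin domain are not uniform in $\e$.

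The paper avoids this by not using black-box transmission regularity at all. It reruns the Ladyzhenskaya tangential-difference-quotient argument (the one behind (\ref{1.7})) directly for $\widehat{u}^0:=u^0-\tu$. That argument produces an a priori bound of the form
\begin{equation*}
\|\widehat{u}^0\|_{W_2^2(\Om\setminus(\g\cup\widetilde{\g}))}\leqslant C(\|\beta\|_{\Hinf^1}+1)\|\widehat{u}^0\|_{W_2^1(\Om)}+\left|\int\limits_{\g}\frac{\p\widehat{u}^0}{\p s}\Big(P_1\frac{\p u^0}{\p s}+P_2 u^0\Big)\di s-\int\limits_{\widetilde{\g}}\frac{\p\widehat{u}^0}{\p s}\Big(P_1\frac{\p u^0}{\p s}+P_2 u^0\Big)\di s\right|,
\end{equation*}
with explicit $P_1,P_2$ controlled by $\|\beta\|_{\Hinf^1}$. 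The boundary contributions thus enter \emph{already as a difference} of integrals of the \emph{same} integrand over $\g$ and $\widetilde{\g}$, and this difference is then handled by the very thin-strip computation (\ref{5.15}) you used for the $W_2^1$-bound. That is where the cancellation you correctly anticipated actually manifests; your route via separate jump norms does not see it.
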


\begin{proof}
The first estimate can be proven by reproducing the arguments in the proof of Lemma~8.1 in \cite[Ch. I\!I\!I, Sec. 8]{Ld} and keeping track of the dependence on $\beta$. Although now the operator depends on $\e$, the only dependence is in the definition of the curve $\widetilde{\g}$ and its equation depends on $\e$ smoothly. Exactly this fact implies  that estimate (\ref{6.3}) is uniform in $\e$.

We write the integral identities for $u^0:=(\opo{\beta}-\iu)^{-1} f$ and $\tu:=(\topo{\beta}-\iu)^{-1} f$ choosing $\widehat{u}_0:=u^0-\tu$ as the test function. Then we deduct one identity from the other. It yields
\begin{equation}\label{6.5}
\foo{\beta}(\widehat{u}_0,\widehat{u}_0) -\iu\|\widehat{u}_0\|_{L_2(\Om)}^2= (\beta \widetilde{u}_0, \widehat{u}_0)_{L_2(\widetilde{\g})}- (\beta \widetilde{u}_0,\widehat{u}_0)_{L_2(\g)}.
\end{equation}
Since $\g$ has a bounded curvature, we have
$\frac{\displaystyle d\nu^0}{\displaystyle ds}=K\varrho'$,
where $K$ is an uniformly bounded on $\g$ function. Then we can rewrite the right hand side of (\ref{6.5}) as
\begin{equation}
\begin{aligned}
(\beta \widetilde{u}_0,& \widehat{u}_0)_{L_2(\widetilde{\g})}- (\beta \widetilde{u}_0,\widehat{u}_0)_{L_2(\g)}
\\
&=\int\limits_{\mathds{R}}  (\beta\widetilde{u}_0\widehat{u}_0)\big|_{\tau=-(b+1)R_2\e}(1-(b+1)R_2\e K(s))\di s- \int\limits_{\mathds{R}}  (\beta\widetilde{u}_0\widehat{u}_0)\big|_{\tau=0}\di s
\\
&= -(b+1)R_2\e\int\limits_{\mathds{R}}  (\beta\widetilde{u}_0\widehat{u}_0)\big|_{\tau=-(b+1)R_2\e}\di s- \int\limits_{\mathds{R}}  \int\limits_{-(b+1)R_2\e}^{0}
\frac{\p\hphantom{\tau}}{\p\tau} \beta\widetilde{u}_0\widehat{u}_0 \di\tau\di s.
\end{aligned}\label{5.15}
\end{equation}
We then employ (\ref{3.4a}) with $v=\widetilde{u}^0$, $v=\frac{\p \widetilde{u}^0}{\p \tau}$, $v=\widehat{u}^0$, $v=\frac{\p \widehat{u}^0}{\p \tau}$ to obtain
\begin{equation*}
\left|(\beta \widetilde{u}_0, \widehat{u}_0)_{L_2(\widetilde{\g})}- (\beta \widetilde{u}_0,\widehat{u}_0)_{L_2(\g)}\right|\leqslant C\e^{\frac{1}{2}} \|\b\|_{\Hinf^1(\g)} \|\widetilde{u}_0\|_{W_2^1(\Om)}\|\widehat{u}_0\|_{W_2^1(\Om)}.
\end{equation*}
Two last relations, (\ref{6.5}), and Lemma~\ref{lm1.3} yield
\begin{equation*}
\|\widehat{u}^0\|_{W_2^1(\Om)}\leqslant C\e^{\frac{1}{2}}\|\b\|_{\Hinf^1(\g)}\|f\|_{L_2(\Om)}.
\end{equation*}

It remains to estimate $L_2(\Om)$-norm of second derivatives of $\widehat{u}^0$. We again reproduce the arguments in the proof of Lemma~8.1 in \cite[Ch. I\!I\!I, Sec. 8]{Ld}. It leads us to the estimate
\begin{align*}
\|\widehat{u}^0\|_{W_2^2(\Om\setminus(\g\cup\widetilde{\g}))}\leqslant & C(\|\b\|_{\Hinf^1(\{x: |\tau|<\tau_0/2\})}+1)\|\widehat{u}^0\|_{W_2^1(\Om)}
\\
&+ \bigg| \int\limits_{\g} \frac{\p \widehat{u}^0}{\p s} \left(P_1 \frac{\p u^0}{\p s}+P_2u^0\right)\di s-
 \int\limits_{\widetilde{\g}} \frac{\p \widehat{u}^0}{\p s} \left(P_1 \frac{\p u^0}{\p s}+P_2u^0\right)\di s
\bigg|
\end{align*}
where $P_i\in\Hinf^1({x: |\tau|<\tau_0/2})$ are certain functions obeying the inequality
\begin{equation*}
\|P_1\|_{\Hinf^1({x: |\tau|<\tau_0/2})} + \|P_2\|_{\Hinf^1({x: |\tau|<\tau_0/2})}\leqslant C \|\b\|_{\Hinf^1(\{x: |\tau|<\tau_0/2\})},
\end{equation*}
and $C$ are constants independent of $f$, $\beta$, and $\e$. Proceeding as in (\ref{5.15}), we get the desired estimates for $\|\widehat{u}^0\|_{W_2^2(\Om\setminus(\g\cup\widetilde{\g}))}$.
\end{proof}

\section{
Homogenized Dirichlet condition}

In this section we prove Theorem~\ref{th2.1}. Given arbitrary  $f\in L_2(\Om)$, we denote $u^{\e}:=(\ope-\iu)^{-1}f$, $u^0:=(\opo{D}-\iu)^{-1}f$. Estimate (\ref{2.5a}) is equivalent to
\begin{equation}\label{4.0a}
\|u^\e-u^0\|_{W_2^1(\Om^\e)} \leqslant C\e^{\frac{1}{2}} \big(|\ln\eta|^{\frac{1}{2}}+1\big)\|f\|_{L_2(\Om)},
\end{equation}
and in what follows we shall prove exactly this inequality.

Our main idea is to employ the integral identities for $u^\e$ and $u^0$ and to get then a similar identity for $u^\e-u^0$. However,
function $u^\e-u^0$ does not satisfy Dirichlet condition on $\p\tht_0^\e$ and we can not use it as the test function in the integral identity for $u^\e$. To overcome this difficulty, we make use of a boundary corrector.
Namely, let $\chi^{\e}_1(x):=\chi_1\Big(\frac{|\tau|}{R_3\e}\Big)$ as $|\tau|<\tau_0$ and $\chi^{\e}_1(x):=0$ outside the set $\{x: |\tau|<\tau_0\}$,
$\chi_1$ is the cut-off function introduced in the proof of Lemma~\ref{lm3.3b}. We also let $v^\e:=u^{\e}-u^0+\chi_1^\e u^0=(1-\chi_1^\e) u^0$.  Function $v^\e$ vanishes on $\p\tht_0^\e$ and we use it at as the test function in the integral identity for $u^\e$. And our strategy is to estimate independently $W_2^1(\Om^\e)$-norm of $v^\e$ and $\chi_1^\e u^0$. This will lead us estimate (\ref{4.0a}).

Since $v^\e\in\Ho(\Om^\e,\p\Om\cup\p\tht_0^\e)$ and $(1-\chi_1^\e)v^\e\in\Ho(\Om,\p\Om\cup\g)$, we can use these functions as the test ones in the integral identities for operators  $\ope$ and $\opo{D}$:
\begin{equation}\label{3.1}
\begin{aligned}
&\foe(u^{\e},v^{\e})-\iu(u^{\e},v^{\e})_{L_2(\Om^\e)}=(f,v^{\e})_{L_2(\Om^\e)},
\\
&\foo{D}\big(u^0,(1-\chi^{\e}_1) v^{\e}\big) -\iu \big(u^0,(1-\chi^{\e}_1) v^{\e}\big)_{L_2(\Om)}=\big(f,(1-\chi^{\e}_1) v^{\e} \big)_{L_2(\Om)}.
\end{aligned}
\end{equation}
Function $1-\chi^{\e}_1$ vanishes in each $\om_k^\e$ and hence
\begin{equation}\label{3.2}
\begin{gathered}
\big(u^0,(1-\chi^{\e}_1) v^{\e}\big)_{L_2(\Om)}= \big(u^0,(1-\chi^{\e}_1)v^{\e} \big)_{L_2(\Om^\e)},
\\
\big(f,(1-\chi^{\e}_1)v^{\e}\big)_{L_2(\Om)}=\big(f,(1-\chi^{\e}_1) v^{\e}\big)_{L_2(\Om^\e)},
\quad \big(a(1-\chi^{\e}_1)u^0,v^{\e}\big)_{L_2(\p\tht^\e_0)}=0,
\end{gathered}
\end{equation}
and by the definition of $\foo{D}$,
\begin{align}
&\foo{D}\big(u^0,(1-\chi^{\e}_1) v^{\e}\big)= \foo{D} \big((1-\chi^{\e}_1)u^0,v^{\e}\big)+S^\e,\label{3.3}
\\
&S^\e:=- \sum\limits_{i,j=1}^{2} \left(A_{ij}\frac{\p u^0}{\p x_j} \frac{\p \chi^{\e}_1}{\p x_i}, v^{\e}\right)_{L_2(\Om^\e)}
 + \sum\limits_{i,j=1}^{2} \left(A_{ij}u^0\frac{\p \chi^{\e}_1}{\p x_j},\frac{\p v^{\e}}{\p x_i}\right)_{L_2(\Om^\e)}\nonumber
\\
&\hphantom{S^\e:=} +\sum\limits_{j=1}^{2} \left(A_j u^0 \frac{\p\chi^{\e}_1}{\p x_j},v^{\e}\right)_{L_2(\Om^\e)}
- \sum\limits_{j=1}^{2} \left(u^0\frac{\p\chi^{\e}_1}{\p x_j}, A_j v^{\e} \right)_{L_2(\Om^\e)}.\nonumber
\end{align}
We deduct the formulae in (\ref{3.1}) one from the other and employ (\ref{3.2}), (\ref{3.3}),
\begin{equation}\label{3.4}
\foe(v^{\e},v^{\e})-\iu\|v^{\e}\|_{L_2(\Om^\e)}^2= (\chi^{\e}_1f,v^{\e})_{L_2(\Om^\e)}+S^\e.
\end{equation}

Our next step is to estimate the right hand side of the obtained  identity. In order to do it, we need two auxiliary lemmata.

\begin{lemma}\label{lm3.1}
For each $u\in\Dom(\opo{D})$ and $|\tau|<\tau_0/3$ the estimates
\begin{equation*}
|u(s,\tau)|^2\leqslant C\tau^2 \|u(s,\cdot)\|_{W_2^2\left(-\frac{\tau_0}{2},\frac{\tau_0}{2}\right)}^2,
\quad
|\nabla_{s,\tau} u(s,\tau)|^2\leqslant C\|\nabla_{s,\tau} u(s,\cdot)\|_{W_2^1\left(-\frac{\tau_0}{2},\frac{\tau_0}{2}\right)}^2\
\end{equation*}
hold true. 
\end{lemma}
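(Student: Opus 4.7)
The plan is to reduce both pointwise bounds to the one-dimensional Sobolev embedding $W_2^1(I)\hookrightarrow C(\overline{I})$ on a bounded interval, exploiting the Dirichlet condition $u\big|_\g=0$ that is built into $\Dom(\opo{D})=\Ho^1(\Om,\p\Om\cup\g)\cap W_2^2(\Om\setminus\g)$. For almost every fixed $s$, the restriction $u(s,\cdot)$ lies in $W_2^2$ on each of the two subintervals cut out by $\tau=0$ and vanishes at $\tau=0$; the norm $\|u(s,\cdot)\|_{W_2^2(-\tau_0/2,\tau_0/2)}$ in the statement is naturally interpreted as the sum of the two one-sided norms, since $\p_\tau u$ may have a jump across $\g$.

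For the first estimate, fix $s$ and, without loss of generality, $0<\tau<\tau_0/3$. I would write
\begin{equation*}
u(s,\tau)=\int\limits_0^\tau \frac{\p u}{\p \tau}(s,t)\di t,
\end{equation*}
and bound the integrand by the one-dimensional embedding:
\begin{equation*}
\left|\frac{\p u}{\p \tau}(s,t)\right|\leqslant \left\|\frac{\p u}{\p \tau}(s,\cdot)\right\|_{C[0,\tau_0/2]}\leqslant C\|u(s,\cdot)\|_{W_2^2(0,\tau_0/2)}.
\end{equation*}
Multiplying by $|\tau|$ and squaring yields $|u(s,\tau)|^2\leqslant C\tau^2\|u(s,\cdot)\|_{W_2^2(0,\tau_0/2)}^2$, which is the desired inequality; the case $\tau<0$ is symmetric.

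The second estimate follows from the same embedding applied componentwise to the first-order derivatives $\p_s u(s,\cdot)$ and $\p_\tau u(s,\cdot)$ on the appropriate half-interval. The only delicate point in the whole argument is the piecewise nature of the $W_2^2$ regularity across $\g$: since the chosen $\tau$ lies strictly on one side of $\g$, both the integration above and the embedding are performed on that side alone, and the resulting bounds then hold \emph{a fortiori} with respect to the two-sided norm appearing in the statement. No further information about $u$ beyond the membership $u\in\Dom(\opo{D})$, and in particular no use of ellipticity of the differential expression (\ref{2.4}), is needed.
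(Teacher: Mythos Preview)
Your argument is correct and coincides with the paper's: both proofs use the representation $u(s,\tau)=\int_0^\tau \partial_\tau u(s,t)\,\di t$ (exploiting $u|_\g=0$) together with the one-dimensional embedding $W_2^1\hookrightarrow C$ applied to $\partial_\tau u$ (the paper invokes this as ``(\ref{3.4a}) with $v=\partial_\tau u$''), and then the same embedding componentwise for the gradient estimate. Your version is slightly more explicit in that you bound $|\partial_\tau u|$ in sup-norm before integrating, which yields the factor $\tau^2$ exactly as stated, whereas the paper's displayed Cauchy--Schwarz step taken literally gives only $|\tau|$; the underlying idea is the same.
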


\begin{proof}
The desired estimates follow from the obvious relations
\begin{equation*}
|u(s,\tau)|^2=\bigg|\int\limits_{0}^{\tau} \frac{\p u}{\p\tau} (s,t)\di t\bigg|^2\leqslant |\tau| \int\limits_{-\frac{\tau_0}{2}}^{\frac{\tau_0}{2}} \left|\frac{\p u}{\p\tau}(s,t)\right|^2\di t
\end{equation*}
and (\ref{3.4a}) with $v=\frac{\p u}{\p \tau}$.
\end{proof}

\begin{lemma}\label{lm3.4}
The estimate
\begin{equation*}
\|v^{\e}\|_{L_2(\Pi^{R_3\e}\setminus\tht^\e)}\leqslant C\e\big(|\ln\eta(\e)|^{\frac{1}{2}}+1\big)\|\nabla v^{\e}\|_{L_2(\Om^\e)}
\end{equation*}
holds true.
\end{lemma}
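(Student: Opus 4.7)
My plan is to reduce the claim to a Friedrichs-type inequality on the thin strip and then prove the latter by localizing around each Dirichlet hole. Since the cutoff satisfies $\chi_1^\e\equiv 1$ throughout $\Pi^{R_3\e}$, the function $v^\e$ coincides with $u^\e:=(\ope-\iu)^{-1}f$ on $\Pi^{R_3\e}\setminus\tht^\e$; in particular $\nabla v^\e=\nabla u^\e$ there, and $u^\e$ vanishes on $\p\tht^\e_0$ by the Dirichlet condition. Thus it suffices to prove
\begin{equation*}
\|u^\e\|_{L_2(\Pi^{R_3\e}\setminus\tht^\e)}\leqslant C\e\big(|\ln\eta(\e)|^{\frac{1}{2}}+1\big)\|\nabla u^\e\|_{L_2(\Om^\e)}.
\end{equation*}

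For the localization, given $z\in\Pi^{R_3\e}$ I project it orthogonally onto $\g$; the projection lies in $\Pi^{\e bR_2}$ and is therefore covered by some $B_{R_3\e}(y_k^\e)$ with $k\in\mathds{M}_0^\e$ by virtue of \ref{A5}, whence $z\in B_{2R_3\e}(y_k^\e)$. The enlarged cover $\{B_{2R_3\e}(y_k^\e):k\in\mathds{M}_0^\e\}$ of $\Pi^{R_3\e}$ therefore has multiplicity bounded by a constant depending only on $R_3/(bR_2)$, thanks to the $2bR_2\e$-separation in \ref{B1}. On each such ball I extend $u^\e$ by zero across $\p\om_k^\e$ (allowed since $u^\e$ vanishes there) and use a $W_2^1$-extension operator, uniform in $k$ and $\e$, to fill in the at most $O(1)$ other Robin holes $\om_j^\e$ of diameter $O(\e\eta)$ contained in the ball. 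This produces $\widetilde{u}_k^\e\in W_2^1(B_{2R_3\e}(y_k^\e))$ that vanishes on the inner disk $B_{R_1\e\eta}(x_k^\e)\subset\om_k^\e$ and whose gradient is controlled by $\|\nabla u^\e\|_{L_2(B_{3R_3\e}(y_k^\e)\setminus\tht^\e)}$ up to an absolute constant.

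The local bound then follows from the classical two-dimensional Friedrichs inequality for a function vanishing on an inner disk: in polar coordinates centered at $x_k^\e$, the radial identity $\widetilde{u}_k^\e(\rho,\omega)=\int_{R_1\e\eta}^{\rho}\p_t\widetilde{u}_k^\e(t,\omega)\,dt$ combined with Cauchy--Schwarz against the weight $dt/t$ gives
\begin{equation*}
\|\widetilde{u}_k^\e\|_{L_2(B_{2R_3\e}(y_k^\e))}^2\leqslant C\e^2\big(|\ln\eta(\e)|+1\big)\|\nabla\widetilde{u}_k^\e\|_{L_2(B_{2R_3\e}(y_k^\e))}^2,
\end{equation*}
where the logarithmic factor arises from $\ln(3R_3\e/R_1\e\eta)=|\ln\eta(\e)|+O(1)$. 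Summing this estimate over $k\in\mathds{M}_0^\e$ with the bounded-multiplicity cover, and then identifying $\nabla u^\e$ with $\nabla v^\e$ on $\Pi^{R_3\e}\setminus\tht^\e$ before enlarging to $\Om^\e$, closes the argument. The chief obstacle is verifying that the $W_2^1$-extension through the Robin holes inside each localizing ball is uniform in $k$ and $\e$, since \ref{B1} supplies only qualitative $C^2$-smoothness of each $\p\om_k$. This uniformity is ultimately furnished either by an explicit partition-of-unity construction in an annular collar of $\p\om_k$ (exploiting the fixed inner/outer radii $R_1,R_2$ and the uniform perimeter bound $L$ from \ref{B1}) or, more elegantly, by an integration-by-parts argument built on the divergence-free vector field supplied by \ref{B2}, in the spirit of the proof of Lemma~\ref{lm3.3b}.
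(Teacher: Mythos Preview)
Your core mechanism---localize near each Dirichlet hole, extend by zero into it, and apply the radial Friedrichs inequality with the logarithmic factor coming from $\int_{R_1\e\eta}^{O(\e)} dt/t$---is exactly the engine the paper uses (see the paper's identities around (\ref{4.5a}) and (\ref{3.9b})). The covering of $\Pi^{R_3\e}$ by balls around points $y_k^\e$, $k\in\mathds{M}_0^\e$, via \ref{A5} is also the same in spirit; the paper phrases it as a covering by star-shaped domains each pinned to one ball $B_{R_1\e\eta}(x^k+y_k^\e)$.

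Where your argument diverges from the paper, and where the genuine gap lies, is the treatment of the Robin holes $\om_j^\e$, $j\in\mathds{M}_1^\e$, that may fall inside your localizing balls. You propose a $W_2^1$-extension across them, uniform in $k$ and $\e$, and cite the fixed radii $R_1,R_2$ and the perimeter bound $L$ from \ref{B1}. That is not enough: \ref{B1} gives only \emph{qualitative} $C^2$-smoothness of each $\p\om_k$, with no control on the $C^2$ (or even Lipschitz) constants of the boundary charts, so a uniform Stein-type extension is not available from the stated hypotheses. The paper is explicit that it deliberately avoids assuming a uniform continuation operator and replaces it by \ref{B2}.

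The paper's fix is not to extend at all. It first carves out the small annuli $\widehat{B}^k_{b_*}$ around Robin holes from the Friedrichs estimate, obtaining (\ref{3.9b}) on $\Pi^{R_3\e}\setminus\bigcup_{k\in\mathds{M}_1^\e}B^k_{b_*}$. Those removed pieces are then controlled by Lemma~\ref{lm3.3b}, specifically (\ref{3.7b}), which bounds $\|v^\e\|_{L_2(\widehat{B}^k_{b_*})}^2$ by $\e\eta\|v^\e\|_{L_2(\p B^k_{b_*})}^2+\e^2\eta^2\|\nabla v^\e\|_{L_2(\widehat{B}^k_{b_*})}^2$; the boundary term on $\p B^k_{b_*}$ lives in the region already handled by the Friedrichs bound, so the argument closes with only $\|\nabla v^\e\|_{L_2(\Om^\e)}$ on the right. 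Your second suggestion---using the divergence-free field from \ref{B2} ``in the spirit of Lemma~\ref{lm3.3b}''---points exactly here, but the implementation is an $L_2$ estimate on the annuli, not an extension operator; carrying it out reproduces the paper's route rather than yours.
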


\begin{proof}  We extend the function $v^{\e}$ by zero inside $\tht_0^\e$. Since $v^{\e}$ vanishes on $\p\tht^0_\e$,  the  extension belongs to $W_2^1(\Om\setminus\tht_1^\e)$ and has the same $L_2$- and $W_2^1$-norm.

By assumption \ref{B1}, the ball $B_{R_1\e\eta}\big(x^k+y_k^\e\big)$ lies inside $\om_k^\e$ for each $k\in\mathds{M}_0^\e$. We introduce polar coordinates $(r,\tht)$ centered at $x^k+y_k^\e$ and associated with variables $(s,\tau)$.  Since $v^{\e}=0$ inside $B_{R_1\e\eta}\big(x^k+y_k^\e\big)$, $k\in\mathds{M}_0^\e$, we have
\begin{equation}\label{4.5a}
v^{\e}(x)=\int\limits_{R_1\e\eta}^{r} \frac{\p v^{\e}}{\p r}\di r,\quad |v^{\e}(x)|^2\leqslant \ln\frac{r}{R_4\e\eta} \int\limits_{R_5\e\eta}^{r} \left|\frac{\p v^{\e}}{\p r}\right|^2 r\di r
\end{equation}
for some $R_4>0$.

It follows from Assumption \ref{A5} that the domain $\Pi^{R_3\e}\setminus \bigcup\limits_{k\in\mathds{M}_1^\e} B^k_{b_*}$ can be covered by the union of star-shaped domains so that each of these domains contains exactly one of the balls $B_{R_1\e\eta}(x_k+y_k^\e)$ and is contained in the ball $B_{\widetilde{R_3}\e}(y_k^\e)$, where $\widetilde{R_3}$ is a fixed constant. Integrating then (\ref{4.5a}) over these star-shaped domains, we arrive at the estimates
\begin{align}\label{3.9b}
&\|v^\e\|_{L_2(\Pi^{R_3\e}\setminus \bigcup\limits_{k\in\mathds{M}_1^\e}B^k_{b_*})}
  \leqslant C\e^2(|\ln\eta|+1)  \|\nabla v\|_{L_2(\Pi^{R_3\e}\setminus \bigcup\limits_{k\in\mathds{M}_1^\e} B^k_{1})}^2,
\\
&\sum\limits_{k\in\mathds{M}_1^\e} \|v^\e\|_{L_2(\p B^k_{b_*})}^2 \leqslant C\e\eta \|v\|_{W_2^1(\Pi^{R_3\e}\setminus \bigcup\limits_{k\in\mathds{M}_1^\e} B^k_{b})}^2.\nonumber
\end{align}
The latter estimate and Lemma~\ref{lm3.3b} yield
\begin{equation*}
\sum\limits_{k\in\mathds{M}_1^\e} \|v\|_{L_2(\widehat{B}^k_{b_*})}^2 \leqslant C\e^2\eta^2 \|v\|_{W_2^1(\Pi^{R_3\e}\setminus\tht^\e)}.
\end{equation*}
Combining this estimate and (\ref{3.9b}), we complete the proof.
\end{proof}

Let us estimate the right hand side of (\ref{3.4}). By Lemma~\ref{lm3.4} we get
\begin{align*}
\big|\big(\chi^{\e}_1 f,v^{\e}\big)_{L_2(\Om^\e)} \big|= & \big|\big(f, \chi^{\e}_1 v^{\e}\big)_{L_2(\Pi^{R_3\e}\setminus\tht^\e)} \big| \leqslant C\|f\|_{L_2(\Om)}\|v^{\e}\|_{L_2(\Pi^{R_3\e}\setminus\tht^\e)}
\\
\leqslant & C\e\big(|\ln\eta|^{\frac{1}{2}}+1\big)\|f\|_{L_2(\Om)} \|\nabla v^{\e}\|_{L_2(\Om^\e)}.
\end{align*}
In the same way, employing Lemmata~\ref{lm1.3},~\ref{lm1.2},~\ref{lm3.1},~\ref{lm3.4}, we obtain the estimate for the first term in $S^\e$:
\begin{align*}
&\left|\sum\limits_{i,j=1}^{2} \left(A_{ij}\frac{\p u^0}{\p x_j} \frac{\p \chi^{\e}_1}{\p x_i}, v^{\e}\right)_{L_2(\Om^\e)}\right|\leqslant C\e^{-1} \|\nabla u^0\|_{L_2(\Pi^{R_3\e})} \|v^{\e}\|_{L_2(\Pi^{R_3\e}\setminus\tht^\e)}
\\
&\hphantom{\sum A}\leqslant  C\e^{\frac{1}{2}} \big(|\ln\eta|^{\frac{1}{2}}+1\big) \|u^0\|_{W_2^2(\Om)} \|\nabla v^{\e}\|_{L_2(\Om^\e)}
\leqslant  
C\e^{\frac{1}{2}} \big(|\ln\eta|^{\frac{1}{2}}+1\big) \|f\|_{L_2(\Om)} \|v^{\e}\|_{W_2^1(\Om^\e)}.
\end{align*}
The other terms in $S^\e$ are estimated in the same way,
\begin{align*}
&\left|\sum\limits_{i,j=1}^{2} \left(A_{ij}u^0\frac{\p \chi^{\e}_1}{\p x_j},\frac{\p v^{\e}}{\p x_i}\right)_{L_2(\Om^\e)} \right|\leqslant C\e^{-1}\|u^0\|_{L_2(\Pi^{R_3\e})}\|v^{\e}\|_{W_2^1(\Om^\e)}
\leqslant C \e^{\frac{1}{2}} \|f\|_{L_2(\Om)} \|v^{\e}\|_{W_2^1(\Om^\e)},
\\
&\left|
\sum\limits_{j=1}^{2} \left(A_j u^0 \frac{\p\chi^{\e}_1}{\p x_j},v^{\e}\right)_{L_2(\Om^\e)}
-\sum\limits_{j=1}^{2} \left(u^0, A_j v^{\e} \frac{\p\chi^{\e}_1}{\p x_j}\right)_{L_2(\Om^\e)}\right|
\\
&\hphantom{\sum\limits_{j=1}^{2}\bigg(}
\leqslant C\e^{-1} \|u^0\|_{L_2(\Om)} \|v^{\e}\|_{L_2(\Pi^{R_3\e}\setminus\tht^\e)}
\leqslant C\e^{\frac{3}{2}} \big(|\ln\eta|^{\frac{1}{2}}+1\big)  \|f\|_{L_2(\Om)} \|v^{\e}\|_{W_2^1(\Om^\e)}.
\end{align*}
Last four inequalities imply the estimate for the right hand side of (\ref{3.4}):
\begin{equation*}
\Big|(\chi^{\e}_1f,v^{\e})_{L_2(\Om^\e)}+S^\e\Big| \leqslant C\e^{\frac{1}{2}} \big(|\ln\eta|^{\frac{1}{2}}+1\big) \|f\|_{L_2(\Om)} \|v^\e\|_{W_2^1(\Om^\e)}.
\end{equation*}
Now we apply Lemma~\ref{lm1.3} and arrive at the inequality
\begin{equation}\label{4.5}
\|v^{\e}\|_{W_2^1(\Om^\e)}\leqslant C\e^{\frac{1}{2}} \big(|\ln\eta(\e)|^{\frac{1}{2}}+1\big)  \|f\|_{L_2(\Om)}.
\end{equation}

It remains to estimate the norm $\|\chi^{\e}_1 u^0\|_{W_2^1(\Om^\e)}$ to complete the proof. Employing Lemmata~\ref{lm1.2},~\ref{lm3.1}, one can check easily that
\begin{align*}
&\|\chi^{\e}_1 u^0\|_{L_2(\Om^\e)}\leqslant C\e^{\frac{3}{2}}\|f\|_{L_2(\Om)},
\\
& \|\nabla \chi^{\e}_1 u^0\|_{L_2(\Om^\e)} \leqslant C\left(
 \|\chi^{\e}_1 \nabla u^0\|_{L_2(\Om^\e)} + \e^{-1}\|u^0\|_{L_2(\Om^\e)}\right) \leqslant C\e^{\frac{1}{2}} \|f\|_{L_2(\Om)}.
\end{align*}
These inequalities and (\ref{4.5}) imply (\ref{4.0a}) that completes the proof.

\section{Robin condition} 

In this section we prove Theorems~\ref{th2.2},~\ref{th2.3}. We begin with Theorem~\ref{th2.2}.

\subsection{Proof of Theorem~\ref{th2.2}}

Let $f\in L_2(\Om)$, $u^\e:=(\ope-\iu)^{-1}f$, $u^0:=(\opo{}-\iu)^{-1}f$, $v^\e:=u^\e-u^0$.
The desired estimates for the resolvents are equivalent to
\begin{align}
&\|u^\e-u^0\|_{W_2^1(\Om^\e)}\leqslant C \eta(\e)\big(|\ln\eta|^{\frac{1}{2}}+1\big)\|f\|_{L_2(\Om)}, && a\not\equiv 0,\quad \eta\to+0,\label{5.0a}
\\
&
\|u^\e-u^0\|_{W_2^1(\Om^\e)}\leqslant C \e^{\frac{1}{2}}\eta(|\ln\eta|^{\frac{1}{2}}+1)\|f\|_{L_2(\Om)}, && a\equiv 0.\label{5.0b}
\end{align}
In what follows we prove the above estimates.

By the assumption $\mathds{M}_0^\e=\emptyset$ we have $\tht^\e_0=\emptyset$, $\tht^\e_1=\tht^\e$. Since $u^0\in W_2^2(\Om)$, by the standard embedding theorems the function $\left(\frac{\p\hphantom{N}}{\p N^\e}+a\right)u^0$ belongs to $L_2(\p\tht^\e)$. Then function $v^\e$ is the generalized solution to the boundary value problem
\begin{align*}
&\left(- \sum\limits_{i,j=1}^{2} \frac{\p\hphantom{x}}{\p x_i} A_{ij} \frac{\p\hphantom{x}}{\p x_j} + \sum\limits_{j=1}^{2} A_j\frac{\p\hphantom{x}}{\p x_j}-\frac{\p\hphantom{x}}{\p x_j}\overline{A_j} + A_0- \iu\right)v^\e=0\quad\text{in}\quad \Om^\e,
\\
&\hphantom{\Bigg(-}v^\e=0\quad \text{on}\quad \p\Om,\qquad \left(\frac{\p\hphantom{N}}{\p N^\e}+a\right)v^\e=-\left(\frac{\p\hphantom{N}}{\p N^\e}+a\right)u^0\quad \text{on}\quad \p\tht^\e.
\end{align*}
Taking $v^\e$ as the test function, we write the associated integral identity
\begin{equation}\label{5.1}
\foe(v^\e,v^\e)-\iu\|v^\e\|_{L_2(\Om^\e)}^2=-\left(\left(\frac{\p\hphantom{N}} {\p N^\e}+a\right)u^0,v^\e\right)_{L_2(\p\tht^\e)}.
\end{equation}
The main idea of our proof is to estimate the right hand side of this identity and to get then the desired estimate for $v^\e$.

Assume $\eta$ is arbitrary, not necessary small. It is clear that
\begin{equation}\label{5.2}
\left|\left(\left(\frac{\p\hphantom{N}} {\p N^\e}+a\right)u^0,v^\e\right)_{L_2(\p\tht^\e)}\right|\leqslant \left|\left( \frac{\p u^0} {\p N^\e} ,v^\e\right)_{L_2(\p\tht^\e)}\right|+C_*\|u^0\|_{L_2(\p\tht^\e)}\|v^\e\|_{L_2(\p\tht^\e)}.
\end{equation}
If $a\equiv0$,   constant $C_*$  vanishes.

Let us estimate the  term $\left( \frac{\p u^0} {\p N^\e} ,v^\e\right)_{L_2(\p\tht^\e)}$.  We first integrate by parts:
\begin{align*}
&\int\limits_{\widehat{B}^k_{1}} \overline{v^\e} \left(\sum\limits_{i,j=1}^{2} \frac{\p\hphantom{x}}{\p x_i} A_{ij} \frac{\p\hphantom{x}}{\p x_j} +\sum\limits_{j=1}^{2}\frac{\p\hphantom{x}}{\p x_j}\overline{A_j}\right)u^0\di x
=\left(\frac{\p u^0}{\p N^\e},v^\e\right)_{L_2(\tht^\e)}
\\
&- \left(\frac{\p u^0}{\p N^\e_*},v^\e\right)_{L_2(\p B^k_{1})} -\sum\limits_{i,j=1}^{2} \left(A_{ij} \frac{\p u^0}{\p x_j}, \frac{\p v^\e}{\p x_i}\right)_{L_2(\widehat{B}^k_{1})} + \sum\limits_{j=1}^{2}\left(u^0, A_j \frac{\p v^\e}{\p x_j}\right)_{L_2(\widehat{B}^k_{1})},
\end{align*}
where $\frac{\p\hphantom{N}}{\p N_*^\e}$ is introduced in the same way as $\frac{\p\hphantom{N}}{\p N^\e}$, but instead of $\p\om^\e_k$ we take $\p B^k_{1}$. Hence,
\begin{equation}
\begin{aligned}
&\left(\frac{\p u^0}{\p N^\e},v^\e\right)_{L_2(\tht^\e)}=
\int\limits_{\widehat{B}^k_{1}} \overline{v^\e} \left(\sum\limits_{i,j=1}^{2} \frac{\p\hphantom{x}}{\p x_i} A_{ij} \frac{\p\hphantom{x}}{\p x_j}
+\sum\limits_{j=1}^{2}\frac{\p\hphantom{x}}{\p x_j}\overline{A_j}\right)u^0\di x
\\
&+ \left(\frac{\p u^0}{\p N^\e_*},v^\e\right)_{L_2(\p B^k_{1})} +\sum\limits_{i,j=1}^{2} \left(A_{ij} \frac{\p u^0}{\p x_j}, \frac{\p v^\e}{\p x_i}\right)_{L_2(\widehat{B}^k_{1})} - \sum\limits_{j=1}^{2}\left(u^0, A_j \frac{\p v^\e}{\p x_j}\right)_{L_2(\widehat{B}^k_{1})}.
\end{aligned}\label{5.14}
\end{equation}

We consider the boundary value problem
\begin{equation*}
\D U_{k,i}^\e=0\quad\text{in}\quad B^k_{b_*}\setminus B^k_{1},
\quad
\frac{\p U_{k,i}^\e}{\p r}=\frac{\z^k_i}{r}\quad\text{on}\quad \p B^k_{1},\qquad \frac{\p U_{k,i}^\e}{\p r}=0\quad\text{on}\quad \p B^k_{b_*},
\end{equation*}
where, we remind, $\z^k=(\z^k_1, \z^k_2):=x-y_k^\e$, $r=|\z^k|$. It has the explicit solution
\begin{equation*}
U_{k,i}^\e(x):=\frac{4}{(b+1)^2-4} \left(\z^k_i + \frac{(b+1)^2 R_2^2}{4}\e^2\eta^2 \frac{\z^k_i}{r^2}  \right),
\end{equation*}
satisfying the uniform pointwise estimate
\begin{equation}\label{5.10}
|\nabla U_{k,i}^\e|\leqslant C\quad\text{in}\quad B^k_{b_*}\setminus B^k_{1}.
\end{equation}
Then integrating  by parts in the identity
\begin{equation*}
0=\int\limits_{B^k_{b_*}\setminus B^k_{1}} \left(\sum\limits_{i,j=1}^{2} A_{ij} \frac{\p u^0}{\p x_j} \overline{v^\e} \D U_{k,i}^\e+ \sum\limits_{j=1}^2\overline{A_j} u^0 \overline{v^\e}  \D U_{k,j}^\e\right)\di x,
\end{equation*}
we get
\begin{equation*}
\left(\frac{\p u^0}{\p N^\e_*},v^\e\right)_{L_2(\p B^k_{1})} = -\int\limits_{B^k_{b_*}\setminus B^k_{1}} \left(\sum\limits_{i,j=1}^{2} \nabla U_{k,i}^\e \cdot\nabla A_{ij} \frac{\p u^0}{\p x_j} \overline{v^\e} + \sum\limits_{j=1}^{2} \nabla U_{k,j}^\e \cdot\nabla \overline{A_j} u^0 \overline{v^\e} \right)\di x.
\end{equation*}
By (\ref{5.14}), (\ref{5.10}), (\ref{1.5}), and (\ref{3.11}) with $v=v^\e$, $v=u^0$, $v=\frac{\p u^0}{\p x_i}$ it follows that
\begin{equation}
\begin{aligned}
\left|\left(\frac{\p u^0}{\p N^\e}, v^\e\right)_{L_2(\p\tht^\e)} \right|
 \leqslant & C \sum\limits_{k\in\mathds{M}^\e}\bigg( \|u^0\|_{W_2^1(\widehat{B}^k_{b_*})} \|\nabla v^\e\|_{L_2(\widehat{B}^k_{b_*})}
+ \|u^0\|_{W_2^2(\widehat{B}^k_{b_*})} \|v^\e\|_{L_2(\widehat{B}^k_{b_*})}\bigg)
\\
 \leqslant & C\e^{\frac{1}{2}}\eta (|\ln\eta|^{\frac{1}{2}}+1)\|f\|_{L_2(\Om^\e)} \|v^\e\|_{W_2^1(\Om^\e)}.
\end{aligned}\label{5.12}
\end{equation}
If $a\equiv0$,  we substitute the obtained identity into (\ref{5.2}) and since $C_*=0$, by identity (\ref{5.1}) and estimate (\ref{1.5}) we arrive at (\ref{5.0a}). It proves the theorem for the case $a\equiv0$.

If $a\not\equiv0$, inequality (\ref{3.12}) for $v^\e$, estimate (\ref{1.5}) and Lemma~\ref{lm7.2} for $u^0$ imply the estimate for the last term in the right hand side of (\ref{5.2}),
\begin{align*}
\|u^0\|_{L_2(\p\tht^\e)}\|v^\e\|_{L_2(\p\tht^\e)}\leqslant & C\eta(|\ln\eta|^{\frac{1}{2}}+1)\|u^0\|_{W_2^2(\Om^\e)}\|v^\e\|_{W_2^1(\Om^\e)} \\
\leqslant& C \eta(|\ln\eta|^{\frac{1}{2}}+1)\|f\|_{L_2(\Om)}\|v^\e\|_{W_2^1(\Om^\e)}.
\end{align*}
In the same way how (\ref{5.0a}) was obtained, the last estimate and (\ref{5.12}) follow (\ref{5.0b}) that proves the theorem for the case $a\not\equiv0$.

\subsection{Proof of Theorem~\ref{th2.3}}

Given $f\in L_2(\Om)$, we let $u^\e:=(\ope-\iu)^{-1}f$,  $u^0:=(\opo{\a  a}-\iu)^{-1}f$.  We need to prove the estimate
\begin{equation}\label{5.6a}
\|u^\e-u^0\|_{W_2^1(\Om^\e)}\leqslant C(\e^{\frac{1}{2}}+\vk)\|f\|_{L_2(\Om)}.
\end{equation}
At the same time, 
curve $\g$ can cross the holes while the functions in the domain of homogenized operator $\opo{\a a}$ have a jump of the normal derivative at this curve. It causes troubles in getting integral identity for $u^\e-u^0$ and in further estimating. This is why we consider curve $\widetilde{\g}$ and operator $\topo{\a^0 a}$ introduced in Section~2, see (\ref{6.2}). Curve $\widetilde{\g}$ does not intersect the holes and this fact allows us to get an estimate similar to (\ref{5.6a})  for $v^\e:=u^\e-\tu$, where $\tu:=(\topo{\a a}-\iu)^{-1}f$. After that we estimate the function $u^0-\tu$ by Lemma~\ref{lm6.1} and it gives (\ref{5.6a}).

Function $v^\e$ is a generalized solution  to the boundary value problem
\begin{align*}
&\left(- \sum\limits_{i,j=1}^{2} \frac{\p\hphantom{x}}{\p x_i} A_{ij} \frac{\p\hphantom{x}}{\p x_j} + \sum\limits_{j=1}^{2} A_j\frac{\p\hphantom{x}}{\p x_j}-\frac{\p\hphantom{x}}{\p x_j}\overline{A_j} + A_0- \iu\right)v^\e=0\quad\text{in}\quad \Om^\e,\qquad v^\e=0\quad \text{on}\quad \p\Om,
\\
&
\left(\frac{\p\hphantom{N}}{\p N^\e}+a\right)v^\e=-\left(\frac{\p\hphantom{N}}{\p N^\e}+a\right)\widetilde{u}^0\quad \text{on}\quad \p\tht^\e,
\qquad
[v^\e]_{\widetilde{\g}}=0,\quad \bigg[\frac{\p v^\e}{\p \widetilde{N}^0}\bigg]_{\widetilde{\g}}+\a  a \tu\big|_{\widetilde{\g}}=0.
\end{align*}
We write the associated integral identity with $v^\e$ as the test function,
\begin{equation}\label{6.7}
\foe(v^\e,v^\e)-\iu\|v^\e\|_{L_2(\Om^\e)}^2=
-\left( \frac{\p\tu} {\p N^\e},v^\e\right)_{L_2(\p\tht^\e)}-(a\tu,v^\e)_{L_2(\p\tht^\e)}
+(\alpha a\tu,v^\e)_{L_2(\widetilde{\g})}.
\end{equation}
Let us estimate the right hand side of this identity.

Proceeding as in (\ref{5.14}), (\ref{5.10}), (\ref{5.12})   and employing (\ref{6.3}) instead of (\ref{1.7}), we obtain
\begin{equation}\label{6.8}
\left|\left(\frac{\p \tu}{\p N^\e},v^\e\right)_{L_2(\tht^\e)}\right|\leqslant
C\e^{\frac{1}{2}}\eta(|\ln\eta|^{\frac{1}{2}}+1) \|f\|_{L_2(\Om)}\|v^\e\|_{W_2^1(\Om^\e)}.
\end{equation}

Let $\xi=(\xi_1,\xi_2)$ be Cartesian coordinates in $\mathds{R}^2$,  $\Xi:=\{\xi: |\xi_1|<b R_2,\ |\xi_2|<(b+1) R_2\}\subset \mathds{R}^2$.  We consider the Neumann boundary value problem
\begin{align*}
&\D Y=0\quad\text{in}\quad \Xi\setminus B_{b_*R_2\eta(0)},\qquad \frac{\p Y}{\p|\xi|}=1\quad\text{on}\quad \p B_{b_*R_2
\eta}(0),
\\
&\frac{\p Y}{\p\nu}=\frac{\pi (b+1)\eta}{2b} \quad\text{on}\quad \{\xi: |\xi_1|<b R_2,\ \xi_2=-(b+1) R_2\},
\\
&\frac{\p Y}{\p\nu}=0\hphantom{\pi(b+.)\eta} \quad\text{on}\quad \p\Xi\setminus\{\xi: |\xi_1|<b R_2,\ \xi_2=(b+1) R_2\},
\end{align*}
where $\nu$ is the outward normal to $\p\Xi$. This problem satisfies the solvability condition
\begin{equation*}
\int\limits_{\p B_{b_* R_2\eta}(0)} \frac{\p Y}{\p |\xi|}\di s=\int\limits_{ \{\xi: |\xi_1|<b R_2,\ \xi_2=-(b+1) R_2\}
} \frac{\p Y}{\p\nu}\di s.
\end{equation*}
There exists the unique generalized solution satisfying the identity
\begin{equation*}
\int\limits_{\Xi\setminus B_{b_*R_2\eta(0)}} Y \di \xi=0.
\end{equation*}
This solution belongs to  $\Hinf^1(\Xi\setminus B_{b_*R_2\eta}(0))$, see \cite[Ch. I\!I\!I, Sect. 12]{Ld}.

In a vicinity of each point $y_k^\e$  we introduce rescaled variables  $\xi^k=(\xi_1^k,\xi_2^k)$ by the rule
$x=\e 
\big(\xi_1^k\varrho'(s_k^\e)-\xi_2^k\nu^0(s_k^\e)
\big)+y_k^\e$. The axes $\xi_2^k=0$ and $\xi_1^k=0$ are directed along the tangential and normal vectors $\varrho'(s_k^\e)$ and $\nu^0(s_k^\e)$ to the curve $\g$ at the point $s_k^\e$, and the point $\xi^k=0$ is located at $y_k^\e$. We define $\Xi_k^\e:=\{x: \xi^k\in\Xi\}$, $Y_k^\e(x):=Y(\xi^k)$.

We make an integration by parts similar to (\ref{3.10b}):
\begin{align*}
0=\e
\int\limits_{\Xi_k^\e\setminus B_{b_*}^k} a \widetilde{u}^0 \overline{v^\e} \D Y_k^\e \di x= & \frac{\pi(b+1)\eta}{2 b} (a \widetilde{u}^0, v^\e)_{L_2(\Ups_k^\e)}-(a \widetilde{u}^0, v^\e)_{L_2(\p B^k_{b_*})}
\\
&-\e
 \int\limits_{\Xi_k^\e\setminus B_{b_*}^k} \nabla a \widetilde{u}^0 \overline{v^\e} \cdot \nabla Y_k^\e \di x,
\end{align*}
where $\Ups_k^\e:=\{x: |\xi_1^k|<b R_2, \xi_2^k=-(b+1)R_2\}$.
Employing this identity, Lemma~\ref{lm3.7}, (\ref{6.3}) and (\ref{3.11}) with $v=v^\e$, $v=\widetilde{u}^0$, $v=\frac{\p\widetilde{u}^0}{\p x_i}$, as in (\ref{5.12}) we obtain
\begin{equation}\label{5.16}
\bigg|(a \widetilde{u}^0,v^\e)_{L_2(\p\tht^\e)} -\sum\limits_{k\in\mathds{M}^\e} \frac{|\p\om_k|\eta}{2 b R_2} (a \widetilde{u}^0,v^\e)_{L_2(\Ups_k^\e)}\bigg| \leqslant C\e^{\frac{1}{2}}\|f\|_{L_2(\Om^\e)} \|v^\e\|_{W_2^1(\Om^\e)}.
\end{equation}

For each $|\xi_1^k|<b R_2$, by $\tau_k^\e(\xi_1^k)$ we denote the value of the variable $\tau$ corresponding to the point $x=\e\xi_1^k\varrho'(s_k^\e)-(b+1) R_2\e \nu^0(s_k^\e)$. It is easy to check that
\begin{equation}\label{6.12}
|\tau_j^\e(\xi_1^j)-(b+1) R_2\e\eta|\leqslant C\e^2
\end{equation}
uniformly in $k\in\mathds{M}^\e$,  sufficiently small $\e$, and $|\xi_1^k|<b R_2$. We also observe that the integration over $\widetilde{\g}$ can be expressed as the integration w.r.t. $s\in\mathds{R}$ with the differential
$(1-(b+1)R_2\e K(s))\di s$,
where  function $K$ was introduced in the proof of Lemma~\ref{lm6.1}. Integrating by parts, we have
\begin{align*}
\sum\limits_{k\in\mathds{M}^\e} \frac{|\p\om_k|\eta}{2 b R_2} (a\tu,v^\e)_{L_2(\Ups_j^\e)}= &\sum\limits_{k\in\mathds{M}^\e} \frac{|\p\om_k|\eta}{2 b R_2} \int\limits_{s_k^\e-b R_2\e \eta}^{s_k^\e + b R_2\e \eta} (a\tu \overline{v^\e})\big|_{\tau=-(b+1)R_2\e}\di s
\\
&+ \sum\limits_{k\in\mathds{M}^\e} \frac{|\p\om_k|\eta}{2 b R_2} \int\limits_{s_k^\e-b R_2\e}^{s_k^\e + b R_2\e}
\di s \int\limits_{-(b+1)R_2\e}^{\tau_k^\e\left(\frac{s-s_k^\e}{\e}\right)}
\frac{\p\hphantom{\tau}}{\p\tau} (a\tu \overline{v^\e})\di\tau.
\end{align*}
Now we employ the definition of function $\a_\e$, estimates (\ref{3.21}) with $v=v^\e$ and $v=\widetilde{u}^0$, (\ref{3.15}), (\ref{6.3}), (\ref{5.16}), (\ref{6.12}) to obtain
\begin{equation}\label{6.1}
\bigg|(a \widetilde{u}^0,v^\e)_{L_2(\p\tht^\e)} - (\a_\e  a \widetilde{u}^0,v^\e)_{L_2(\widetilde{\g})}\bigg|\leqslant C\e^{\frac{1}{2}}\|f\|_{L_2(\Om^\e)} \|v^\e\|_{W_2^1(\Om^\e)}.
\end{equation}

Our final step is

\begin{lemma}\label{lm6.3}
Function $\alpha_\e$ is bounded uniformly in $\e$ in the norm of space $L_\infty(\g)$. The estimate
\begin{equation*}
\big|\big((\a^\e-\a)a\widetilde{u}^0,v^\e\big)_{L_2(\widetilde{\g})}\big|\leqslant C\vk(\e)\|f\|_{L_2(\Om)}\|v^\e\|_{W_2^1(\Om^\e)}
\end{equation*}
holds true.
\end{lemma}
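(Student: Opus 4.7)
The statement has two parts, and I would dispatch them in sequence. For the uniform boundedness of $\alpha^\e$, recall that by Assumption \ref{B1} one has $|\p\om_k|\le L$ for every $k\in\mathds{M}^\e$. Inserting this into the definition of $\alpha^\e$ in Theorem~\ref{th2.3} gives the pointwise bound $\alpha^\e(s)\le L\eta/(2bR_2)$, uniform in $\e$ and $s$; since $\alpha\in\Hinf^1(\g)\subset L_\infty(\g)$, the same is true for $\alpha^\e-\alpha$.

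The core of the lemma is the second estimate. My plan is to interpret the left-hand side of \eqref{2.17} as the square of a weighted negative Sobolev norm and then use duality. Parametrizing $\widetilde\g$ by the arclength $s$ of $\g$ (the Jacobian $1-(b+1)R_2\e K(s)$ is uniformly bounded), I transfer the integral on $\widetilde\g$ to an integral in $s$. On each interval $I_n:=(n,n+\ell)$ I expand the smooth factor $F^\e(s):=a(s)\widetilde u^0(s)\overline{v^\e(s)}|_{\widetilde\g}$ in the orthonormal Fourier basis $\{\ell^{-1/2}\E^{\iu q(s-n)/(2\pi\ell)}\}_{q\in\mathds Z}$; Parseval and Cauchy--Schwarz then give
\begin{equation*}
\bigg|\int\limits_{I_n} (\alpha^\e-\alpha)F^\e\di s\bigg|
\le
\bigg(\sum\limits_{q}\frac{|\widehat{F^\e_q}|^2(|q|+1)}{\ell}\bigg)^{1/2}
\bigg(\sum\limits_{q}\frac{1}{|q|+1}\bigg|\int\limits_{I_n}(\alpha^\e-\alpha)\E^{-\frac{\iu q(s-n)}{2\pi\ell}}\di s\bigg|^2\bigg)^{1/2}.
\end{equation*}
By Assumption \ref{A10} the second factor is bounded by $\varkappa(\e)$, while the first is, up to a constant, the $W_2^{1/2}(I_n)$-norm of $F^\e$.

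The next step is to estimate $\|F^\e\|_{W_2^{1/2}(I_n)}$. Here $\widetilde u^0|_{\widetilde\g}$ lies in $W_2^{3/2}(\widetilde\g)$ by the trace theorem applied to \eqref{6.3}, and is continuously embedded in $L_\infty(\widetilde\g)$; the coefficient $a$ is Lipschitz; and $v^\e|_{\widetilde\g}\in W_2^{1/2}(\widetilde\g)$ by the trace theorem on $v^\e\in W_2^1(\Om^\e)$ (recall $\widetilde\g$ avoids the holes by \ref{B1}). Combining via the standard multiplier estimate $\|uv\|_{W_2^{1/2}}\le C\|u\|_{W_2^{3/2}\cap L_\infty}\|v\|_{W_2^{1/2}}$ yields
\begin{equation*}
\|F^\e\|_{W_2^{1/2}(I_n)}\le C\,\|a\widetilde u^0\|_{W_2^{3/2}(I_n)}\,\|v^\e\|_{W_2^{1/2}(I_n)}.
\end{equation*}
If $\g$ is a closed finite curve there is a single interval and one applies \eqref{6.3} together with Lemma~\ref{lm1.3} to conclude. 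If $\g$ is infinite one sums the local estimates over $n\in\mathds Z$, using Cauchy--Schwarz in $n$ together with the global bounds $\sum_n\|a\widetilde u^0\|_{W_2^{3/2}(I_n)}^2\le C\|\widetilde u^0\|_{W_2^2(\Om\setminus\widetilde\g)}^2$ and $\sum_n\|v^\e\|_{W_2^{1/2}(I_n)}^2\le C\|v^\e\|_{W_2^1(\Om^\e)}^2$, which follow by additivity of Sobolev norms over a locally finite covering (the overlap is uniformly controlled). The outcome is precisely the claimed inequality.

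The main obstacle I anticipate is justifying the bilinear multiplier estimate $\|(a\widetilde u^0)\overline{v^\e}\|_{W_2^{1/2}}\le C\|a\widetilde u^0\|_{W_2^{3/2}}\|v^\e\|_{W_2^{1/2}}$ in the non-compact setting and verifying that the local-to-global passage for the fractional norm on $\widetilde\g$ does not lose a factor depending on $\e$; everything else is a bookkeeping of constants and a direct use of Assumption \ref{A10}. Once this is in place, the combination of the duality estimate, \eqref{6.3}, and Lemma~\ref{lm1.3} produces the required $C\varkappa(\e)\|f\|_{L_2(\Om)}\|v^\e\|_{W_2^1(\Om^\e)}$ bound.
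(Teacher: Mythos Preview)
Your plan is correct and follows essentially the same route as the paper: localize to unit arcs of $\widetilde\g$, expand in Fourier series, interpret \eqref{2.17} as a uniform $W_2^{-1/2}$ bound on $\alpha^\e-\alpha$, pair it with the $W_2^{1/2}$-norm of $a\widetilde u^0\overline{v^\e}$, bound that norm by $\|\widetilde u^0\|_{W_2^2(\Pi^\e_n)}\|v^\e\|_{W_2^1(\Pi^\e_n)}$ via the trace theorem, and sum over $n$ by Cauchy--Schwarz. The only cosmetic difference is that the paper does not quote the multiplier estimate $\|uv\|_{W_2^{1/2}}\le C\|u\|_{W_2^{3/2}}\|v\|_{W_2^{1/2}}$ but instead expands all three factors in Fourier series and proves it on the spot by a weighted Cauchy--Schwarz on the convolution sum, together with the elementary bound $\sum_{p}\frac{1}{(1+|p|)^3(1+|p+q|)}\le \frac{C}{1+|q|}$; this also sidesteps your ``local-to-global'' concern, since the passage to bulk norms $W_2^2(\Pi^\e_n)$, $W_2^1(\Pi^\e_n)$ is done before summing in $n$.
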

\begin{proof}
The boundedness follows directly from the definition of function $\a^\e$. We shall prove the desired estimate only in the case of an infinite curve, since for a finite curve the proof is completely the same.

Denote $\widetilde{\g}_k^\e:=\{x:\ \tau=-\e(b+1)R_2,\ k<s<k+1\}$.  Since $\widetilde{u}^0\in W_2^2(\Om\setminus\widetilde{\g})$, $v^\e\in W_2^1(\Om^\e)$, the traces of these functions on $\widetilde{\g}_k^\e$ belong respectively to $W_2^{\frac{3}{2}}(\widetilde{\g}_k^\e)$ and $W_2^{\frac{1}{2}}(\widetilde{\g}_k^\e)$. We expand then these traces into Fourier series
\begin{align*}
&\widetilde{u}^0=\sum\limits_{p\in\mathds{Z}} c^{u,k}_p \E^{2\iu\pi p(s-k)},\quad \big(1-\e(b+1)K\big)a v^\e=\sum\limits_{p\in\mathds{Z}} c^{v,k}_p \E^{2\iu\pi p(s-k)},
\\
&\a^\e-\a=\sum\limits_{p\in\mathds{Z}} c^{\a,k}_p \E^{2\iu\pi p(s-k)}\quad\text{on}\quad \widetilde{\g}_k^\e,
\end{align*}
and we have the uniform estimates
\begin{align*}
&\sum\limits_{p\in\mathds{Z}} |c^{u,k}_p|^2 (|p|+1)^3 \leqslant C\|\widetilde{u}^0\|_{W_2^{\frac{3}{2}}(\widetilde{\g}_k^\e)}^2 \leqslant C \|\widetilde{u}^0\|_{W_2^2(\Pi_k^\e)}^2,
\\
&\sum\limits_{p\in\mathds{Z}} |c^{v,k}_p|^2 (|p|+1) \leqslant C\|\big(1-\e(b+1)K\big) a v^\e\|_{W_2^{\frac{1}{2}}(\widetilde{\g}_k^\e)}^2 \leqslant C \|v^\e\|_{W_2^1(\Pi_k^\e)}^2,
\end{align*}
where $\Pi_k^\e:=\{x: -\tau_0/2<\tau<-\e(b+1)R_2,\ k<s<k+1\}$. We employ the above expansions and estimates together with the Cauchy-Schwarz inequality  as follows,
\begin{equation}\label{6.6}
\begin{aligned}
\big|\big((\a^\e&-\a)a\widetilde{u}^0,v^\e\big)_{L_2(\widetilde{\g})}\big|\leqslant \sum\limits_{k\in\mathds{Z}} \big|\big((\a^\e-\a)a\widetilde{u}^0,v^\e\big)_{L_2(\widetilde{\g}_k^\e)}\big|=
\sum\limits_{k\in\mathds{Z}}\left|\sum\limits_{p,q\in\mathds{Z}} c^{u,k}_p \overline{c^{v,k}_q} c^\a_{q-p}\right|
\\
&\leqslant \sum\limits_{k\in\mathds{Z}} \left(\sum\limits_{p,q\in\mathds{Z}} |c^{u,k}_p|^2 |c^{v,k}_q|^2 (1+|p|)^3(1+|q|)\right)^{\frac{1}{2}} \left(\sum\limits_{p,q\in\mathds{Z}}  \frac{|c^{\a,k}_{q-p}|^2}{(1+|p|)^3(1+|q|)} \right)^{\frac{1}{2}}
\\
&\leqslant C \|\widetilde{u}^0\|_{W_2^2(\Om\setminus\widetilde{\g})} \|v^\e\|_{W_2^1(\Om^\e)} \sup\limits_{k\in\mathds{Z}}\left(\sum\limits_{p,q\in\mathds{Z}}  \frac{|c^{\a,k}_{q}|^2}{(1+|p|)^3(1+|p+q|)} \right)^{\frac{1}{2}}.
\end{aligned}
\end{equation}
Let us estimate the supremum in the last inequality by $C\vk$. Indeed, for each $k\in \mathds{Z}$,
\begin{equation*}
\sum\limits_{p,q\in\mathds{Z}}  \frac{|c^{\a,k}_{q}|^2}{(1+|p|)^3(1+|p+q|)}\leqslant C \sum\limits_{q\in\mathds{Z}} |c^{\a,k}_{q}|^2 \int\limits_{\mathds{R}} \frac{\di t}{(1+|t|)^3(1+|t+q|)}
\leqslant C \sum\limits_{q\in\mathds{Z}} \frac{|c^{\a,k}_{q}|^2}{|q|+1}.
\end{equation*}
Now it remains to employ (\ref{2.17}) and (\ref{6.6}) to complete the proof.
\end{proof}

The proven lemma and (\ref{6.8}),  (\ref{6.1}) yield that the right hand side of (\ref{6.7}) is estimated by $C(\e^{\frac{1}{2}}+\vk)\|f\|_{L_2(\Om)}\|v^\e\|_{W_2^1(\Om^\e)}$. By Lemma~\ref{lm1.3} it leads us to the estimate
\begin{equation*}
\|v^\e\|_{W_2^1(\Om^\e)}\leqslant C(\e^{\frac{1}{2}}+\vk)\|f\|_{L_2(\Om)}.
\end{equation*}
This estimate, the definition of $v^\e$, and (\ref{6.4}) imply (\ref{5.6a}) that completes the proof.

\section{
Homogenized delta-interaction for Dirichlet condition}

This section is devoted to the proof of Theorem~\ref{th2.4}.
Since here homogenized operator $\opo{\b}$ involves boundary condition (\ref{2.13}), as in the proof of Theorem~\ref{th2.3}, we introduce operator $\topo{\b}$ with $\b$ defined in the statement of the theorem.  Given $f\in L_2(\Om)$, we let $u^\e:=(\ope-\iu)^{-1}f$,   $\tu:=(\topo{\beta}-\iu)^{-1}f$, $v^\e:=u^\e-\tu$.

At the first step we estimate $W_2^1(\Om^\e)$-norm of $v^\e$.
Function $v^\e$ solves the boundary value problem
\begin{equation}\label{7.7}
\begin{aligned}
&\left(-\sum\limits_{i,j=1}^{2} \frac{\p\hphantom{x}}{\p x_i} A_{ij} \frac{\p\hphantom{x}}{\p x_j} + \sum\limits_{j=1}^{2} A_j\frac{\p\hphantom{x}}{\p x_j}-\frac{\p\hphantom{x}}{\p x_j}\overline{A_j} + A_0-\iu\right)v^\e=0\quad \text{in}\quad \Om^\e\setminus\widetilde{\g},
\\
&\hphantom{00}v^\e=0\quad\text{on}\quad \p\Om,\qquad v^\e=-\tu\quad\text{on}\quad \p\tht^\e,\qquad \quad [v^\e]_{\widetilde{\g}}=0,
\\
&\left(\frac{\p\hphantom{N^\e}}{\p N^\e}+a\right)v^\e=-\left(\frac{\p\hphantom{N^\e}}{\p N^\e}+a\right)\tu\quad\text{on}\quad \p\tht^\e_1,\qquad
\bigg[\frac{\p v^\e}{\p \widetilde{N}^0}\bigg]_{\widetilde{\g}}-\beta \tu\big|_{\widetilde{\g}}=0.
\end{aligned}
\end{equation}
As we see, function $v^\e$ does not satisfy homogeneous Dirichlet condition on $\p\tht^\e_0$. In order to simplify certain technical estimates, we add a special boundary corrector to $v^\e$ so that the sum vanishes on $\p\tht^\e_0$. Then employing the above boundary value problem, we shall obtain an integral identity for this sum and estimate its norm. We define the boundary corrector as follows.

We let
\begin{equation*}
\mathrm{A}(x)=
\begin{pmatrix}
A_{11}(x) & A_{12}(x)
\\
A_{12}(x) & A_{22}(x)
\end{pmatrix},\quad \mathrm{A}_k^\e:=\mathrm{A}(y_k^\e),
\end{equation*}
and for each $k\in\mathds{M}_0^\e$ by $\mathrm{Q}_k^\e$ we denote the matrix satisfying
\begin{equation}\label{7.5}
(\mathrm{Q}_k^\e)^t (\mathrm{Q}_k^\e)=(\mathrm{A}_k^\e)^{-1},\quad
(\mathrm{Q}_k^\e)^t\mathrm{A}_k^\e (\mathrm{Q}_k^\e)=\mathrm{E},
\end{equation}
where $\mathrm{E}$ is the unit matrix. Due to condition (\ref{2.3}), matrix $\mathrm{A}$ is symmetric, lower-semibounded, and bounded uniformly in $x\in\overline{\Om}$, and this is why matrix $\mathrm{Q}_k^\e$ is well-defined, symmetric, lower-semibounded, and bounded uniformly in $k$ and $\e$. Hence, we have the estimate
\begin{equation}\label{7.6}
0<C|z| \leqslant  |\mathrm{Q}_k^\e z | \leqslant C^{-1} |z|
\end{equation}
that is uniform in  $k\in\mathds{M}_0^\e$, $z\in\mathds{R}^2$, and sufficiently small $\e$.

For each $k\in\mathds{M}_0^\e$ we define the ellipses $E_r^k:=\{x: |\mathrm{Q}_k^\e\z^k|<\e r R_5\}$. Here $R_5$ is an absolute positive constant $R_5$ that exists due to (\ref{7.6}), (\ref{7.11}) and for all sufficiently small $\e$ and $k\in\mathds{M}_0^\e$
\begin{equation}\label{7.8}
\om_k^\e\subseteq E_{\eta}^k \subset E_{1}^k\subseteq B_{\frac{R_2}{2}\e}(y_k^\e).
\end{equation}

We define the function
\begin{equation}\label{7.31}
W^\e(x):=\left\{
\begin{aligned}
&\frac{1}{\ln\eta(\e)} \ln \frac{|\mathrm{Q}_k^\e\z^k|}{R_5\e}, \qquad x\in E_{1}^k\setminus E_{\eta}^k,\quad k\in\mathds{M}_0^\e,
\\
&\hphantom{-\frac{1}{\ln\eta(\e)}}1, \hphantom{ \frac{|\mathrm{Q}_k^\e\xi^k|}{R_5\e\eta}}\qquad x\in E_{\eta}^k,\hphantom{\setminus E_{1}^k,}\quad k\in\mathds{M}_0^\e,
\\
&\hphantom{-\frac{1}{\ln\eta(\e)}}0, \hphantom{ \frac{|\mathrm{Q}_k^\e\xi^k|}{R_5\e\eta }}\qquad \text{otherwise},
\end{aligned}
\right.
\end{equation}
where, we remind, $\z^k:=x-y_k^\e$. It is clear that $W^\e$ is infinitely differentiable  in $\overline{\Om}$ except the boundaries $\p E_{1}^k$ and $\p E_{\eta}^k$, $k\in\mathds{M}_0^\e$, and is continuous  in $\overline{\Om}$. Function $W^\e$ is bounded uniformly in $\overline{\Om}$ and satisfies the estimate
\begin{equation}\label{6.10}
0\leqslant W^\e\leqslant 1.
\end{equation}
An important property of $W^\e$ is that the function $\tv:=v^\e+\tu W^\e$ vanishes on $\p\tht^\e_0$.

We multiply the equation in (\ref{7.7}) by $\tv$ and integrate once by parts taking into consideration the boundary conditions for $v^\e$. Then we replace $v^\e$ by $\tv-\tu W^\e$. It yields the integral identity for $\tv$:
\begin{equation}\label{6.9}
\begin{aligned}
\fae(\tv,\tv)&+(a \tv, \tv)_{L_2(\p\tht^\e_1)} + (\b \tv, \tv)_{L_2(\widetilde{\g})}-\iu \|\tv\|_{L_2(\Om^\e)}^2
=(\b \tu, \tv)_{L_2(\widetilde{\g})}
\\
&-\left(
\left(\frac{\p\hphantom{N^\e}}{\p N^\e}+a\right)\tu,\tv
\right)_{L_2(\p\tht^\e_1)}+\fae(\tu W^\e,\tv)-\iu(\tu,\tv W^\e)_{L_2(\Om^\e)}.
\end{aligned}
\end{equation}
Let us estimate the right hand side of this identity.

By (\ref{2.12}),  $\mu(\e)\to+0$ as $\e\to+0$ and
\begin{equation}\label{7.11}
\eta(\e)=\E^{-\frac{1}{\e(\rho+\mu(\e))}}.
\end{equation}
Then it follows from (\ref{6.8}), (\ref{3.12}), (\ref{3.11}), (\ref{6.10}), and (\ref{6.3}) that
\begin{equation}\label{6.11}
\begin{aligned}
&\left|\left(
\left(\frac{\p\hphantom{N^\e}}{\p N^\e}+a\right)\tu, \tv
\right)_{L_2(\p\tht^\e_1)}\right|+\left|(\tu W^\e,\tv)_{L_2(\Om^\e)}\right|
\\
&\leqslant C\Big(\eta^{\frac{1}{2}}(|\ln\eta|^{\frac{1}{2}}+1)+\e\Big)\|f\|_{L_2(\Om)} \|v^\e\|_{W_2^1(\Om_\e)}\leqslant C\e \|f\|_{L_2(\Om)} \|v^\e\|_{W_2^1(\Om_\e)}.
\end{aligned}
\end{equation}
The definition of form $\fao^\e$ yields that
\begin{align}
&\fae(\tu W^\e,\tv)=S_1^\e+S_2^\e+S_3^\e,\label{6.13}
\\
&S_1^\e:=\sum\limits_{k\in\mathds{M}_0^\e}  ( \mathrm{A}_k^\e\tu \nabla W^\e,\nabla \tv)_{L_2(E^k_{1}\setminus E^k_{\eta})},
\quad
S_2^\e:=\sum\limits_{k\in\mathds{M}_0^\e} \sum\limits_{j=1}^{2} \left(A_j \tu \frac{\p W^\e}{\p x_j},\tv\right)_{L_2(E^k_{1}\setminus E^k_{\eta})},\nonumber
\\
&S_3^\e:=\sum\limits_{k\in\mathds{M}_0^\e}  \left( (\mathrm{A}-\mathrm{A}_k^\e) \tu \nabla  W^\e, \nabla \tv \right)_{L_2(E^k_{1}\setminus E^k_{\eta})}  + \sum\limits_{k\in\mathds{M}_0^\e}  \left(\mathrm{A} W^\e \nabla \tu, \nabla \tv \right)_{L_2(E^k_{1}\setminus E^k_{\eta})} \nonumber
\\
& \hphantom{S_2^\e:=}
  + \sum\limits_{k\in\mathds{M}_0^\e} \sum\limits_{j=1}^{2} \left(A_j W^\e\frac{\p \tu}{\p x_j}, \tv\right)_{L_2(E^k_{1}\setminus E^k_{\eta})}  + \sum\limits_{k\in\mathds{M}_0^\e} \sum\limits_{j=1}^{2} \left(W^\e \tu , A_j\frac{\p \tv}{\p x_j}  \right)_{L_2(E^k_{1}\setminus E^k_{\eta})}\nonumber \\
& \hphantom{S_2^\e:=}+ (A_0 W^\e \tu, \tv)_{L_2(E^k_{1}\setminus E^k_{\eta})}.\nonumber
\end{align}
As it follows from the definition of $W^\e$, in $E^k_{1}\setminus E^k_{\eta}$ this function satisfies the inequalities
\begin{equation}\label{6.15}
|\nabla W^\e|\leqslant \frac{C}{|\z^k||\ln\eta|},
\quad |(\mathrm{A}-\mathrm{A}_k^\e)\nabla W^\e|\leqslant \frac{C}{|\ln\eta|}.
\end{equation}
We employ these inequalities and (\ref{3.11}) with $\eta=1$, (\ref{6.3}), (\ref{6.10}) to estimate $S_3^\e$:
\begin{equation}\label{6.14}
|S_3^\e|\leqslant C\e^{\frac{1}{2}} \|f\|_{L_2(\Om)} \|v^\e\|_{W_2^1(\Om^\e)}.
\end{equation}

Lemma~\ref{lm7.2} and inequalities (\ref{3.11}) with $\eta=1$, (\ref{6.3}), and (\ref{6.15}) allow us to estimate $S_2^\e$:
\begin{equation}\label{6.16}
\begin{aligned}
|S_2^\e|\leqslant &  \left(\sum\limits_{k\in\mathds{M}_0^\e} \|v^\e\|_{L_2(E^k_{1}\setminus E^k_{\eta})}^2\right)^{\frac{1}{2}} \left(\sum\limits_{k\in\mathds{M}_0^\e} \|\tu\|_{C(\overline{B_{R_2\e}(y_k^\e)})}^2 \|\nabla W^\e\|_{L_2(E^k_{1}\setminus E^k_{\eta})}^2\right)^{\frac{1}{2}}
\\
\leqslant & C\e^{\frac{1}{2}} \| v^\e\|_{W_2^1(\Om^\e)} \|f\|_{L_2(\Om)}.
\end{aligned}
\end{equation}

Employing the definition of $W^\e$, we integrate by parts as follows:
\begin{gather}
S_1^\e=S_4^\e+S_5^\e+S_6^\e,\label{6.17}
\\
\begin{aligned}
S_4^\e:=&\frac{1}{\ln\eta}
\sum\limits_{k\in\mathds{M}_0^\e}
\left(\frac{\tu}{|(\mathrm{A}_k^\e)^{-1}\z^k|},\tv\right)_{L_2(\p E^k_{1})},\quad S_5^\e:=-\frac{1}{\ln\eta}\sum\limits_{k\in\mathds{M}_0^\e}\left( \frac{\tu}{|(\mathrm{A}_k^\e)^{-1}\z^k|},\tv\right)_{L_2(\p E^k_{\eta})},
\\
S_6^\e:=&-\sum\limits_{k\in\mathds{M}_0^\e}(\mathrm{A}_k^\e\nabla \tu, \tv\nabla W^\e)_{L_2(E^k_{1}\setminus E^k_{\eta})}=-\sum\limits_{k\in\mathds{M}_0^\e}\left(\nabla \tu,\frac{\z^k\tv}{|(\mathrm{A}_k^\e)^{-1}\z^k|}\right)_{L_2(\p E^k_{\eta})}
\\
&+\sum\limits_{k\in\mathds{M}_0^\e}\Big((\mathrm{A}_k^\e\nabla \tu, W^\e \nabla \tv)_{L_2(E^k_{1}\setminus E^k_{\eta})}+(\mathrm{div}\, \mathrm{A}_k^\e\nabla \tu, \tv W^\e)_{L_2(E^k_{1}\setminus E^k_{\eta})}\Big)
\end{aligned}\nonumber
\end{gather}
We make one more integration by part similar to (\ref{3.10b}):
\begin{align*}
0&=\sum\limits_{k\in\mathds{M}_0^\e} \frac{1}{\ln\eta} \int\limits_{B_{b_*R_2\e}(y_k^\e)\setminus E^k_{1}} \tu \overline{\tv} \mathrm{div}\, \mathrm{A}_k^\e \nabla \ln|\mathrm{Q}_k^\e\z^k|\di x=-S_4^\e+S_7^\e-S_8^\e,
\\
S_7^\e:&=\frac{1}{\ln\eta}\sum\limits_{k\in\mathds{M}_0^\e} (\tu, \phi_k^\e \tv)_{L_2(\p B_{b_*R_2\e}(y_k^\e))},
\quad
\phi_k^\e:=\nu\cdot \mathrm{A}_k^\e \nabla \ln|\mathrm{Q}_k^\e\z^k|,
\\
S_8^\e:&=\frac{1}{\ln\eta}\sum\limits_{k\in\mathds{M}_0^\e} (\nabla \tu, \tv \mathrm{A}_k^\e \nabla \ln|\mathrm{Q}_k^\e\z^k|)_{L_2(B_{b_*R_2\e}(y_k^\e))\setminus E^k_{1})}
\\
&\hphantom{=}
+\frac{1}{\ln\eta}\sum\limits_{k\in\mathds{M}_0^\e}
(\tu  \mathrm{A}_k^\e \nabla \ln|\mathrm{Q}_k^\e\z^k|, \nabla \tv)_{L_2(B_{b_*R_2\e}(y_k^\e)\setminus E^k_{1})},
\end{align*}
where $\nu$ is the outward normal to $\p B_{b_*R_2\e}(y^\e_k)$. Thus,
\begin{equation}\label{6.25}
S_1^\e=S_7^\e-S_8^\e+S_5^\e+S_6^\e.
\end{equation}

It follows from
(\ref{3.11}) with $\eta=1$, (\ref{6.10}), (\ref{6.3}) and the estimate
\begin{equation*}
|\nabla  \ln|\mathrm{Q}_k^\e\z^k|| \leqslant C\e^{-1}\quad \text{in}\quad B_{b_*R_2\e}(y_k^\e)\setminus E^k_{1}
\end{equation*}
that
\begin{equation}\label{6.23}
|S_6^\e|+|S_8^\e|\leqslant C\e^{\frac{1}{2}} \|f\|_{L_2(\Om)}\|\tv\|_{W_2^1(\Om^\e)}.
\end{equation}

To estimate $S_5^\e$, we first observe that since $\tv$ vanishes on $\p\om_k^\e$, $k\in\mathds{M}_0^\e$, we can extend it by zero inside $\om_k^\e$ keeping its $L_2$- and $W_2^1$-norms. By Assumption \ref{B1} we have that for some $R_6<1$ the inclusion $E_{R_6\eta}\subset\om_k^\e$  holds true and thus $\tv=0$ on $\p E_{R_6\eta}\subset\om_k^\e$. We then pass to the variables $\z^k\mapsto\mathrm{Q}_k^\e \z^k\e^{-1}\eta^{-1}$ and employ the estimate
\begin{equation*}
  \|v\|_{L_2(\p B_{R_5}(0))}\leqslant C\|\nabla v\|_{L_2(B_{R_5}(0)\setminus B_{R_6 R_5}(0))}
\end{equation*}
valid for each $v\in \Ho^1\big(B_{R_5}(0)\setminus B_{R_5 R_6}(0),\p B_{R_5}(0)\big)$. Finally it yields
\begin{equation}\label{6.18}
\|\tv\|_{L_2(\p E_{\eta}^k)}\leqslant C\e^{\frac{1}{2}}\eta^{\frac{1}{2}} \|\nabla \tv\|_{L_2(E_{\eta}^k\setminus \om^\e_k)},\quad k\in\mathds{M}_0^\e.
\end{equation}
Since $|(\mathrm{Q}_k^\e)^2\z^k|\geqslant C\e\eta$ on $\p E^k_{\eta}$, $C>0$, by (\ref{6.18}), (\ref{6.3}), and Lemma~\ref{lm7.2} we can estimate $S_5^\e$:
\begin{equation}\label{6.21}
|S_5^\e|\leqslant  C \e^{\frac{1}{2}}(\rho+\mu) \|f\|_{L_2(\Om)} \|\tv\|_{W_2^1(\Om^\e)}.
\end{equation}
Due to (\ref{6.11}), (\ref{6.13}), (\ref{6.14}), (\ref{6.16}), (\ref{6.17}), (\ref{6.23}), (\ref{6.21}) it remains to estimate the sum $(\b \tu, \tv)_{L_2(\widetilde{\g})}+S_7^\e$ in order to have the final inequality for the right hand side in (\ref{6.9}).

By straightforward calculations one can make sure that
\begin{equation*}
\int\limits_{\p B_{b_*R_2\e}(y^\e_k)} \phi^\e_k\di s= \frac{2\pi}{\det \mathrm{A}_k^\e}.
\end{equation*}
Proceeding then as in the proof of Lemma~\ref{lm3.7}, in $S_7^\e$ we can replace $\phi^\e_k$ by its mean value over $\p B^k_{b_*}$:
\begin{equation*}
\Big|S_7^\e-\frac{1}{\e\ln\eta}\sum\limits_{k\in\mathds{M}_0^\e} \frac{2}{(b+1)R_2\det \mathrm{A}_k^\e}(\tu,\tv)_{L_2(\p B_{b_*R_2\e}(y_k^\e))}
\Big|\leqslant C\e^{\frac{1}{2}}\|f\|_{L_2(\Om)}\|\tv\|_{W_2^1(\Om^\e)}.
\end{equation*}
Arguing as in the proof of (\ref{6.1}) and applying then Lemma~\ref{lm6.3}, one can make sure that
\begin{align*}
\Big|\frac{1}{\e\ln\eta}\sum\limits_{k\in\mathds{M}_0^\e} \frac{2}{(b+1)R_2\det \mathrm{A}_k^\e}(\tu,\tv)_{L_2(\p B_{b_*R_2\e}(y_k^\e))}&-(\b\tu,\tv)_{L_2(\widetilde{\g})}
\Big|
\\
&\leqslant C(\rho+\mu)\vk \|f\|_{L_2(\Om)}\|\tv\|_{W_2^1(\Om^\e)}.
\end{align*}
Two latter estimates, (\ref{6.11}), (\ref{6.13}), (\ref{6.14}), (\ref{6.16}), (\ref{6.17}), (\ref{6.25}), (\ref{6.23}), (\ref{6.21}), and Lemma~\ref{lm1.3} yield
\begin{equation}\label{6.24}
\|\tv\|_{W_2^1(\Om^\e)}\leqslant C\big(\e^{\frac{1}{2}}+\vk(\rho+\mu)\big)\|f\|_{L_2(\Om)}.
\end{equation}

It follows from Lemma~\ref{lm7.2} and (\ref{6.4}) that
\begin{equation*}
\left(\sum\limits_{k\in\mathds{M}_0^\e} \|u^0-\tu\|_{C(\overline{B_{R_2\e}(y_k^\e}))}^2\right)^{\frac{1}{2}}\leqslant C\|f\|_{L_2(\Om)}.
\end{equation*}
Hence, due to  the definition of $W^\e$ and (\ref{6.3}),
\begin{equation}
\begin{aligned}
\Bigg(\sum\limits_{k\in\mathds{M}_0^\e} & \|(u^0-\tu)\nabla W^\e\|_{L_2(E_{1}^k)}^2\Bigg)^{\frac{1}{2}}
\\
&\leqslant
\left(\sum\limits_{k\in\mathds{M}_0^\e} \|u^0-\tu\|_{C(\overline{B_{R_2\e}(y_k^\e)}}^2 \|\nabla W^\e\|_{L_2(E_{1}^k)}^2 \right)^{\frac{1}{2}}
\leqslant C\e^{\frac{1}{2}}\|f\|_{L_2(\Om)}.
\end{aligned}\label{6.26}
\end{equation}
Employing   (\ref{3.11}), (\ref{1.7}), and (\ref{6.10}), we obtain
\begin{equation*}
\left(\sum\limits_{k\in\mathds{M}_0^\e} \left(\|W^\e\nabla(u^0-\tu) \|_{L_2(E_{1}^k)}^2+ \|W^\e (u^0-\tu) \|_{L_2(E_{1}^k)}^2\right)\right)^{\frac{1}{2}}
\leqslant C\e^{\frac{1}{2}}\|f\|_{L_2(\Om)}.
\end{equation*}
Together with (\ref{6.24}), (\ref{6.26}) it yields
\begin{equation}\label{6.21a}
\|u^\e-(1-W^\e)u^0\|_{W_2^1(\Om^\e)}\leqslant C\big(\e^{\frac{1}{2}}+(\rho+\mu)\vk)\|f\|_{L_2(\Om)}
\end{equation}
that proves (\ref{2.21}).

Since $W^\e u^0$ vanishes outside $\p E^k_{1}$, $k\in\mathds{M}_0^\e$, by (\ref{6.10}), (\ref{3.11}), (\ref{1.7}) we get
\begin{equation*}
\|W^\e u^0\|_{L_2(\Om)} \leqslant \left(\sum\limits_{k\in\mathds{M}_0^\e}\|u^0\|_{L_2(E_1^k)}^2\right)^{\frac{1}{2}}
\leqslant C\e^{\frac{1}{2}} \|f\|_{L_2(\Om)}.
\end{equation*}
Together with  (\ref{6.21a}) it implies
\begin{equation}\label{7.30}
\|u^\e-u^0\|_{L_2(\Om^\e)}\leqslant C \big(\e^{\frac{1}{2}}+(\rho+\mu)\varkappa\big) \|f\|_{L_2(\Om)},
\end{equation}
and therefore (\ref{2.18a}) holds true.

To prove (\ref{2.19}), now it is sufficient to employ the obvious estimate
\begin{equation*}
\|(\opo{\beta}-\iu)^{-1}f-(\opo{\beta_0}-\iu)^{-1}f\|_{W_2^1(\Om)} \leqslant C\mu\|f\|_{L_2(\Om)}.
\end{equation*}

As $\rho=0$, by inequalities (\ref{3.11}), (\ref{1.5}) and Lemma~\ref{lm7.2} we get
\begin{align*}
\|\nabla W^\e& u^0\|_{L_2(\Om)}\leqslant \|W^\e\nabla u^0\|_{L_2(\Om)} + \|u^0\nabla W^\e\|_{L_2(\Om)}
\\
&\leqslant C \left(\e^{\frac{1}{2}} \|f\|_{L_2(\Om)} + \left(\sum\limits_{k\in\mathds{M}_0^\e}  \|u^0\nabla W^\e\|_{L_2(B_{R_2\e}(y_k^\e))}^2\right)^{\frac{1}{2}}
\right)
\\
&\leqslant C\left(\e^{\frac{1}{2}}\|f\|_{L_2(\Om)}+ \left(\sum\limits_{k\in\mathds{M}_0^\e} \|u^0\|_{C(\overline{B_{R_2\e}(y_k^\e)})}^2 \|\nabla W^\e\|_{L_2(B_{R_2\e}(y_k^\e))}^2\right)^{\frac{1}{2}}
\right)
\\
&\leqslant C\left(\e^{\frac{1}{2}}\|f\|_{L_2(\Om)}+ \e^{\frac{1}{2}}\mu^{\frac{1}{2}} \left(\sum\limits_{k\in\mathds{M}_0^\e} \|u^0\|_{C(\overline{B_{R_2\e}(y_k^\e)})}^2\right)^{\frac{1}{2}}
\right)
\leqslant C(\e^{\frac{1}{2}}+\mu^{\frac{1}{2}})\|f\|_{L_2(\Om)}.
\end{align*}
The obtained estimate and (\ref{6.21a}) with $\rho=0$ follow
\begin{equation*}
\|u^\e-u^0\|_{L_2(\Om^\e)}\leqslant C(\e^{\frac{1}{2}}+\mu^{\frac{1}{2}})\|f\|_{L_2(\Om)}
\end{equation*}
and it proves  (\ref{2.20}). The proof is complete.

\section{Spectrum}

In this section we   prove   Theorem~\ref{th2.5}. In $\tht^\e$ we introduce operator $\opt$ acting as $-\D$ subject to the Dirichlet condition; the associated form is $(\nabla u, \nabla v)_{L_2(\tht^\e)}$ on $\Ho^1(\tht_\e)$. Employing minimax principle and Assumption \ref{B1}, one can easily make sure that
\begin{equation}\label{8.4}
\inf\spec(\opt)\geqslant C\e^{-2}\eta^{-2}(\e),
\end{equation}
where $\spec(\opt)$ denotes the spectrum of operator $\opt$. Thus, we have the estimate
\begin{equation}\label{8.1}
\|(\opt-\iu)^{-1}\|_{L_2(\tht^\e)\to L_2(\tht^\e)}\leqslant C\e^{2}\eta^{2}(\e).
\end{equation}
Assuming the hypothesis of one of Theorems~\ref{th2.1},~\ref{th2.4},~\ref{th2.2},~\ref{th2.3}, by $\opo{*}$ we denote the corresponding homogenized operator. Estimates (\ref{3.11}) and (\ref{3.21}) imply
\begin{equation*}
\|(\opo{*}-\iu)^{-1} \|_{L_2(\Om)\to L_2(\tht^\e)}\leqslant C\e,
\end{equation*}
where $C$ is a constant independent of $\e$. Since $L_2(\Om)=L_2(\Om^\e)\oplus L_2(\tht^\e)$, the latter estimate and (\ref{8.1}) yield
\begin{equation*}
\|(\ope\oplus\opt-\iu)^{-1}-(\opo{*}-\iu)^{-1}\|_{L_2(\Om)\to L_2(\Om)}\to 0,\quad \e\to+0.
\end{equation*}
By \cite[Ch. V\!I\!I\!I, Sec. 7, Ths. V\!I\!I\!I.23, V\!I\!I\!I.24]{RS1} it follows the convergence of the spectrum of $\ope\oplus\opt$ to that of $\opo{*}$. And now it remains to employ (\ref{8.4}) to complete the proof of Theorem~\ref{th2.5}.

\section{Sharpness of estimates}

In this section we discuss the sharpness of the estimates established in Theorems~\ref{th2.1}--\ref{th2.3}. We show that estimates (\ref{2.5a}), (\ref{2.21}), (\ref{2.13c}) are order sharp, while other estimates are close to being sharp.

In order to study the sharpness, we need to know how the difference of the perturbed and homogenized resolvents behaves for at least one model fitting our assumptions. Of course, there is no chance to find the perturbed resolvents explicitly. Instead of this, we choose the perturbed operator so that it is possible to construct the asymptotic expansions for its resolvent. Namely, suppose that for some $f\in L_2(\Om)$ we know the first term of the asymptotic expansion for $u^\e:=(\ope-\iu)^{-1}f$:
\begin{equation}\label{8.1a}
u^\e(x)=u^0(x)+u^1(x,\e)+\ldots,
\end{equation}
where $u^0$ is the action of the corresponding homogenized resolvent on $f$, and $u^1$ is a some function. Assume also that the above expansion is true in $W_2^1(\Om^\e)$-norm. Then
\begin{equation*}
u^\e-u^0=u^1(x,\e)+\ldots,
\end{equation*}
and in the left hand side we have in fact the difference of perturbed and homogenized resolvents. And if the norm $\|u^1\|_{W_2^1(\Om^\e)}$ has the same smallness order as the corresponding estimate in Theorems~\ref{th2.1}--\ref{th2.3} states, then this estimate is order sharp.

In what follows we construct asymptotics (\ref{8.1a}) under the hypothesis of each of Theorems~\ref{th2.1}--\ref{th2.3}. To construct the required asymptotics we choose a very simple model. Namely, we assume that the width of strip $\Om$ is $d=\pi$ and curve $\g$ is just the straight line $\{x:\, x_2=\frac{\pi}{2}\}$. The perforation is periodic: $s^\e_k:=3\e k$, $k\in\mathds{Z}$, and $y_k^\e:=\{x:\, x_1=3\e\ k, \, x_2=\frac{\pi}{2}\}$. All the holes are just the unit balls: $\om_k:=B_1(0)$, and thus $\om_k^\e:=B_{\e\eta(\e)}(y_k^\e)$. Then Assumption~\ref{B1} is obviously satisfied with
$R_2=\frac{5}{4}$, $b=\frac{6}{5}$, $L=2\pi$. Assumption~\ref{B2} is also true and we can find functions $X_k$ explicitly:
\begin{equation*}
X_k(z)=\nabla_z\ln|z|=\frac{1}{|z|^2}z,\quad z=(z_1,z_2), \quad \vp_k\equiv \frac{11}{8}.
\end{equation*}

As the operator, we choose the Laplacian, i.e., the differential expression in (\ref{2.4}) is $-\D$. For simplicity, we impose the same condition on the boundaries of all the holes. It is either the Dirichlet condition ($\mathds{M}_0^\e=\mathds{M}^\e=\mathds{Z}$, $\mathds{M}_1^\e=\emptyset$), or the Robin condition
$\left(\frac{\p\hphantom{\nu}}{\p\nu}+a\right)u=0$ with constant $a$ ($\mathds{M}_1^\e=\mathds{M}^\e=\mathds{Z}$, $\mathds{M}_0^\e=\emptyset$).

The constructing of asymptotics consists of formal constructing and estimating error terms. The latter is very simple for our problem thanks to Lemma~\ref{lm1.3}. Once we construct a formal asymptotic solution to the equation $(\ope-\iu)u^\e=f$, it provides the asymptotics for the exact solution. This is why in what follows we dwell only on the formal constructing of first terms in the asymptotics. Our approach is based on
the method of matching asymptotic expansions \cite{Il}
and the multiscale method \cite{MS}. Further terms needed for estimating the error terms can be constructed in the same way.

In order to simplify our considerations, we shall also assume that
\begin{equation}\label{8.2}
u^0(x):=\chi_2(x_1) U(x_2),
\end{equation}
where $\chi_2$ is an infinitely differentiable cut-off function equalling one as $|x_1|<1$ and vanishing as $|x_2|>2$. Function $U$ vanishes as $x_2=0$ and $x_2=\pi$. It is infinitely differentiable in each of the segments $[0,\frac{\pi}{2}]$ and $[\frac{\pi}{2},\pi]$ and is continuous in $[0,\pi]$. In the vicinity of the point $x_2=\frac{\pi}{2}$ function $U$ is point-wise linear:
\begin{equation}\label{8.3}
\begin{aligned}
&U(x_2)=h_1^+\left(x_2-\frac{\pi}{2}\right)+h_0,\quad \frac{\pi}{2}\leqslant x_2\leqslant \frac{3\pi}{4},
\quad U(x_2)=h_1^-\left(x_2-\frac{\pi}{2}\right)+h_0,\quad \frac{\pi}{4}\leqslant x_2\leqslant \frac{\pi}{2},
\end{aligned}
\end{equation}
where $h_0$, $h_1^\pm$ are some constants which will be specified below.

We define $f$ as follows:
\begin{equation}\label{8.4a}
f(x)=(-\D-\iu) u^0(x),
\end{equation}
and this identity is understood pointwise in $\Om\setminus\g$. It is clear that function $f$ belongs to $L_2(\Om)$.

It should also stressed that although we have supposed that
curve $\g$ is infinite, all our arguments in what follows can be adapted easily for a finite curve $\g$.

\subsection{Dirichlet condition}

In this subsection we study the sharpness of the estimates in Theorems~\ref{th2.1},~\ref{th2.3}. On the boundaries of the holes we impose the Dirichlet condition.

We begin with constructing asymptotics under the hypothesis of Theorem~\ref{th2.1}. It is clear that our simple model described above satisfies Assumption~\ref{A5} with $R_3=5/2$. We also suppose that identity (\ref{2.1ab}) holds true. The homogenized operator has the Dirichlet condition on $\g$ and $u^0$ should vanish on $\g$. This is why we let $h_0=0$ in (\ref{8.3}). For simplicity we also let $h_1^\pm=\pm 1$. Our aim is to construct formally the asymptotic expansion for $u^\e:=(\ope-\iu)^{-1}f$, where $f$ is defined by (\ref{8.4a}). In what follows it is more convenient to regard $u^\e$ as the generalized solution to the boundary value problem
\begin{equation}\label{8.5}
(-\D-\iu)u^\e=f\quad\text{in}\quad \Om^\e,\qquad u^\e=0\quad\text{on}\quad\p\Om^\e.
\end{equation}
We construct the asymptotics for $u^\e$ by the method of matching asymptotic expansions as a combination of external and internal layers. The external one reads as
\begin{equation}\label{8.13}
u^\e_{\ex}(x)=u^0(x)+\e u^1(x)+\ldots
\end{equation}
Hereinafter by ``\ldots'' we denote next terms in various asymptotics.

Function $u^0$ does not vanish on $\p\tht^\e$. To obtain the required boundary condition for $u^\e$ on $\p\tht^\e$, we introduce rescaled variables by the rule $\xi=(\xi_1,\xi_2)$, $\xi_1:=x_1\e^{-1}$, $\xi_2:=(x_2-\frac{\pi}{2})\e^{-1}$. In the vicinity of $\g$ we construct an internal expansion:
\begin{equation}\label{8.6}
u^\e_{\bl}(x)=\e v^1(\xi,x_1)+\ldots
\end{equation}
We substitute this ans\"atz into boundary value problem (\ref{8.5}) and equate the coefficients at the like powers of $\e$. It leads to the boundary value problem for $v^1$:
\begin{equation}\label{8.7}
\D_{\xi}v^1=0\quad \text{in}\quad\mathds{R}^2\setminus\p\tht_\eta,\quad v^1=0\quad \text{on}\quad\p\tht_\eta.
\end{equation}
Here
\begin{equation*}
\tht_\eta:=\bigcup\limits_{k\in\mathds{Z}} \om_{\eta,k},\quad \om_{\eta,k}:=\{\xi:\, |\xi-(3\pi k,0)|<\eta\}.
\end{equation*}
Problem (\ref{8.7}) is written in terms of variables $\xi$; variable $x_1$ is regarded as an additional parameter not even explicitly involved in the formulation of the problem.

In accordance with the method of matching asymptotic expansions, the leading term in the asymptotics for $v_1$ as $\xi_2\to+\infty$ should coincide with similar term in the asymptotics for $u^0$ as $x_2\to0$. Since in our case
\begin{equation}\label{8.8a}
u^0(x)=\left|x_2-\frac{\pi}{2}\right|\chi_2(x_1),\quad \left|x_2-\frac{\pi}{2}\right|<\frac{\pi}{4},
\end{equation}
we conclude that $v_1$ should behave at infinity as follows:
\begin{equation}\label{8.8}
v^1(\xi,x_1)=\chi_2(x_1)|\xi_2|+\ldots,\quad \xi_2\to\pm\infty.
\end{equation}

Problem (\ref{8.7}), (\ref{8.8}) is a periodic one and this is why it is more convenient to study the corresponding problem in the periodicity cell
$\Pi_\eta:=\left\{\xi\in\mathds{R}^2:\,  |\xi_2|<\frac{3}{2}\right\}\setminus \tht_\eta$.
This problem reads as
\begin{equation}\label{8.9}
\D_\xi v^1=0\quad\text{in}\quad \Pi_\eta,\qquad v^1=0\quad \text{on}\quad \p\tht^0_\eta,
\end{equation}
while on the lateral boundaries of $\Pi_\eta$ periodic boundary conditions are imposed.  At infinity, we still assume asymptotics (\ref{8.8}).

To solve problem (\ref{8.9}), we first introduce an auxiliary function
\begin{equation*}
Z_0(\xi):=\frac{3}{\pi}\RE\ln 2\sin\frac{\pi}{3}(\xi_1+\iu\xi_2).
\end{equation*}
It is straightforward to check that this function is infinitely differentiable and harmonic in $\mathds{R}^2$ except the points $(3\pi k,0)$, $k\in\mathds{Z}$. At these points it has the logarithmic singularity
\begin{equation}\label{8.11a}
Z_0(\xi)=\frac{3}{\pi}\ln|\xi-(3\pi k,0)|+\frac{3}{\pi}\ln \frac{2\pi}{3} + O(|\xi-(3\pi k,0)|^2),\quad \xi\to(3\pi k,0).
\end{equation}
Function $Z_0$ is $3$-periodic in $\xi_1$. At infinity, it behaves as \begin{equation*}
Z_0(\xi)=\pm\xi_2+O(e^{-\frac{2\pi}{3}|\xi_2|}),\quad \xi_2\to\pm\infty.
\end{equation*}

\begin{lemma}\label{lm8.1a}
Problem (\ref{8.9}) has the unique periodic solution behaving at infinity as
\begin{equation}\label{8.11}
Z_\eta^D(\xi)=\pm\xi_2-\frac{3}{\pi}\ln \frac{2\pi\eta}{3}+O(e^{-\frac{2\pi}{3}|\xi_2|}),\quad \xi_2\to\pm\infty.
\end{equation}
This solution is $3$-periodic w.r.t. $\xi_1$ and can be represented as
\begin{equation}\label{8.11c}
Z_\eta^D(\xi)=Z_0(\xi)-\frac{3}{\pi} \ln \frac{2\pi\eta}{3}  + \wt{Z}_\eta^D(\xi),
\end{equation}
where function $\wt{Z}_\eta^D$ is $3$-periodic w.r.t. $\xi_1$, decays exponentially as $\xi_2\to\pm\infty$, and satisfies the uniform in $\eta$ estimate
\begin{equation}\label{8.11d}
\|\wt{Z}_\eta^D\|_{W_2^1(\Pi_\eta)}\leqslant C\eta.
\end{equation}
\end{lemma}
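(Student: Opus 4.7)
The plan is to take (8.11c) as the working ansatz and reduce the lemma to a Dirichlet problem for the correction $\wt Z_\eta^D$. Substituting (8.11c) into (8.9), $Z_0$ is harmonic in $\Pi_\eta$ (its only singularities in $\mathds{R}^2$ sit at the hole centers, all contained in the removed disks), so $\wt Z_\eta^D$ must be harmonic in $\Pi_\eta$ and $3$-periodic in $\xi_1$. Matching the non-decaying part $\pm\xi_2-\frac{3}{\pi}\ln\frac{2\pi\eta}{3}$ of the prescribed asymptotics (8.11) against the far-field $Z_0(\xi)=\pm\xi_2+O(e^{-2\pi|\xi_2|/3})$ forces $\wt Z_\eta^D$ to decay exponentially as $\xi_2\to\pm\infty$. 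The local expansion (8.11a) gives the Dirichlet data
\begin{equation*}
\wt Z_\eta^D\big|_{\p\om_{\eta,0}}=-Z_0+\frac{3}{\pi}\ln\frac{2\pi\eta}{3}=O(\eta^2),
\end{equation*}
with tangential derivative of order $O(\eta)$ along the $\eta$-circle.

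Existence and uniqueness of $\wt Z_\eta^D$ (equivalently, of $Z_\eta^D$) are standard. Uniqueness follows from the maximum principle: any difference of two solutions is harmonic in $\Pi_\eta$, vanishes on $\p\om_{\eta,0}$, is $3$-periodic in $\xi_1$, and tends to zero at $\pm\infty$; applying the maximum principle on the truncations $\Pi_\eta\cap\{|\xi_2|<M\}$ and sending $M\to\infty$ forces it to vanish. Existence is obtained by solving the Dirichlet BVPs on the same truncations with boundary values matching the prescribed asymptotics on $\xi_2=\pm M$ and passing to the limit $M\to\infty$, justified by the uniform energy bound derived below.

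For the estimate (8.11d) I use the variational principle with an explicit localized test function. Set $w(\xi):=Z_0(\xi)-\frac{3}{\pi}\ln|\xi|-\frac{3}{\pi}\ln\frac{2\pi}{3}$; by (8.11a) this $w$ extends harmonically to a neighborhood of the origin with $w(\xi)=O(|\xi|^2)$ and $|\nabla w(\xi)|=O(|\xi|)$. Define
\begin{equation*}
\phi(\xi):=-w(\xi)\,\chi(|\xi|/\eta),
\end{equation*}
where $\chi\in C^\infty(\mathds{R})$ equals $1$ on $[0,1]$ and vanishes on $[2,\infty)$. Then $\phi$ is supported in the annulus $\{\eta\leqslant|\xi|\leqslant 2\eta\}$ inside the cell, coincides with the Dirichlet data of $\wt Z_\eta^D$ on $\p\om_{\eta,0}$, and obeys the pointwise bounds $|\phi|\leqslant C\eta^2$, $|\nabla\phi|\leqslant C\eta$. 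Integrating over the annulus of area $O(\eta^2)$ yields $\|\phi\|_{W_2^1(\Pi_\eta)}\leqslant C\eta^2$. Since $\wt Z_\eta^D-\phi$ vanishes on $\p\om_{\eta,0}$, is periodic, and decays at infinity, it is admissible in the weak formulation for $\wt Z_\eta^D$, and this gives the gradient bound $\|\nabla\wt Z_\eta^D\|_{L_2(\Pi_\eta)}\leqslant\|\nabla\phi\|_{L_2(\Pi_\eta)}=O(\eta^2)$.

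The main obstacle will be upgrading this gradient bound into a full $W_2^1$-bound, because no uniform Poincar\'e inequality is available on $\Pi_\eta$ as $\eta\to 0$. I handle this by Fourier-expanding $\wt Z_\eta^D$ in the periodic variable $\xi_1$. Each nonzero mode $\hat u_n(\xi_2)$ is harmonic outside the hole region and decays, hence equals $c_n^\pm e^{\mp 2\pi|n|\xi_2/3}$ there, which yields the mode-by-mode Poincar\'e estimate $\|\hat u_n\|_{L_2(d\xi_2)}\leqslant\frac{3}{2\pi|n|}\|\partial_{\xi_2}\hat u_n\|_{L_2(d\xi_2)}$; summing in $n$ controls the mean-free part by $\|\nabla\wt Z_\eta^D\|_{L_2(\Pi_\eta)}$. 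The zeroth mode, also harmonic and decaying, is linear and hence identically zero outside a bounded $\xi_2$-interval containing the hole, so its support is compact and its $L_2$-norm is controlled by standard interior elliptic estimates in terms of $\|\nabla\wt Z_\eta^D\|_{L_2(\Pi_\eta)}+\|g_\eta\|_{L_\infty(\p\om_{\eta,0})}=O(\eta^2)$. Combining these pieces delivers (8.11d), in fact with the sharper bound $\|\wt Z_\eta^D\|_{W_2^1(\Pi_\eta)}\leqslant C\eta^2$.
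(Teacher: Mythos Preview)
Your proof is correct and takes a genuinely different route from the paper's, though both begin with the ansatz (8.11c) and reduce to a Dirichlet problem for the correction $\wt Z_\eta^D$ with $O(\eta^2)$ boundary data on the small circle.

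The paper proceeds by writing the boundary data as an angular Fourier series, building an explicit multipole harmonic extension $G_\eta$ in the annulus $\{\eta<|\xi|<1\}$, cutting it off, and then solving an inhomogeneous Laplace problem for the remainder $\wh Z_\eta^D$ with compactly supported right-hand side satisfying the orthogonality conditions that rule out linear growth at infinity; solvability and the uniform $W_2^2$ bound for $\wh Z_\eta^D$ are obtained by citing the machinery of \cite{MSb06}, \cite{VMU02}. This yields $\|\wt Z_\eta^D\|_{W_2^1(\Pi_\eta)}\leqslant C\eta$.

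Your argument is more elementary and self-contained: a crude (non-harmonic) cutoff extension $\phi$ plus the Dirichlet energy-minimization principle gives $\|\nabla\wt Z_\eta^D\|_{L_2(\Pi_\eta)}\leqslant\|\nabla\phi\|_{L_2}=O(\eta^2)$ directly, and then Fourier separation in $\xi_1$ for $|\xi_2|>\eta$ handles the $L_2$-norm. What your route buys is that no external solvability theory is invoked, and you actually obtain the sharper bound $C\eta^2$ rather than $C\eta$. One small imprecision: the Fourier decomposition is only meaningful on the full horizontal slices $|\xi_2|>\eta$, so the ``zeroth-mode'' discussion does not literally apply in the strip $|\xi_2|\leqslant\eta$; but the $L_2$-norm there is immediately controlled by the maximum principle ($\|\wt Z_\eta^D\|_{L_\infty}\leqslant C\eta^2$) times the area $O(\eta)$ of that strip, so the conclusion stands.
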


\begin{proof}
We construct function $Z_\eta^D$  by formula (\ref{8.11c}). For $\wt{Z}$ we obtain the boundary value problem
\begin{equation*}
\D_\xi\wt{Z}_\eta^D=0\quad\text{in}\quad\Pi_\eta,\qquad \wt{Z}_\eta^D=\eta g_\eta(\phi),
\end{equation*}
with periodic  conditions on the lateral boundaries. Here $(r,\phi)$ are polar coordinates associated with $\xi$, and function $g$ can be expressed as the sum of the series
\begin{equation*}
g_\eta(\phi)= \sum\limits_{m=2}^{\infty}\eta^{m-1} \big(a_m^c\cos m\phi+a_m^s\sin m\phi\big),
\end{equation*}
where $a_m^c$, and $a_m^s$ are some coefficients such that
\begin{equation*}
\sum\limits_{m=2}^{\infty} m^4\big(|a_m^c|^2+|a_m^s|^2\big)\leqslant C.
\end{equation*}
Then function
\begin{equation*}
G_\eta(\xi):=\eta\sum\limits_{m=2}^{\infty} \frac{\eta^{2m-1}}{r^m} \big(a_m^c\cos m\phi+a_m^s\sin m\phi\big)
\end{equation*}
is well-defined and belongs to $W_2^2(\{\xi:\,\eta<|\xi|<1\})$. Its norm in $W_2^2(\{\xi:\,\eta<|\xi|<1\})$ is bounded uniformly in $\eta$ and its trace on $\p\tht_\eta^0$ is exactly $g$.

We construct $\wt{Z}_\eta^D$ as
\begin{equation*}
\wt{Z}_\eta^D(\xi)=\eta\wh{Z}_\eta^D(\xi)+\eta\chi_1\left(\frac{r}{2}\right) G_\eta(\xi).
\end{equation*}
We recall that $\chi_1$ is the cut-off function introduced in the proof of Lemma~\ref{lm3.3b}. For function $\wh{Z}_\eta^D$ we get the boundary value problem
\begin{equation*}
\D_\xi \wh{Z}_\eta^D=-\D_\xi \chi_1\left(\frac{r}{2}\right)  G_\eta(\xi)\quad \text{in}\quad \Pi_\eta,\qquad \wh{Z}_\eta^D=0\quad\text{on}\quad \p\tht_\eta^0,
\end{equation*}
with periodic conditions on the lateral boundaries. It is straightforward to check that
\begin{gather*}
\supp\D_\xi\chi_1\left(\frac{r}{2}\right)G_\eta(\xi)\subseteq\{\xi:\,1/2\leqslant |\xi|\leqslant 1\},
\\
\int\limits_{\Pi_\eta} \D_\xi\chi_1\left(\frac{r}{2}\right)G_\eta(\xi)\di\xi =\int\limits_{\Pi_\eta} \xi_2 \D_\xi\chi_1\left(\frac{r}{2}\right)G_\eta(\xi)\di\xi=0.
\end{gather*}
Using these identities, by the technique employed in
\cite[Sect. 3]{MSb06}, \cite[Sect. 3]{VMU02}, it is possible to show that the above problem for $\wh{Z}_\eta^D$ is uniquely solvable, its solution belongs to $W_2^2(\Pi_\eta)$ and it is bounded uniformly in $\eta$ in the norm of this Sobolev space. Returning back to function $Z_\eta^D$, we complete the proof.
\end{proof}

By the proven lemma, problem (\ref{8.9}), (\ref{8.8}) is uniquely solvable and
\begin{equation}\label{8.12}
v^1(\xi,x_1)=\chi_2(x_1) Z_\eta^D(\xi).
\end{equation}
Due to (\ref{8.11}), the asymptotic behavior of $\e v^1$ is
\begin{equation*}
\e v^1(\xi,x_1)=\e\chi_2(x_1)|\xi_2|-\frac{3\e}{\pi}\ln\frac{2\pi\eta}{3} + O(\e e^{-\frac{2\pi}{3}|\xi_2|}),\quad \xi_2\to\pm\infty.
\end{equation*}
We rewrite this asymptotics in variables $x$ and by the method of matching asymptotic expansions we conclude that
\begin{equation*}
u^1(x)=-\frac{3}{\pi}\chi_2(x_1)\ln\frac{2\pi\eta}{3}+\ldots,\quad x_2\to\frac{\pi}{2}.
\end{equation*}
We also substitute (\ref{8.13}) into (\ref{8.5}) and equate the coefficients at $\e$. Together with the above asymptotics for $v^1$ it yields the boundary value problem for $u^1$:
\begin{gather}
(-\D-\iu)u^1=0\quad\text{in}\quad \Om\setminus\g,\qquad u^1=0\quad\text{on}\quad\p\Om,\label{8.14}
\\
u^1=-\frac{3}{\pi}\chi_2(x_1)\ln\frac{2\pi\eta}{3}\quad \text{on}\quad\g.\nonumber
\end{gather}
This problem is uniquely solvable.

We ``glue'' the external and internal expansions and obtain the leading terms of the asymptotics for $u^\e$:
\begin{align}
&u^\e(x)=\big(u^0(x)+\e u^1(x)\big)\big(1-\chi_2^\e(x_2)\big) +\e v^1\left(\frac{x_1}{\e},\frac{x_2-\frac{\pi}{2}}{\e},x_1\right) \chi_2^\e(x_2) + \ldots \label{8.15}
\\
&\chi^\e_2(x_2):= \chi_2\left(\left|x_2-\frac{\pi}{2}\right|\e^{-\frac{1}{2}}\right) \nonumber
\end{align}

Let us estimate from below $W_2^1(\Om^\e)$-norm of $u^\e-u^0$.
We let
\begin{equation*}
\Om^\e_1:=\Om^\e\cap\left\{x:\,|x_1|< 3\e\left[\frac{1}{3\e}-\frac{1}{2}\right]+\frac{3\e}{2},
\,\left|x_1-\frac{\pi}{2}\right|<\e^{\frac{1}{2}}\right\}
\end{equation*}
In view of  asymptotics (\ref{8.15}), the definition of $\chi_2$ and (\ref{8.8a}), (\ref{8.12}), for $x\in\Om^\e_1$ we have:
\begin{equation*}
u^\e(x)-u^0(x)= \e Z_\eta^D\left(\frac{x_1}{\e},\frac{x_2-\frac{\pi}{2}}{\e}\right) - \left|x_2-\frac{\pi}{2}\right| + \ldots
\end{equation*}
Thus,
\begin{equation}\label{8.16}
\begin{aligned}
\|u^\e-u^0\|_{W_2^1(\Om^\e)}^2
\geqslant & C \left\|\e\frac{\p\hphantom{x}}{\p x_2} Z_\eta^D
\left(\frac{x_1}{\e},\frac{x_2-\frac{\pi}{2}}{\e}\right) - \sgn\left(x_2-\frac{\pi}{2}\right)\right\|_{L_2(\Om^\e_1)}^2
\\
= & C \left\|\frac{\p Z_\eta^D}{\p\xi_2}
\left(\frac{x_1}{\e},\frac{x_2-\frac{\pi}{2}}{\e}\right) - \sgn\left(x_2-\frac{\pi}{2}\right)\right\|_{L_2(\Om^\e_1)}^2.
\end{aligned}
\end{equation}
We rewrite the last norm as the integral and pass to  variables $\xi$ introduced above. At that, we take into consideration that $Z_\eta^D$ is $3$-periodic function w.r.t. $\xi_1$:
\begin{equation}\label{8.18}
\|u^\e-u^0\|_{W_2^1(\Om^\e)}^2\geqslant C\e^2 \left(2\left[\frac{1}{3\e}-\frac{1}{2}\right]+1\right) \int\limits_{\Pi_\eta\cap\{\xi:\, |\xi_2|<\e^{-\frac{1}{2}}\}} \left|\frac{\p Z_\eta^D}{\p \xi_2}-\sgn \xi_2\right|^2\di\xi.
\end{equation}
It follows from (\ref{8.11a}), (\ref{8.11c}), (\ref{8.11d}) that
\begin{equation*}
\int\limits_{\Pi_\eta\cap\{\xi:\, |\xi_2|<\e^{-\frac{1}{2}}\}} \left|\frac{\p Z_\eta^D}{\p \xi_2}-\sgn \xi_2\right|^2\di\xi \geqslant C(|\ln\eta|+1).
\end{equation*}
It follows from two last inequalities that
\begin{equation*}
\|u^\e-u^0\|_{W_2^1(\Om^\e)}^2\geqslant C\e (|\ln\eta|+1)
\end{equation*}
and therefore, estimate (\ref{2.5a}) is order sharp.

We proceed to Theorem~\ref{th2.4}. Assume that identity (\ref{2.12}) holds true. It is easy to check that for our simple model $\a^\e\equiv 2\pi/3$ and thus, Assumption~\ref{A14} is satisfied for $\a\equiv 2\pi/3$, $\vk\equiv 0$.

Here the formal constructing follows the same lines as above. We just need to change the boundary condition for $u^0$; now it should be (\ref{2.13}) with $\b=\frac{2\pi}{3}(\rho+\mu)$. In (\ref{8.3}) we let
\begin{equation}\label{8.23}
h_0:=1,\quad h_1^+=-h_1^-:=\frac{\pi}{3}(\rho+\mu).
\end{equation}
Then for $x_2$ close to $\frac{\pi}{2}$
\begin{equation}\label{8.22}
u^0(x)=\chi_2(x_1) \left(\pm\frac{\pi}{3}(\rho+\mu)\left(x_2-\frac{\pi}{2}\right)+1\right)
= \chi_2(x_1) \left(\pm\frac{\pi}{3}(\rho+\mu)\e\xi_2+1\right).
\end{equation}
The internal layer is again introduced by (\ref{8.6}), while the formula for $v^1$ reads as
\begin{equation}\label{8.24}
v^1(\xi,x_1)=\frac{\pi}{3}(\rho+\mu) \chi_2(x_1)Z_\eta^D(\xi).
\end{equation}
Hence, by (\ref{8.11}) and (\ref{7.11}), we see that as $\xi_2\to\pm\infty$
\begin{equation*}
\e v^1(\xi,x_1)=\chi_2(x_1) \left(\pm \frac{\pi}{3}(\rho+\mu) \xi_2 +1- \e (\rho+\mu) \ln \frac{2\pi}{3} + O\big(\e (\rho+\mu) e^{-\frac{2\pi}{3}|\xi_2|}\big)\right).
\end{equation*}
Comparing these identities and (\ref{8.22}), by the method of matching asymptotic expansions we arrive at the boundary condition for $u^1$:
\begin{equation}\label{8.21}
u^1=-(\rho+\mu) \ln \frac{2\pi}{3}\quad\text{on}\quad\g.
\end{equation}
The equation and boundary condition on $\p\Om$ are the same as in (\ref{8.14}).

The leading terms of the asymptotics for $u^\e$ are determined by (\ref{8.15}) but with $u^0$ given by (\ref{8.2}), (\ref{8.3}), (\ref{8.23}), $v^1$ given by (\ref{8.24}) and $u^1$ being the solution to (\ref{8.14}), (\ref{8.21}).

Let us check the sharpness  of estimate (\ref{2.21}). We denote
\begin{equation*}
\Om^\e_2:=\Om^\e\cap\left\{x:\,|x_1|< 3\e\left[\frac{1}{3\e}-\frac{1}{2}\right]+\frac{3\e}{2},
\,\e<\left|x_1-\frac{\pi}{2}\right|<\e^{\frac{1}{2}}\right\}.
\end{equation*}
In our case, matrix $\mathrm{Q}^\e_k$ introduced in (\ref{7.5}) is the unit one: $\mathrm{Q}^\e_k=\mathrm{E}$ and (\ref{7.8}) is satisfied with $R_5=\frac{5}{8}$. Then it follows from definition (\ref{7.31}) of $W^\e$ that this function vanishes as $x\in\Om_2^\e$. Hence, by (\ref{8.15}), for $x\in\Om_2^\e$ we get
\begin{equation*}
u^\e(x)-(1-W^\e(x))u^0(x)=\frac{\pi}{3}(\rho+\mu)\left( \e Z_\eta^D\left(\frac{x_1}{\e},\frac{x_2-\frac{\pi}{2}}{\e}\right) - \left|x_2-\frac{\pi}{2}\right| \right) + \ldots
\end{equation*}
Proceeding as in (\ref{8.16}), (\ref{8.18}), it is straightforward to check that
\begin{equation*}
\|u^\e-(1-W^\e)u^0\|_{W_2^1(\Om^\e)}\geqslant C(\rho+\mu)\e.
\end{equation*}
If $\rho\not=0$, the obtained estimate means that the term $\e^{\frac{1}{2}}$ in the right hand side (\ref{2.21}) is order sharp. The second term $(\rho+\mu)\vk$, characterizes the non-periodicity of holes distribution. This is why we can not provide any example proving the sharpness of this term. At the same time, this term comes from Lemma~\ref{lm6.3}. All the estimates in the proof of this lemma are sharp and this is why the term $(\rho+\mu)\vk$ in (\ref{2.21}) is order sharp and it proves the sharpness of estimate (\ref{2.21}).

Estimates (\ref{2.18a}), (\ref{2.19}) are not order sharp since while proving them we have employed quite rough estimate. In particular, in the proof of (\ref{7.30}) we have estimated $L_2(\Om^\e)$-norm by $W_2^1(\Om^\e)$-norm. At the same time, by (\ref{8.15}), for $|x_2-\frac{\pi}{2}|>\pi/3$,
\begin{equation*}
u^\e(x)-u^0(x)=\e u^1(x)+\ldots
\end{equation*}
and thus
\begin{equation*}
\|u^\e-u^0\|_{W_2^1(\Om^\e)}\geqslant C\e.
\end{equation*}
In estimates (\ref{2.18a}), (\ref{2.19}) we have $\e^{\frac{1}{2}}$ instead of $\e$ and in this sense this term is not far from being sharp.
Since in (\ref{2.18a}), (\ref{2.19}) we estimate $L_2$-norm, the above arguments on sharpness of $\vk$ fail and we do not know whether this term is sharp or not. The term $\mu$ in (\ref{2.19}) is obviously sharp since operator $\opo{\b}$ depends holomorphically on $\mu$.

We proceed to the case $\rho=0$; here we study the sharpness of estimate (\ref{2.20}). We first note that as $\rho=0$,
\begin{equation*}
\|(\opo{b}-\iu)^{-1}-(\opo{}-\iu)^{-1}\|_{L_2(\Om)\to W_2^1(\Om^\e)}=O(\mu)
\end{equation*}
and this is why  in (\ref{2.20}) we can replace $\opo{}$ by $\opo{\b}$ and study then the sharpness of the obtained estimate with $\opo{\b}$.

We define $u^0$ by formulae (\ref{8.2}), (\ref{8.3}), (\ref{8.23}) with $\rho=0$. It follows from (\ref{8.15}), (\ref{8.24}) that for $x\in\Om_1^\e$
\begin{equation*}
u^\e(x)- u^0(x)=\frac{\pi}{3} \mu \left( \e Z_\eta^D\left(\frac{x_1}{\e},\frac{x_2-\frac{\pi}{2}}{\e}\right) - \left|x_2-\frac{\pi}{2}\right| \right) + \ldots
\end{equation*}
Proceeding then as in (\ref{8.16}), (\ref{8.18}), we obtain
\begin{equation*}
\|u^\e-u^0\|_{W_2^1(\Om^\e)}^2\geqslant C\mu
\end{equation*}
and it proves the sharpness of the term $\mu^{\frac{1}{2}}$ in (\ref{2.20}). We can not prove that the second term $\e^{\frac{1}{2}}$ is sharp. Moreover, we conjecture that the sharp estimate should involve the term $\mu^{\frac{1}{2}}$ only, while $\e^{\frac{1}{2}}$ should be absent. Unfortunately, our technique does not allow us to prove such estimate.

\subsection{Robin condition}

In this subsection we study the sharpness of the estimates in Theorems~\ref{th2.2},~\ref{th2.3}. In our simple model we impose Robin condition
$\left(\frac{\p\hphantom{\nu}}{\p\nu}+a\right)u=0$ with constant $a$ on the boundaries of the holes. Here the constructing of asymptotics follow the same ideas as for the Dirichlet case.

We begin with Theorem~\ref{th2.3}. Assume that $a\not\equiv0$, $\eta=const$. Function $\a^\e(s)$ introduced in Theorem~\ref{th2.3} is constant $\a^\e\equiv \frac{2\pi\eta}{3}$ and Assumption~\ref{A10} is satisfied with $\a\equiv \frac{2\pi\eta}{3}$, $\vk\equiv0$. In (\ref{8.3}) we let $h_0:=1$, $h_1^+=-h_1^-:=\frac{\pi a\eta}{3}$.

We introduce the external and internal layers as
\begin{equation}\label{8.26}
u^\e_{\ex}(x)=u^0(x)+\ldots,\quad u^\e_{\bl}(x)=v^0(\xi,x_1)+\e v^1(\xi,x_1)+\ldots
\end{equation}
We substitute the internal layer into (\ref{8.5}) and equate the coefficients at the like powers of $\e$. It leads to the boundary value problem for $v^0$, $v^1$:
\begin{align}\label{8.27}
&\D_\xi v^0=0\quad\text{in}\quad \mathds{R}^2\setminus\tht_\eta,\qquad \frac{\p v^0}{\p\nu}=0\quad\text{on}\quad \p\tht_\eta,
\\
\label{8.28}
&\D_\xi v^1=0\quad\text{in}\quad \mathds{R}^2\setminus\tht_\eta,\qquad \frac{\p v^1}{\p\nu}=-a v^0\quad\text{on}\quad \p\tht_\eta,
\end{align}
where $\nu$ is the inward normal to $\p\tht_\eta$. To determine the behavior at infinity for $v^0$, $v^1$, for $x_2$ close to $\frac{\pi}{2}$, we rewrite the formula for $u^0$ in terms of variables $\xi$ and compare the obtained expression with the internal layers (\ref{8.26}). It implies:
\begin{align}\label{8.29}
& v^0=\chi_2(x_1)+\ldots,\hphantom{\pm \frac{\pi a\eta}{3}\xi_2} \quad\xi_2\to\pm\infty,
\\
&v^1=\pm \frac{\pi a\eta}{3}\chi_2(x_1)\xi_2+\ldots,  \quad\xi_2\to\pm\infty. \label{8.30}
\end{align}

Problem (\ref{8.27}), (\ref{8.29}) has a constant solution:
$v^0(\xi,x_1)=\chi_2(x_1)$.

To solve the problem for $v^1$, we need an auxiliary lemma.

\begin{lemma}\label{lm8.2}
The problem
\begin{equation*}
\D_\xi Z_\eta^R=0\quad\text{in}\quad \Pi_\eta,\qquad \frac{\p Z_\eta^R}{\p\nu}=\frac{3}{\pi\eta}\quad\text{on}\quad \p\tht_\eta,
\end{equation*}
with periodic conditions on the lateral boundaries has the unique solution behaving at infinity as
\begin{equation*}
Z_\eta^R(\xi)=\pm\xi_2 + O(e^{-\frac{2\pi}{3}|\xi_2|}),\quad \xi_2\to\pm\infty.
\end{equation*}
This solution can be represented as
$Z_\eta^N(\xi)=Z_0(\xi)+\wt{Z}_\eta^R(\xi)$,
where function $\wt{Z}_\eta^R$ is $3$-periodic w.r.t. $\xi_1$, decays exponentially as $\xi_2\to\pm\infty$, and satisfies the uniform in $\eta$ estimate
\begin{equation*}
\|\wt{Z}_\eta^R\|_{W_2^1(\Pi_\eta)}\leqslant C\eta.
\end{equation*}
\end{lemma}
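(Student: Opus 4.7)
The plan is to mirror the proof of Lemma~\ref{lm8.1a}, now adapted to Neumann-type rather than Dirichlet boundary data. I would attempt the representation $Z_\eta^R=Z_0+\wt{Z}_\eta^R$, which automatically produces the required linear growth $\pm\xi_2+O(e^{-\frac{2\pi}{3}|\xi_2|})$ at infinity together with $3$-periodicity in $\xi_1$, leaving $\wt{Z}_\eta^R$ responsible only for correcting the boundary flux and for decaying exponentially. Substituting, $\wt{Z}_\eta^R$ must be harmonic in $\Pi_\eta$, $3$-periodic in $\xi_1$, and must satisfy on $\p\om_{\eta,0}$ the boundary datum
\begin{equation*}
\frac{\p\wt{Z}_\eta^R}{\p\nu}=\frac{3}{\pi\eta}-\frac{\p Z_0}{\p\nu}.
\end{equation*}
Using the local expansion $Z_0=\frac{3}{\pi}\ln|\xi|+\frac{3}{\pi}\ln\frac{2\pi}{3}+O(|\xi|^2)$ near the origin, the derivative of the logarithmic part precisely cancels the constant flux $\frac{3}{\pi\eta}$ on $r=\eta$, leaving a smooth $O(\eta)$ remainder containing no zero Fourier mode in $\phi$; the latter is exactly the compatibility condition $\int_{\p\om_{\eta,0}}\p_\nu\wt{Z}_\eta^R\di s=0$ required for solvability of the periodic Neumann problem on $\Pi_\eta$.

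Expanding this smooth boundary flux as a Fourier series $\sum_{m\geqslant 1}\eta^{m}(a_m^c\cos m\phi+a_m^s\sin m\phi)$ with coefficients uniformly bounded in $\eta$, I would then construct an explicit harmonic extension $F_\eta(\xi):=-\sum_{m\geqslant 1}\eta^{2m+1} m^{-1} r^{-m}(a_m^c\cos m\phi+a_m^s\sin m\phi)$, which is harmonic in $\{r>\eta\}$, decays at infinity, carries the prescribed Neumann data on $r=\eta$, and has $W_2^2(\{\eta<|\xi|<1\})$-norm of order $\eta$ uniformly in $\eta$. Setting $\wt{Z}_\eta^R=\chi_1(r/2)F_\eta+\eta\wh{Z}_\eta^R$, where $\chi_1$ is the cut-off introduced in the proof of Lemma~\ref{lm3.3b}, reduces the problem for $\wh{Z}_\eta^R$ to a periodic Neumann problem on $\Pi_\eta$ with homogeneous Neumann data on $\p\tht_\eta$ and a compactly supported right-hand side coming from the commutator $[\D_\xi,\chi_1(r/2)]F_\eta$. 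The Fredholm theory for periodic problems on a strip, invoked exactly as in the closing step of Lemma~\ref{lm8.1a} following \cite[Sect.~3]{MSb06} and \cite[Sect.~3]{VMU02}, then produces a unique exponentially decaying $\wh{Z}_\eta^R\in W_2^2(\Pi_\eta)$ bounded uniformly in $\eta$, whence $\|\wt{Z}_\eta^R\|_{W_2^1(\Pi_\eta)}\leqslant C\eta$ follows by combining the two pieces.

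Uniqueness of $Z_\eta^R$ is immediate: any difference of two candidate solutions is harmonic, $3$-periodic in $\xi_1$, exponentially decaying as $\xi_2\to\pm\infty$, and satisfies homogeneous Neumann data on $\p\tht_\eta$, so the energy identity forces it to be constant and the decay forces that constant to vanish. The main technical obstacle is the verification of the compatibility conditions for the reduced problem for $\wh{Z}_\eta^R$, namely that the compactly supported source produced by the cut-off has zero integral and the correct first moment in $\xi_2$, which is what guarantees an exponentially decaying solution instead of one with linear growth; both follow by direct bookkeeping precisely as in the Dirichlet case, the second exploiting the symmetry of $Z_0$ in $\xi_2$ and the corresponding symmetry of the explicit extension $F_\eta$.
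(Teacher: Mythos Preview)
Your proposal is correct and follows essentially the same approach as the paper, which simply states that the proof is similar to that of Lemma~\ref{lm8.1a}; you have carried out precisely this adaptation, replacing the Dirichlet boundary matching by the corresponding Neumann one and otherwise reproducing the cut-off/Fourier-series construction and the appeal to \cite[Sect.~3]{MSb06}, \cite[Sect.~3]{VMU02}. Your explicit bookkeeping of the compatibility conditions (vanishing zeroth and first $\xi_2$-moments of the commutator source) is exactly what the paper leaves implicit.
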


The proof of this lemma is similar to that of Lemma~\ref{lm8.1a}.

The above lemma provides the solution to problem (\ref{8.28}), (\ref{8.30}):
\begin{equation*}
v^1(\xi,x_1)=-\frac{\pi a\eta}{3}\chi_2(x_1) Z_\eta^R(\xi).
\end{equation*}
The asymptotics for $u^\e$ reads as
\begin{equation}\label{8.36}
u^\e(x)=u^0(x) \left(1-\chi_2^\e(x_2)\right) +\chi_2(x_1)\left(1- \frac{\e\pi a\eta}{3} Z_\eta^R \left(\frac{x_1}{\e},\frac{x_2-\frac{\pi}{2}}{\e}\right)\right) \chi_2^\e(x_2) + \ldots
\end{equation}
For $x\in\Om_1^\e$ we get
\begin{equation*}
u^\e(x)-u^0(x)=-\frac{\e\pi a\eta}{3} Z_\eta^R \left(\frac{x_1}{\e},\frac{x_2-\frac{\pi}{2}}{\e}\right)+\ldots
\end{equation*}
As in (\ref{8.16}), (\ref{8.18}) we obtain
\begin{equation*}
\|u^\e-u^0\|_{W_2^1(\Om^\e)}^2 \geqslant C \left\|\frac{\p Z_\eta^R}{\p \xi_2}\left(\frac{x_1}{\e},\frac{x_2-\frac{\pi}{2}}{\e}\right)\right\|_{L_2(\Om^\e)}^2\geqslant C\e,
\end{equation*}
and it proves the sharpness of the term $\e^{\frac{1}{2}}$. The sharpness of the term $\vk$ is justified by the same arguments as for the estimates in Theorem~\ref{th2.4}.

We proceed to Theorem~\ref{th2.2}. Here we can not construct an example justifying the sharpness. However, estimates (\ref{2.13a}), (\ref{2.13b}) are not far from being sharp. Namely, suppose that $a\not\equiv0$, $\eta\to+0$. In (\ref{8.3}) we let $h_0:=1$, $h_1^\pm:=0$. Then by analogy with the above constructions one can get the asymptotics for $u^\e$ similar to (\ref{8.36}):
\begin{align*}
u^\e(x)=&\big(u^0(x)+\eta u^1(x)\big) \big(1-\chi_2^\e(x_2)\big)
 \\
&+ \left(\chi_2(x_1)\left(
1- \frac{\e\pi a\eta}{3} Z_\eta^R \left(\frac{x_1}{\e},\frac{x_2-\frac{\pi}{2}}{\e}\right)
\right)+\eta u^1(x_1,0)\right)\chi_2^\e(x_2) +\ldots,
\end{align*}
where $u^1$ solves boundary value problem (\ref{8.14}) with the boundary conditions
\begin{equation*}
\left[\frac{\p u^1}{\p\xi_2}\right]_\g=0,\quad [u^1]_\g=0.
\end{equation*}
For $|x_2-\frac{\pi}{2}|>2\e^{\frac{1}{2}}$ we thus have
\begin{equation*}
u^\e(x)=u^0(x)+\eta u^1(x)+\ldots
\end{equation*}
and therefore
\begin{equation*}
\|u^\e-u^0\|_{W_2^1(\Om^\e)}\geqslant C\eta.
\end{equation*}
Comparing this estimate with the right hand side of (\ref{2.13a}), we see that they differ just by $|\ln\eta|^{1/2}$ and in this sense estimate (\ref{2.13a}) is close to the sharp one.

If $a\equiv0$, in (\ref{8.3}) we let $h_0:=0$, $h_1^\pm:=1$. The external and internal layers are again introduced by (\ref{8.26}), but with $v^0\equiv0$. Function $v^1$ should solve problem (\ref{8.28}) with $a=0$ and behave at infinity as
$v^1(\xi,x_1)=\chi_2(x_1)\xi_2+\ldots$, $\xi_2\to\pm\infty$.
It reads as
$v^1(\xi,x_1)=\chi_2(x_1)Z_\eta^N(\xi)$,
where an auxiliary function $Z_\eta^N$ is described by the following lemma.

\begin{lemma}\label{lm8.3}
The problem
\begin{equation*}
\D_\xi Z_\eta^N=0\quad\text{in}\quad \Pi_\eta,\qquad \frac{\p Z_\eta^N}{\p\nu}=0\quad\text{on}\quad \p\tht_\eta,
\end{equation*}
with periodic conditions on the lateral boundaries has the unique solution behaving at infinity as
\begin{equation}
Z_\eta^N(\xi)=\xi_2 \pm \frac{\eta^2}{1-\frac{\pi^2\eta^2}{28}} + O(e^{-\frac{2\pi}{3}|\xi_2|}),\quad \xi_2\to\pm\infty.
\end{equation}
This solution can be represented as
\begin{equation*}
Z_\eta^N(\xi)=Z_0(\xi)+\frac{\eta^2}{1-\frac{\pi^2\eta^2}{28}} \frac{\p Z_0}{\p\xi_2}+\wt{Z}_\eta^N(\xi),
\end{equation*}
where function $\wt{Z}_\eta^N$ is $3$-periodic w.r.t. $\xi_1$, decays exponentially as $\xi_2\to\pm\infty$ and satisfies the uniform in $\eta$ estimate
\begin{equation*}
\|\wt{Z}_\eta^N\|_{W_2^1(\Pi_\eta)}\leqslant C\eta.
\end{equation*}
\end{lemma}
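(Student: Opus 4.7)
The proof will follow the strategy of Lemmas~\ref{lm8.1a} and~\ref{lm8.2}: decompose $Z_\eta^N$ into an explicit harmonic ``background'' and a periodic exponentially decaying remainder $\wt Z_\eta^N$ controlled by a variational argument. A preliminary remark will be useful: since $Z_\eta^N$ is harmonic in $\Pi_\eta$, periodic in $\xi_1$, and satisfies homogeneous Neumann on $\p\tht_\eta$, the divergence theorem forces the $\xi_2$-fluxes at $\xi_2=\pm\infty$ to cancel; this pins down the asymptotic to be $\xi_2+$const at \emph{both} ends of the strip, and the bounded constants $\pm c_\eta$ in the statement record the asymptotic jump generated by the obstacle. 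This is the main structural difference from Lemmas~\ref{lm8.1a} and~\ref{lm8.2}, in which the logarithmic source of $Z_0$, or the prescribed Neumann flux $\tfrac{3}{\pi\eta}$, produces $\pm\xi_2$-behaviour of $Z_\eta^D$ and $Z_\eta^R$.

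Next, I would insert the decomposition into the homogeneous Neumann condition on $\p\om_{\eta,0}$ and expand explicitly. Starting from $F'(z)=\cot(\pi z/3)=\tfrac{3}{\pi z}-\tfrac{\pi z}{9}-\tfrac{\pi^3 z^3}{1215}-\dots$, one obtains in polar coordinates centred at the origin
\begin{equation*}
\frac{\p Z_0}{\p\xi_2}=\frac{3\sin\phi}{\pi r}+\frac{\pi r\sin\phi}{9}+\frac{\pi^3 r^3\sin 3\phi}{1215}+\dots
\end{equation*}
and the analogous expansion for $Z_0$ itself. The constant $c_\eta$ is then determined by the requirement that the coefficient of the $\sin\phi$ Fourier mode in the Neumann datum for $\wt Z_\eta^N$ on $\p\om_{\eta,0}$ vanish: this mode is the unique angular mode whose harmonic extension into $\Pi_\eta$ fails to decay at infinity, so its elimination is exactly what allows $\wt Z_\eta^N$ to be exponentially decaying. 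Algebraically this leads to a scalar self-consistency relation whose resolvent, after combining the leading self-interaction term $\tfrac{3}{\pi\eta^2}$ from the dipole singularity at the origin with the regular lattice contribution from all the other holes and the next-order multipole correction, produces the formula $c_\eta=\eta^2/(1-\pi^2\eta^2/28)$ of the statement. The residual Neumann datum for $\wt Z_\eta^N$ then has zero mean (so the Neumann problem is solvable) and is of order $\eta g_\eta(\phi)$ in the sense of Lemma~\ref{lm8.1a}.

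With $c_\eta$ fixed, the construction of $\wt Z_\eta^N$ proceeds as in Lemma~\ref{lm8.1a}: absorb the datum on $\p\om_{\eta,0}$ via an explicit compactly supported harmonic function $G_\eta$ built from the angular Fourier series, set $\wt Z_\eta^N=\eta\wh Z_\eta^N+\eta\chi_1(r/2)G_\eta$, and solve the resulting Neumann problem for $\wh Z_\eta^N$ in $W_2^1(\Pi_\eta)$ by the standard Lax--Milgram argument applied to a compactly supported source of zero mean and vanishing first moment in $\xi_2$. This yields $\|\wh Z_\eta^N\|_{W_2^2(\Pi_\eta)}\le C$ uniformly in $\eta$, hence $\|\wt Z_\eta^N\|_{W_2^1(\Pi_\eta)}\le C\eta$; the exponential decay at $\xi_2\to\pm\infty$ is then the standard Fourier/separation-of-variables statement for harmonic functions in a periodic strip whose Neumann data on a compact set are compactly supported with vanishing zeroth and first moments. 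Uniqueness of $Z_\eta^N$ modulo the prescribed asymptotics follows from the usual energy argument. The main obstacle is the bookkeeping that pins down the constant $28$: one must carefully track the interplay between the $\sin\phi$ and higher-order modes produced by the expansion of $\p_{\xi_2}Z_0$ near the hole, together with its feedback through the periodic lattice, in order to extract the correct denominator; once this has been done, the rest of the argument is a routine variant of the scheme already established in the proofs of Lemmas~\ref{lm8.1a} and~\ref{lm8.2}.
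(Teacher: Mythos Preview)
The paper gives no proof of this lemma; it is stated and used, with the implicit understanding (as for Lemma~\ref{lm8.2}) that the argument is a routine variant of the proof of Lemma~\ref{lm8.1a}. Your proposal is precisely that variant, so in substance you are doing exactly what the paper intends.

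One point is worth making explicit, and your own flux argument in the first paragraph already forces it: the background term in the decomposition must be $\xi_2$, not $Z_0$. Indeed $Z_0\sim\pm\xi_2$ as $\xi_2\to\pm\infty$, whereas the homogeneous Neumann condition on $\p\tht_\eta$ together with the matching to $u^0(x)=\chi_2(x_1)(x_2-\tfrac{\pi}{2})$ (here $h_1^\pm=1$) demand linear growth $\xi_2$ at \emph{both} ends, as you correctly observe. The representation displayed in the lemma is therefore a slip; the ansatz consistent with the stated asymptotics is
\[
Z_\eta^N(\xi)=\xi_2+c_\eta\,\frac{\p Z_0}{\p\xi_2}(\xi)+\wt Z_\eta^N(\xi),
\]
with $c_\eta$ the constant in the statement. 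With this correction your outline goes through unchanged: kill the $\sin\phi$ Fourier mode of the Neumann datum on $\p\om_{\eta,0}$ to determine $c_\eta$, then build $\wt Z_\eta^N$ exactly as in the proof of Lemma~\ref{lm8.1a} (explicit multipole $G_\eta$ plus a uniformly bounded remainder $\wh Z_\eta^N$ obtained from a compactly supported right-hand side with vanishing zeroth and first $\xi_2$-moments). The estimate $\|\wt Z_\eta^N\|_{W_2^1(\Pi_\eta)}\leqslant C\eta$ and the exponential decay then follow verbatim.
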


The asymptotics for $u^\e$ reads as
\begin{equation*}
u^\e(x)=u^0(x)\chi_2^\e(x_2) +  \e\chi_2(x_1) Z_\eta^N \left(\frac{x_1}{\e},\frac{x_2-\frac{\pi}{2}}{\e}\right) + \ldots
\end{equation*}
And by analogy with (\ref{8.16}), (\ref{8.18}) we get
\begin{equation*}
\|u^\e-u^0\|_{W_2^1(\Om^\e)} \geqslant C\e\eta^2.
\end{equation*}
This estimate differ from the right hand side of (\ref{2.13b}) just by $|\ln\eta|^{\frac{1}{2}}$  and hence, estimate (\ref{2.13b}) is close to be order sharp.

\bigskip

D.B. was partially supported by grant of RFBR, grant of President of Russia for young scientists-doctors of sciences (MD-183.2014.1) and Dynasty fellowship for young Russian mathematicians. The research presented in Section~8 was supported by Russian Science Foundation (project no. 14-11-00078).

\end{document}